\documentclass{amsart}

\usepackage{amsmath, amsthm, amssymb, xypic, setspace}
\usepackage{hyperref}
\usepackage[usenames, dvipsnames]{color}

\hypersetup{
    colorlinks = true,
    linkcolor = RoyalBlue,
    citecolor = Cerulean
}

\numberwithin{equation}{section}

\theoremstyle{definition}
\newtheorem{definition}{Definition}[section]

\newtheorem{example}[definition]{Example}
\newtheorem{remark}[definition]{Remark}

\theoremstyle{theorem}
\newtheorem{proposition}[definition]{Proposition}
\newtheorem{theorem}[definition]{Theorem}

\newtheorem{lemma}[definition]{Lemma}

\linespread{1.25}

\newcommand{\norm}[1]{\left\lVert#1\right\rVert}
\newcommand{\ddb}{\partial\bar{\partial}}

\title{On Yau's theorem for effective orbifolds}
\author{Mitchell Faulk}
\address{Department of Mathematics, Columbia University, New York, NY}
\email{faulk@math.columbia.edu}

\begin{document}

\maketitle

\begin{abstract}
In 1978, Yau \cite{yau78} confirmed a conjecture due to Calabi \cite{calabi54} stating the existence of K\"ahler metrics with prescribed Ricci forms on compact K\"ahler manifolds. A version of this statement for effective orbifolds can be found in the literature \cite{joyce00, bg08, dk01}. In this expository article, we provide details for a proof of this orbifold version of the statement by adapting Yau's original continuity method to the setting of effective orbifolds in order to solve a Monge-Amp\`ere equation. We then outline how to obtain K\"ahler-Einstein metrics on orbifolds with $c_1(\mathcal{X}) < 0$ by solving a slightly different Monge-Amp\`ere equation. We conclude by listing some explicit examples of Calabi-Yau orbifolds, which consequently admit Ricci flat metrics by Yau's theorem for effective orbifolds. 
\end{abstract}

\medskip

\noindent \textbf{Keywords.} Effective orbifold, Yau's theorem, Monge-Amp\'ere equation, K\"ahler-Einstein metric, Ricci-flat metric

\tableofcontents

\section{Introduction}

The notion of effective orbifold (or equivalently $V$-manifold) is a generalization of the notion of manifold whereby the local charts are required to be homeomorphic to a quotient $U/G$ of an open subset $U$ of $\mathbb{R}^n$ by a finite group $G$. If the defining data of an orbifold is holomorphic, then the orbifold is called complex. 

Many results from complex geometry, such as the Hodge decomposition theorem for K\"ahler manifolds \cite{baily56} and Kodaira's embedding theorem \cite{baily57}, have been shown to extend from the setting of manifolds to the setting of effective orbifolds. In addition, it is stated in the literature \cite[Theorem 6.56]{joyce00} \cite[Theorem 4.4.31]{bg08} \cite[Technical Setting 6.2]{dk01} that one such result, namely, Yau's solution \cite{yau78} to Calabi's conjecture \cite{calabi54}, extends as well. Our purpose is to provide a self-contained proof of this result, which we state precisely below. 
 
\begin{theorem}\label{thm:main}
Let $(\mathcal{X}, \omega)$ be a compact K\"ahler effective orbifold of complex dimension $n$, and let $F$ be a smooth function on $\mathcal{X}$ satisfying 
\[
\int_\mathcal{X} e^F \omega^n = \int_{\mathcal{X}} \omega^n.
\]
Then there is a smooth function $\varphi$ on $\mathcal{X}$, unique up to the addition of a constant, satisfying 
\begin{align}\label{eqn:MA}
\begin{cases}
(\omega + \sqrt{-1}\ddb \varphi)^n = e^F \omega^n  \\
\omega + \sqrt{-1} \ddb \varphi \; \textnormal{is a positive form}
\end{cases}. 
\end{align}
\end{theorem}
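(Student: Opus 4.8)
The plan is to run Yau's continuity method, carried out equivariantly on local uniformizing charts so that every step descends to the orbifold $\mathcal{X}$. First I would dispose of uniqueness, which is the easy half. Given two solutions $\varphi_1, \varphi_2$, I would factor the difference as
\[
\omega_{\varphi_1}^n - \omega_{\varphi_2}^n = \sqrt{-1}\ddb(\varphi_1 - \varphi_2) \wedge \eta, \qquad \eta = \sum_{j=0}^{n-1} \omega_{\varphi_1}^j \wedge \omega_{\varphi_2}^{n-1-j},
\]
where $\omega_{\varphi_i} = \omega + \sqrt{-1}\ddb\varphi_i$ and $\eta$ is a positive $(n-1,n-1)$-form. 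Since both sides of \eqref{eqn:MA} equal $e^F\omega^n$, the left-hand side vanishes; pairing with $\varphi_1 - \varphi_2$, integrating over $\mathcal{X}$, and applying Stokes' theorem for orbifolds produces $-\int_\mathcal{X} \sqrt{-1}\,\partial(\varphi_1 - \varphi_2) \wedge \bar{\partial}(\varphi_1 - \varphi_2) \wedge \eta = 0$, which forces $\varphi_1 - \varphi_2$ to be constant.

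For existence, I would consider the one-parameter family
\[
(\omega + \sqrt{-1}\ddb\varphi)^n = e^{F_t}\omega^n, \qquad F_t = tF + c_t,
\]
where $c_t$ is the unique constant making the normalization $\int_\mathcal{X} e^{F_t}\omega^n = \int_\mathcal{X}\omega^n$ hold, and set $S = \{\, t \in [0,1] : \textnormal{a solution } \varphi_t \textnormal{ exists}\,\}$. At $t = 0$ the function $\varphi = 0$ solves the equation, so $S$ is nonempty, and it suffices to show $S$ is both open and closed in $[0,1]$. Openness follows from the implicit function theorem on the Hölder spaces $C^{k,\alpha}(\mathcal{X})$: the linearization of the Monge-Amp\`ere operator at a solution $\varphi_t$ is the Laplacian $\Delta_{\omega_{\varphi_t}}$ of the metric $\omega_{\varphi_t}$, a self-adjoint elliptic operator restricting to an isomorphism between the subspaces of functions orthogonal to the constants. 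Ellipticity and invertibility are checked on the uniformizing charts for the $G$-invariant lift.

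Closedness is where the real work lies, and it is the step I expect to be the main obstacle. I would normalize by $\sup_\mathcal{X} \varphi_t = 0$ and establish uniform a priori estimates independent of $t$, in the classical order: a zeroth-order bound by Moser iteration using the Sobolev inequality for the compact orbifold $\mathcal{X}$; a second-order bound by applying the maximum principle to a quantity of the form $\log(n + \Delta\varphi_t) - A\varphi_t$ (the Aubin-Yau computation) for a suitable constant $A$, which yields a two-sided bound on the trace $\Delta\varphi_t$ and hence uniform equivalence of $\omega_{\varphi_t}$ with $\omega$; and finally a $C^{2,\alpha}$ bound from the now uniformly elliptic equation, either by Calabi's third-order estimate or by Evans-Krylov theory, after which a Schauder bootstrap yields $C^{k,\alpha}$ bounds for all $k$. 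With these uniform estimates in hand, Arz\`ela-Ascoli extracts a subsequence $\varphi_{t_i} \to \varphi_\infty$ converging in $C^{k,\alpha}$ with $t_i \to t_\infty$, and $\varphi_\infty$ solves the equation at $t_\infty$, so $S$ is closed; connectedness of $[0,1]$ then gives $1 \in S$, which is the desired solution of \eqref{eqn:MA}.

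The orbifold content throughout is uniform and, I expect, routine once set up carefully: every maximum-principle argument is applied to the $G$-invariant lift of $\varphi_t$ on a local uniformizing chart $U \subset \mathbb{C}^n$, where the classical pointwise computations are valid and the location of an extremum on $\mathcal{X}$ pulls back to a genuine extremum of the lift; and every integral estimate relies on the orbifold analogues of Stokes' theorem together with the Sobolev and Poincar\'e inequalities on the compact orbifold, obtained by patching chartwise estimates with a partition of unity subordinate to an orbifold atlas. The delicate point to verify is that the singular locus, being of real codimension at least two, contributes no boundary terms in the integrations by parts and does not obstruct the elliptic regularity theory applied equivariantly on charts.
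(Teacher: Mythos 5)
Your proposal follows essentially the same route as the paper: uniqueness via the same factorization and integration by parts, existence via the continuity method with openness from the implicit function theorem applied to the linearized Laplacian on mean-zero H\"older spaces, and closedness from the standard $C^0$ (Moser iteration), $C^2$ (Aubin--Yau maximum principle), and third-order/Calabi estimates followed by bootstrapping, all carried out on equivariant lifts in uniformizing charts. The only notable (and welcome) refinement is your inclusion of the normalizing constant $c_t$ in the continuity path, which the paper omits.
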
 

It is easily shown that Theorem \ref{thm:main} implies---or, more precisely, is equivalent to---the following, which is the analogous extension of the original Calabi conjecture \cite{calabi54} to the setting of orbifolds.  

\begin{theorem}\label{thm:calabi}
Let $(\mathcal{X}, \omega)$ be a compact K\"ahler effective orbifold, and let $R$ be a $(1,1)$-form representing the cohomology class $2\pi c_1(\mathcal{X}) \in H^2(\mathcal{X}, \mathbb{R})$. Then there is a unique K\"ahler form $\omega'$ on $\mathcal{X}$ such that
\begin{enumerate}
\item[(i)] $\omega'$ and $\omega$ represent the same cohomology class and
\item[(ii)] the Ricci form of $\omega'$ is $R$. 
\end{enumerate}
\end{theorem}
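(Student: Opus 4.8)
The plan is to deduce Theorem~\ref{thm:calabi} directly from Theorem~\ref{thm:main}, reducing the prescribed-Ricci-curvature problem to the Monge--Amp\`ere equation \eqref{eqn:MA}. The starting observation is that the Ricci form $\mathrm{Ric}(\omega)$ of the background metric already represents $2\pi c_1(\mathcal{X})$, so $R-\mathrm{Ric}(\omega)$ is a real $(1,1)$-form that is $d$-exact. Appealing to the $\ddb$-lemma for compact K\"ahler effective orbifolds---a consequence of the orbifold Hodge theory of Baily \cite{baily56}---I would write $R-\mathrm{Ric}(\omega)=\sqrt{-1}\,\ddb f$ for a real smooth function $f$ on $\mathcal{X}$.

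For existence, I would first fix the constant $c$ so that $F:=c-f$ meets the compatibility hypothesis of Theorem~\ref{thm:main}, namely $\int_\mathcal{X} e^F\omega^n=\int_\mathcal{X}\omega^n$; this amounts to the single choice $e^c=\int_\mathcal{X}\omega^n\big/\int_\mathcal{X} e^{-f}\omega^n$. Theorem~\ref{thm:main} then furnishes a smooth $\varphi$ for which $\omega':=\omega+\sqrt{-1}\,\ddb\varphi$ is positive and satisfies $(\omega')^n=e^F\omega^n$. Since $\sqrt{-1}\,\ddb\varphi$ is $d$-closed, $\omega'$ is a K\"ahler form cohomologous to $\omega$, which is condition (i). For condition (ii), I would use the chartwise expression $\mathrm{Ric}(\omega)=-\sqrt{-1}\,\ddb\log\det g$ to derive the transformation law
\[
\mathrm{Ric}(\omega')=\mathrm{Ric}(\omega)-\sqrt{-1}\,\ddb\log\frac{(\omega')^n}{\omega^n},
\]
in which the ratio of volume forms is a globally defined positive function. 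Substituting $(\omega')^n/\omega^n=e^F$ and using $\sqrt{-1}\,\ddb F=-\sqrt{-1}\,\ddb f=\mathrm{Ric}(\omega)-R$ gives $\mathrm{Ric}(\omega')=R$.

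For uniqueness, suppose $\omega'_1$ and $\omega'_2$ both satisfy (i) and (ii). The $\ddb$-lemma lets me write $\omega'_2=\omega'_1+\sqrt{-1}\,\ddb\psi$, and equality of the two Ricci forms forces $\log\big((\omega'_2)^n/(\omega'_1)^n\big)$ to be pluriharmonic, hence constant on the compact connected orbifold. Comparing the integrals $\int_\mathcal{X}(\omega'_i)^n$, which coincide because $[\omega'_1]=[\omega'_2]$, pins this constant to zero, so $(\omega'_2)^n=(\omega'_1)^n$. The uniqueness clause of Theorem~\ref{thm:main}, applied with background form $\omega'_1$ and $F=0$, then forces $\psi$ to be constant, so $\omega'_1=\omega'_2$.

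Given Theorem~\ref{thm:main}, the argument is essentially formal, so the genuine work is not analytic but consists in checking that its two ingredients descend to effective orbifolds: the $\ddb$-lemma, which rests on orbifold Hodge theory, and the Ricci-form calculus, which requires that $\mathrm{Ric}(\omega)$ be a well-defined global form representing $2\pi c_1(\mathcal{X})$ and that the pointwise ratio of volume forms descend to $\mathcal{X}$. I expect these descent verifications to be the main point requiring care. Finally, the converse implication---and hence the equivalence asserted in the text---follows by the same dictionary: prescribing $R=\mathrm{Ric}(\omega)-\sqrt{-1}\,\ddb F$ in Theorem~\ref{thm:calabi} and reversing the computation above recovers a solution of \eqref{eqn:MA}.
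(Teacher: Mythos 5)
Your proposal is correct and follows essentially the same route as the paper: use the orbifold $\ddb$-lemma to write $R-\mathrm{Ric}(\omega)$ as $\sqrt{-1}\,\ddb$ of a potential, normalize by an additive constant so the integral compatibility condition of Theorem \ref{thm:main} holds, solve the Monge--Amp\`ere equation, and verify the Ricci form via the transformation law $\mathrm{Ric}(\omega')=\mathrm{Ric}(\omega)-\sqrt{-1}\,\ddb\log\bigl((\omega')^n/\omega^n\bigr)$; uniqueness likewise reduces, exactly as in the paper, to showing the pluriharmonic ratio of volume forms is a constant fixed to zero by integrating over $\mathcal{X}$, and then invoking the uniqueness clause of Theorem \ref{thm:main}.
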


In particular, if $c_1(\mathcal{X}) = 0$ as a cohomology class in $H^2(\mathcal{X}, \mathbb{R})$, then there is a unique Ricci flat K\"ahler form on $\mathcal{X}$ (c.f. \cite[Theorem 1.3]{campana04}). 

If one instead solves a slightly modified Monge-Amp\`ere equation from the one above, namely 
\begin{align}\label{eqn:MA2}
\begin{cases}
(\omega + \sqrt{-1} \ddb \varphi)^n = e^{F + \varphi}\omega^n \\
\omega + \sqrt{-1} \ddb \varphi \; \text{is a positive form}
\end{cases},
\end{align}
on a compact K\"ahler orbifold whose first Chern class $c_1(\mathcal{X})$ is negative with K\"ahler form $\omega$ representing  $-2\pi c_1(\mathcal{X})$, then one obtains a K\"ahler-Einstein metric $\omega_\varphi = \omega + \sqrt{-1} \ddb \varphi$ satisfying $\text{Ric}(\omega_\varphi) = - \omega_\varphi$. Concisely, we have the following additional result concerning the existence of K\"ahler-Einstein metrics, due to Yau \cite{yau78} and Aubin \cite{a78} in the setting of manifolds. 

\begin{theorem}\label{thm:KE}
If $\mathcal{X}$ is a compact K\"ahler effective orbifold satisfying $c_1(\mathcal{X}) < 0$, then there is a K\"ahler metric $\omega \in -2\pi c_1(\mathcal{X})$ on $\mathcal{X}$ satisfying 
\[
\textnormal{Ric}(\omega) = -\omega. 
\]
\end{theorem}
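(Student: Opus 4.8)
The plan is to carry out exactly the reduction sketched before the statement: convert the Einstein condition into the Monge--Amp\`ere equation \eqref{eqn:MA2} and then solve the latter by the continuity method, reusing the a priori estimates established for Theorem \ref{thm:main}. First I would fix a reference K\"ahler form $\omega \in -2\pi c_1(\mathcal{X})$, which exists precisely because $c_1(\mathcal{X}) < 0$. Since $\mathrm{Ric}(\omega)$ represents $2\pi c_1(\mathcal{X}) = -[\omega]$, the real $(1,1)$-form $\mathrm{Ric}(\omega) + \omega$ is cohomologically trivial, so the orbifold $\ddb$-lemma (a consequence of the Hodge theory of \cite{baily56}) furnishes a smooth $F$ with $\mathrm{Ric}(\omega) + \omega = \sqrt{-1}\ddb F$. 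Writing $\omega_\varphi = \omega + \sqrt{-1}\ddb\varphi$ and using the transformation law $\mathrm{Ric}(\omega_\varphi) = \mathrm{Ric}(\omega) - \sqrt{-1}\ddb \log(\omega_\varphi^n/\omega^n)$, a short computation shows that $\mathrm{Ric}(\omega_\varphi) = -\omega_\varphi$ holds as soon as $\log(\omega_\varphi^n/\omega^n) - \varphi - F$ is constant; absorbing this constant into $F$ reduces the problem to solving \eqref{eqn:MA2}. This step is purely formal and transfers verbatim from the manifold case.

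Next I would set up the continuity method for \eqref{eqn:MA2} through the family $(\omega + \sqrt{-1}\ddb\varphi_t)^n = e^{tF + \varphi_t}\omega^n$, $t \in [0,1]$, noting that $\varphi_0 = 0$ solves the equation at $t = 0$. Openness of the set of solvable parameters follows from the implicit function theorem on the orbifold H\"older spaces: the linearization of $\varphi \mapsto \log(\omega_\varphi^n/\omega^n) - \varphi$ is $\Delta_{\omega_\varphi} - 1$, and the favorable sign of the zeroth-order term forces a trivial kernel by the maximum principle, hence invertibility. This is the structural advantage of the case $c_1(\mathcal{X}) < 0$: unlike the Calabi--Yau equation \eqref{eqn:MA}, no normalizing constant is required to make the linearized operator invertible.

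The hard part will be closedness, which depends on uniform a priori estimates. Pleasantly, the zeroth-order estimate---the most delicate step in the proof of Theorem \ref{thm:main}---is here immediate: at a maximum of $\varphi_t$ the form $\sqrt{-1}\ddb\varphi_t$ is nonpositive, so $\omega_{\varphi_t}^n \le \omega^n$ and hence $tF + \varphi_t \le 0$ there, and a symmetric argument at a minimum yields the clean bound $\norm{\varphi_t}_{C^0} \le \norm{F}_{C^0}$. The second-order (Laplacian) estimate then runs as in the proof of Theorem \ref{thm:main}, the only new feature being the extra summand $\varphi_t$ on the right-hand side, whose contribution is lower order and already controlled by the $C^0$ bound; the higher-order estimates follow from the interior Evans--Krylov and Schauder theory applied in orbifold charts, exactly as before. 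Feeding the resulting $C^{2,\alpha}$ bounds back in closes the continuity method at $t = 1$ and produces a solution of \eqref{eqn:MA2}, hence the desired metric $\omega + \sqrt{-1}\ddb\varphi$; uniqueness, although not asserted in the statement, also follows from the maximum principle applied to the difference of two solutions.
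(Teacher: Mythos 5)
Your proposal is correct and follows essentially the same route as the paper: the same reduction via the $\ddb$-lemma to the Monge--Amp\`ere equation \eqref{eqn:MA2}, the same continuity family $(*_s)$ with linearization $\Delta_s - 1$ invertible thanks to the sign of the zeroth-order term, and the same observation that the $C^0$-estimate becomes an easy maximum-principle argument while the higher-order estimates carry over from the proof of Theorem \ref{thm:main} with only minor modifications.
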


Most of the content of this paper is far from original; instead well-known tools from the nonsingular setting are adapted to the setting of effective orbifolds. More precisely, our proof follows closely a streamlined approach to the continuity method of proof in the nonsingular setting, more details of which can be found in \cite{szekelyhidi14}, which compiles expositions due to many other sources \cite{siu12, tian12, blocki12, yau78}. We simply gather---and establish when necessary---appropriate orbifold-version ingredients of this approach, and then we outline how these ingredients can be used to construct a proof. The main ingredients include Kodaira's $\ddb$-lemma on orbifolds \cite{baily56}, properties of elliptic operators on orbifolds (see Section \ref{sec:elliptic}), a Green's function of the complex Laplacian, and inequalities of Poincar\'e and Sobolev type. We remark that other methods of proof (such as variational methods) exist in the nonsingular setting, and these approaches should have analogous extensions to the setting of orbifolds, provided the necessary ingredients can be extended to this setting as well. 

In Section \ref{sec:prelim}, we recall some basic facts of effective orbifolds. In particular, we review notions of tensors and differential forms on these objects, and we demonstrate a version of Stokes' theorem (c.f. \cite{baily56}). We also review some facts about K\"ahler orbifolds. 

Elliptic operators on orbifolds are studied in Section \ref{sec:elliptic}. We obtain a priori estimates for such operators, recall a construction of a Green's function of the Laplacian following \cite{chiang90}, and obtain inequalities of Poincar\'e and Sobolev type. 

Section \ref{sec:main} begins the proof of the main theorem. The continuity method is outlined, and the desired openness properties are established using the mapping properties of elliptic operators established in previous sections. 

We reserve Section \ref{sec:est} for the a priori estimates necessary for the closedness property of the continuity method. We outline how the estimates demonstrated in previous sections (e.g. Poincar\'e inequality and Sobolev inequality) can be used to establish $C^0$-, $C^2$-, and $C^3$-estimates on solutions to the desired equation. 

In Section \ref{sec:KE} we outline briefly how to obtain K\"ahler-Einstein metrics on orbifolds with negative first Chern class. 

Section \ref{sec:rf} gives examples of Ricci-flat metrics on orbifolds with $c_1(\mathcal{X}) = 0$ as a cohomology class in $H^2(\mathcal{X}, \mathbb{R})$, which are guaranteed by Theorem \ref{thm:calabi}.

\section{Orbifold preliminaries}\label{sec:prelim}

The goal of this section is to recall the notion of a classical effective orbifold and to review some differential geometric tools associated to these objects.

We follow closely the notation and terminology of \cite{alr07} for classical effective orbifolds. Let $X$ be a topological space. 
\begin{itemize}
\item An {orbifold chart} for $X$ consists of a triple $(\widetilde{U}, G, \pi)$ where $\widetilde{U}$ is an open connected subset of $\mathbb{R}^n$, $G$ is a finite group of smooth automorphisms of $\widetilde{U}$, and $\pi : \widetilde{U} \to X$ is a continuous map which is invariant under the action of $G$ and which induces a homeomorphism of $\widetilde{U}/G$ onto an open subset $U \subset X$, called the support of the chart $(\widetilde{U}, G, \pi)$.  
\item An {embedding} $\lambda : (\widetilde{U}, G, \pi) \hookrightarrow (\widetilde{V}, H, \rho)$ of one chart into another consists of a smooth embedding $\lambda : \widetilde{U} \to \widetilde{V}$ such that $\rho \circ \lambda = \pi$. 
\item An {orbifold atlas} is a family $\mathcal{U}$ of orbifold charts whose supports cover $X$ and which are compatible in the sense that if $x$ is a point in the intersection $U \cap V$ of the supports of two charts $ (\widetilde{U}, G, \pi)$ and $(\widetilde{V}, H, \rho)$, then there is an open neighborhood $W \subset U \cap V$ of $x$ and a chart $(\widetilde{W}, K, \sigma)$ for $W$ such that there are embeddings $(\widetilde{W}, K, \sigma) \hookrightarrow (\widetilde{U}, G, \pi)$ and $(\widetilde{W}, K, \sigma) \hookrightarrow (\widetilde{V}, H, \rho)$.
\item An atlas $\mathcal{U}$ is said to {refine} an atlas $\mathcal{V}$ if for each chart in $\mathcal{U}$ there is an embedding into some chart of $\mathcal{V}$. Two atlases are called {equivalent} if they have a common refinement. 
\end{itemize}

By an {effective orbifold $\mathcal{X}$} of dimension $n$ we mean a paracompact Hausdorff space $X$ equipped with an equivalence class $[\mathcal{U}]$ of $n$-dimensional orbifold atlases. We call $X$ the underlying space of the effective orbifold $\mathcal{X}$. Because we only deal with effective orbifolds in what follows, we use the term orbifold to mean effective orbifold.  From this point forward, because $X$ is paracompact, we will assume that $X$ is covered by a locally finite collection of supports $U_\alpha$ of an atlas of charts $(\widetilde{U}_\alpha, G_\alpha, \pi_\alpha)$.  

We say that an orbifold $\mathcal{X}$ is orientable if all of the smooth automorphisms and embeddings of the charts in an atlas are orientation-preserving. From this point forward, we will assume that $\mathcal{X}$ is orientable and equipped with an orientation.

By a tensor of type $(r,s)$ on $\mathcal{X}$ we mean a collection of tensors  $T_\alpha$ of type $(r,s)$ on the charts $(\widetilde{U}_\alpha, G_\alpha, \pi_\alpha)$ with the following properties.  
\begin{itemize}
\item Each $T_\alpha$ is invariant with respect to $G_\alpha$ in the sense that if $g \in G_\alpha$, then $g^*T_\alpha = T_\alpha$.
\item The collection $T_\alpha$ is compatible with the transition maps in the sense that if $\lambda_{\alpha\beta} : (\widetilde{U}_\alpha, G_\alpha, \pi_\alpha) \hookrightarrow (\widetilde{U}_\beta, G_\beta, \pi_\beta)$ is an embedding from the atlas, then $\lambda_{\alpha\beta}^* T_\beta = T_\alpha$.
\end{itemize}

The notion of $k$-form is defined similarly to that of a tensor. The naturality of the de Rham differential ensures that there is a de Rham operator $d$ taking $k$-forms to $(k+1)$-forms. 

The support of a tensor $T$ is the subset of the underlying space $X$ given by  
\[
\text{supp}(T) := \bigcup_\alpha \pi_\alpha (\text{supp}(T_\alpha)).
\]
Given a locally finite collection $V_\beta$ of open sets covering $X$, by a partition of unity subordinate to $V_\beta$, we mean a collection $\varphi_\beta$ of smooth (in the orbifold sense) functions on $\mathcal{X}$ satisfying 
\begin{itemize}
\item $\sum_\beta \varphi_\beta = 1$ and 
\item $\text{supp}(\varphi_\beta) \subset V_\beta$. 
\end{itemize}
(Note that each $\varphi_\beta$ itself is given as a collection of smooth functions $\varphi_{\beta\alpha}$, one on each $\widetilde{U}_\alpha$.) One can show that such partitions of unity exist for any locally finite collection $V_\beta$ (see \cite{baily56}).

The integral of a compactly supported $n$-form $\omega$ can be defined in the following manner. If $\omega$ is an $n$-form whose support is contained within the support $U_\alpha$ of a chart $(\widetilde{U}_\alpha, G_\alpha, \pi_\alpha)$, then define 
\[
\int_{\mathcal{X}} \omega = \frac{1}{|G_\alpha|} \int_{\widetilde{U}_\alpha} \omega_\alpha
\] 
where $\omega_\alpha$ is a corresponding $n$-form on $\widetilde{U}_\alpha$. More generally, for an arbitrary $n$-form $\omega$ on $\mathcal{X}$, choose a partition of unity $\varphi_\alpha$ subordinate to the supports $U_\alpha$ and define 
\[
\int_\mathcal{X} \omega = \sum_\alpha \int_\mathcal{X} \varphi_\alpha \omega.
\]
One can show that this definition does not depend on the choice of atlas in the equivalence class nor on the choice of partition of unity.

Stokes' theorem extends to compact orbifolds (without boundary) in a natural way. Indeed let $\omega$ be any $(n-1)$-form on a compact orbifold $\mathcal{X}$. Choose a partition of unity $\varphi_\alpha$ subordinate to the supports $U_\alpha$ of the orbifold charts. If we let $\tilde{\omega}_\alpha$ denote a $G_\alpha$-invariant $n$-form on $\widetilde{U}_\alpha$ representing $\varphi_\alpha \omega$, then we find by definition that  
\[
\int_\mathcal{X} d(\varphi_\alpha \omega) = \frac{1}{|G_\alpha|} \int_{\widetilde{U}_\alpha} d(\tilde{\omega}_\alpha) = 0
\]
where the latter integral vanishes by the ordinary Stokes' theorem, as $\tilde{\omega}_\alpha$ has support which is compact and contained within $\widetilde{U}_\alpha$. It now follows that 
\begin{align*}
\int_\mathcal{X} d\omega &= \int_\mathcal{X} d\left(\sum_\alpha \varphi_\alpha \omega \right) \\
&= \sum_\alpha \int_\mathcal{X} d(\varphi_\alpha \omega) \\
&= 0,
\end{align*}
where the interchanging of the sum and the integral sign is valid because, for example, we may suppose that the number of charts is finite as $\mathcal{X}$ is compact. We summarize below. 

\begin{lemma}[Stokes' theorem]
Let $\omega$ be an $(n-1)$-form on a compact orbifold $\mathcal{X}$. Then 
\[
\int_{\mathcal{X}} d\omega = 0.
\]
\end{lemma}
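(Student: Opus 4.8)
The plan is to reduce the statement to the classical Stokes' theorem on Euclidean space by localizing with a partition of unity. Because $\mathcal{X}$ is compact, I may assume that it is covered by finitely many chart supports $U_\alpha$, and I fix a partition of unity $\{\varphi_\alpha\}$ subordinate to this cover, whose existence was recalled above. Writing $\omega = \sum_\alpha \varphi_\alpha \omega$ and invoking linearity of both $d$ and the orbifold integral, the problem splits as
\[
\int_{\mathcal{X}} d\omega = \sum_\alpha \int_{\mathcal{X}} d(\varphi_\alpha \omega),
\]
and it suffices to show that each summand vanishes. The point of this reduction is that $\varphi_\alpha \omega$ has support contained in the single chart support $U_\alpha$, so its integral is computed directly from the defining formula for integration of a form supported in one chart.

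For a fixed $\alpha$, let $\tilde{\omega}_\alpha$ denote the $G_\alpha$-invariant $(n-1)$-form on $\widetilde{U}_\alpha$ representing $\varphi_\alpha \omega$; since $\varphi_\alpha$ is compactly supported inside $U_\alpha$, the form $\tilde{\omega}_\alpha$ has compact support contained in $\widetilde{U}_\alpha \subset \mathbb{R}^n$. Using that the orbifold operator $d$ is represented chartwise by the ordinary exterior derivative, the local representative of $d(\varphi_\alpha \omega)$ is exactly $d\tilde{\omega}_\alpha$, and the definition of the orbifold integral then gives
\[
\int_{\mathcal{X}} d(\varphi_\alpha \omega) = \frac{1}{|G_\alpha|} \int_{\widetilde{U}_\alpha} d\tilde{\omega}_\alpha.
\]
The inner integral vanishes by the ordinary Stokes' theorem on $\mathbb{R}^n$, as $\tilde{\omega}_\alpha$ is compactly supported away from the boundary of $\widetilde{U}_\alpha$; the prefactor $1/|G_\alpha|$ is harmless since it multiplies zero. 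Summing over the finitely many $\alpha$ yields the claim.

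The only genuine subtleties, which I would make explicit rather than treat as routine, are twofold. First, one must know that $d$ is well defined on orbifold forms and is represented chartwise by the usual exterior derivative; this is precisely the naturality of the de Rham differential under the chart embeddings, ensuring that the collection $\{d\tilde{\omega}_\alpha\}$ is again a compatible, $G_\alpha$-invariant family and hence a bona fide orbifold form. Second, I would appeal to the fact, recorded above, that the orbifold integral is independent of the choice of atlas and partition of unity, so that the chart-by-chart computation indeed computes $\int_{\mathcal{X}} d\omega$. The interchange of the finite sum with the integral is justified by compactness, which guarantees only finitely many nonzero terms. I expect no real obstacle beyond bookkeeping, since the compact support of each $\tilde{\omega}_\alpha$ strictly inside its chart kills any boundary contribution.
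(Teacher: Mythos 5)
Your proof is correct and follows essentially the same route as the paper: localize with a partition of unity subordinate to the finitely many chart supports, apply the classical Stokes' theorem to the compactly supported $G_\alpha$-invariant representative $\tilde{\omega}_\alpha$ in each chart, and sum. The extra care you take in noting the chartwise compatibility of $d$ and the well-definedness of the integral is sound but does not change the argument.
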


By a connection on $\mathcal{X}$ we mean an $\mathbb{R}$-linear mapping $\nabla$ taking $(1,0)$-tensors (vector fields) to $(1,1)$-tensors satisfying Leibniz rule 
\[
\nabla (fV) = df \otimes V + f \cdot \nabla V
\] 
for any smooth function $f$ and any vector field $V$ on $\mathcal{X}$.  For two vector fields $V,W$, we let $\nabla_V W$ denote the contraction of $\nabla W$ with $V$.  A connection is called symmetric if for any pair of vector fields $V,W$ on $\mathcal{X}$, we have $\nabla_VW - \nabla_WV = [V,W]$, where $[V,W]$ is the Lie bracket defined as the vector field satisfying $[V,W](f) = V(W(f)) - W(V(f))$ for any smooth function $f$ on $\mathcal{X}$. 
 
Any connection can be extended to a mapping from $(r,s)$-tensors to $(r,s+1)$ tensors by demanding that the extension be compatible with contraction and with Leibniz rule. In particular, for a Riemannian metric $g$ on $\mathcal{X}$, one finds that 
\[
d(g(V,W)) = (\nabla g)(V,W) + g(\nabla V, W) + g(V, \nabla W)
\]
for vector fields $V, W$.  A connection $\nabla$ is said to be compatible with the metric $g$ if $\nabla g = 0$. For a given metric $g$, there is a unique symmetric connection compatible with it, which we call the Levi-Civita connection and which we will denote by $\nabla$.

A Riemannian metric $g$ provides an identification of the space of $(1,0)$-tensors with the space of $(0,1)$-tensors. Thus for any two 1-forms $\eta$ and $\zeta$, the metric $g$ determines a smooth function $g(\eta, \zeta)$. More generally, for any pair of tensors (resp. forms) $S,T$ of the same type (resp. degree), we let $g(S,T)$ denote the smooth function so determined. In particular, we write $|S|_g^2$ for the smooth function 
\[
|S|_g^2 = g(S,S).
\]

A Riemannian metric $g$ also determines a unique $n$-form $dV$ called the volume form compatible with the orientation and the metric. In local coordinates, the volume form admits the expression 
\[
dV_\alpha = \sqrt{\det(g_\alpha)} \: dx_\alpha^1 \wedge \cdots \wedge dx_\alpha^n. 
\] 

An orbifold $\mathcal{X}$ is called complex if the charts can be taken to be subsets of $\mathbb{C}^n$ and the other defining data can be taken to be holomorphic. For such an orbifold, we have a well-defined almost complex structure $J$, that is, a mapping of vector fields to vector fields satisfying $J^2 = - \text{id}$, which can be described locally in the usual way. The complexification of the space of vector fields decomposes into the eigenspaces for $J$ corresponding to $\pm \sqrt{-1}$. Dually the complexification of the space of $1$-forms decomposes and a $1$-form corresponding to the eigenvalue $\sqrt{-1}$ (resp. $-\sqrt{-1}$) is called a $(1,0)$-form (resp. $(0,1)$-form). Taking higher exterior powers one obtains the notion of a $(p,q)$-form on $\mathcal{X}$. The extension of the de Rham operator to the complexified spaces decomposes as $d = \partial + \bar{\partial}$ where $\partial$ is an operator taking $(p, q)$-forms to $(p-1,q)$ forms and $\bar{\partial}$ an operator taking $(p,q)$-forms to $(p,q-1)$-forms. The relation $d^2 = 0$ implies that $\partial^2 = \bar{\partial}^2 = 0$. The following lemma is routine and follows from Stokes' theorem. 

\begin{lemma}[Integration by parts]
Let $T$ be a $d$-closed $(n-1, n-1)$ form and let $\varphi$ be a smooth function on a compact complex orbifold $\mathcal{X}$. Then 
\[
\int_\mathcal{X} \varphi \cdot  \ddb \varphi \wedge T = - \int_{\mathcal{X}} \partial \varphi \wedge \bar{\partial} \varphi \wedge T.
\]
\end{lemma}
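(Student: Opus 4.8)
The plan is to reduce the identity to the orbifold Stokes' theorem just established by exhibiting the left-hand side minus the right-hand side as the integral of an exact form. First I would introduce the auxiliary form $\eta := \varphi \cdot \bar{\partial}\varphi \wedge T$, which has bidegree $(n-1, n)$ and hence total degree $2n-1$. The point of this choice is that its exterior derivative is a top-degree form, whose integral Stokes' theorem forces to vanish, and expanding that derivative via the Leibniz rule will produce exactly the two terms appearing in the statement.

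Next I would observe that on a complex orbifold of complex dimension $n$ there are no nonzero forms of bidegree $(n-1, n+1)$, so $\bar{\partial}\eta = 0$ and therefore $d\eta = \partial\eta$. Computing $\partial\eta$ with the Leibniz rule gives $\partial\eta = \partial\varphi \wedge \bar{\partial}\varphi \wedge T + \varphi \cdot \partial(\bar{\partial}\varphi \wedge T)$, and a second application of Leibniz, together with the sign $(-1)^{\deg \bar{\partial}\varphi} = -1$, yields $\partial(\bar{\partial}\varphi \wedge T) = \ddb\varphi \wedge T - \bar{\partial}\varphi \wedge \partial T$.

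Here I would invoke the hypothesis that $T$ is $d$-closed. Since $dT = \partial T + \bar{\partial} T$ decomposes into pieces of bidegree $(n, n-1)$ and $(n-1, n)$ respectively, which are of distinct type, the vanishing $dT = 0$ forces $\partial T = 0$ (and $\bar{\partial} T = 0$) separately. Consequently the unwanted term drops out and $\partial\eta = \partial\varphi \wedge \bar{\partial}\varphi \wedge T + \varphi \cdot \ddb\varphi \wedge T$. Applying the orbifold Stokes' theorem to the globally defined $(2n-1)$-form $\eta$ gives $\int_\mathcal{X} d\eta = 0$, and rearranging the resulting identity produces the claim.

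The argument is essentially formal once Stokes' theorem is available, so I expect no genuine obstacle; the only points requiring care are the bidegree bookkeeping (to see $\bar{\partial}\eta = 0$ and to extract $\partial T = 0$ from $d$-closedness) and the signs in the Leibniz rule. One should also confirm that $\eta$, being built from the globally defined objects $\varphi$ and $T$ via natural operations, is itself a well-defined form on $\mathcal{X}$ in the chartwise sense of Section \ref{sec:prelim}, so that the orbifold Stokes' theorem genuinely applies.
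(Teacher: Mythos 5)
Your proof is correct and is exactly the argument the paper has in mind: the paper offers no details beyond the remark that the lemma ``is routine and follows from Stokes' theorem,'' and applying the orbifold Stokes' theorem to $\eta = \varphi\,\bar{\partial}\varphi\wedge T$ as you do is the standard way to carry that out. The bidegree bookkeeping, the extraction of $\partial T=0$ from $dT=0$, and the Leibniz signs are all handled correctly.
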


A real $(1,1)$-form $\eta$ is called {positive} (resp. nonnegative) if the corresponding symmetric tensor defined by $(V,W) \mapsto \eta(V, JW)$ is positive (resp. nonnegative) definite for vector fields $V,W$. A real $(p,p)$ form is called positive (resp. nonnegative) if it is the sum of products of positive (resp. nonnegative) real $(1,1)$-forms. The integral of a nonnegative $(n,n)$-form is nonnegative. 

A Riemannian metric $g$ on $\mathcal{X}$ is called hermitian if $J$ is an orthogonal transformation with respect to $g$. A hermitian metric $g$ gives rise to a real $(1,1)$-form $\omega$ defined by $\omega(JV,W) = g(V, W)$. One says that a hermitian metric is K\"ahler if the corresponding $(1,1)$-form is $d$-closed. Conversely, we say a real $(1,1)$-form $\omega$ is compatible with $J$ if the equality $\omega(JV, JW) = \omega(V,W)$ holds for each pair of vector fields $V,W$. It is easily shown that the data of a K\"ahler metric is equivalent to the data of a $J$-compatible positive $d$-closed real $(1,1)$-form. By a K\"ahler orbifold $(\mathcal{X}, \omega)$ we mean an orbifold together with a choice of $J$-compatible positive $d$-closed real $(1,1)$-form $\omega$.

Many properties of K\"ahler manifolds, including Kodaira's $\ddb$-lemma, extend to the setting of K\"ahler orbifolds (see, for example, \cite{baily56}). 

\begin{lemma}[$\ddb$-lemma]
Let $(\mathcal{X},\omega)$ be a compact K\"ahler orbifold. If $\eta$ and $\eta'$ are two real $(1,1)$-forms in the same cohomology class, then there is a function $f : \mathcal{X} \to \mathbb{R}$ such that $\eta' = \eta + \sqrt{-1} \ddb f$.

\end{lemma}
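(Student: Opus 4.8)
The plan is to reduce the lemma to the assertion that a $d$-exact real $(1,1)$-form can be written as $\sqrt{-1}\ddb f$ for a real-valued function $f$, and then to run the standard Hodge-theoretic argument, taking care that each ingredient survives the passage to orbifolds. Since $\eta$ and $\eta'$ represent the same class, their difference $\alpha := \eta' - \eta$ is a $d$-exact real $(1,1)$-form; producing a real $f$ with $\alpha = \sqrt{-1}\ddb f$ gives the lemma. Moreover, writing $\alpha = d\beta$ and averaging against the complex conjugate, we may assume $\beta$ is a real $1$-form.

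First I would assemble the requisite Hodge theory. The Laplacians $\Delta_d$, $\Delta_\partial$, $\Delta_{\bar{\partial}}$ and the formal adjoints $d^*$, $\partial^*$, $\bar{\partial}^*$ are built chartwise and naturally from the K\"ahler metric $\omega$ and the complex structure, so the local operators are $G_\alpha$-invariant and compatible with the embeddings; hence they descend to globally defined operators on $\mathcal{X}$. The analytic core---that each such elliptic operator has finite-dimensional kernel and admits a Green's operator $G$ furnishing an orthogonal Hodge decomposition---is precisely the orbifold Hodge theory of Baily \cite{baily56}, which is also recoverable from the elliptic theory of Section \ref{sec:elliptic}. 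In addition, the K\"ahler identities, e.g. $[\Lambda, \bar{\partial}] = -\sqrt{-1}\,\partial^*$ together with their consequence $\Delta_d = 2\Delta_{\bar{\partial}} = 2\Delta_\partial$, are pointwise algebraic relations established on each chart; being natural in the metric they hold verbatim on $\mathcal{X}$. In particular the three notions of harmonic form coincide, and any harmonic form is simultaneously $\partial$- and $\bar{\partial}$-closed.

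With these tools the computation proceeds as in the nonsingular case. Writing $\beta = \beta^{1,0} + \beta^{0,1}$ with $\beta^{0,1} = \overline{\beta^{1,0}}$, comparison of types in $\alpha = d\beta$ forces $\partial\beta^{1,0} = 0$, $\bar{\partial}\beta^{0,1} = 0$, and $\alpha = \bar{\partial}\beta^{1,0} + \partial\beta^{0,1}$. Applying the $\partial$-Hodge decomposition to the $\partial$-closed form $\beta^{1,0}$ and using that $G$ commutes with $\partial$, one obtains $\beta^{1,0} = h + \partial\phi$ with $h$ harmonic and $\phi := \partial^* G\beta^{1,0}$ a function. Since $h$ is harmonic it is $\bar{\partial}$-closed, so $\bar{\partial}\beta^{1,0} = \bar{\partial}\partial\phi = -\partial\bar{\partial}\phi$; conjugating gives $\partial\beta^{0,1} = \partial\bar{\partial}\bar{\phi}$. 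Hence $\alpha = \partial\bar{\partial}(\bar{\phi} - \phi) = \sqrt{-1}\ddb f$ with $f := -2\,\mathrm{Im}\,\phi$ real, which is exactly what is needed.

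The main obstacle is not the formal manipulation, which is purely algebraic once the decompositions are in hand, but rather the verification that the full package of Hodge theory---existence of the Green's operator and the orthogonal Hodge decomposition for the elliptic Laplacians---remains valid on a compact K\"ahler orbifold. This is where the orbifold structure must be treated with care: one works $G_\alpha$-equivariantly on the charts $\widetilde{U}_\alpha$, exploiting that smooth and harmonic $G_\alpha$-invariant objects patch to global orbifold tensors, and one invokes the elliptic estimates of Section \ref{sec:elliptic}. Once this analytic input is granted (as in \cite{baily56}), the remainder of the argument is identical to the manifold case.
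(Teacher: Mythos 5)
Your argument is correct: the paper itself gives no proof of this lemma, deferring entirely to Baily's orbifold Hodge theory, and your reduction to showing that a $d$-exact real $(1,1)$-form equals $\sqrt{-1}\ddb f$, followed by the type decomposition of $\beta$ and the $\partial$-Hodge decomposition of the $\partial$-closed form $\beta^{1,0}$, is precisely the standard route that the citation to \cite{baily56} encapsulates. The one ingredient you rightly isolate as the genuine content---existence of the Green's operator and the orthogonal Hodge decomposition for the K\"ahler Laplacians on a compact orbifold, obtained by working $G_\alpha$-equivariantly in charts---is exactly what \cite{baily56} supplies, so nothing is missing.
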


A K\"ahler metric $g$ admits a local expression in the charts as 
\[
g_\alpha = (g_\alpha)_{j\bar{k}} dz_\alpha^j \otimes d\bar{z}_\alpha^k
\]
where $(g_\alpha)_{j\bar{k}} = g_\alpha(\partial/\partial z_\alpha^j, \partial/\partial \bar{z}_\alpha^k)$. The corresponding K\"ahler form $\omega$ admits local expression 
\[
\omega_\alpha = \sqrt{-1}(g_\alpha)_{j\bar{k}} dz_\alpha^j \wedge d\bar{z}_\alpha^k.
\]

The Ricci form $\text{Ric}(\omega)$ corresponding to a K\"ahler form $\omega$ is the $(1,1)$-form with local expression  
\[
\text{Ric}(\omega)_\alpha = -\sqrt{-1} \ddb \log \det ((g_\alpha)_{j \bar{k}}).
\]
It can be shown that the cohomology class of $\text{Ric}(\omega)$ does not depend on the particular K\"ahler metric $\omega$, and thus defines an invariant of the orbifold. The first Chern class $c_1(\mathcal{X})$ can be taken to be the real cohomology class determined by the form $\tfrac{1}{2\pi} \text{Ric}(\omega)$ for some choice of K\"ahler form $\omega$. We say that $c_1(\mathcal{X})$ is positive written $c_1(\mathcal{X}) > 0$ (resp. negative written $c_1(\mathcal{X}) < 0$) if $c_1(\mathcal{X})$ is represented by a positive (resp. negative) $(1,1)$-form.

For a K\"ahler metric $g$ on a complex orbifold there is a corresponding Laplacian $\Delta$, which is a second-order uniformly elliptic operator (see Section \ref{sec:elliptic}), defined locally on smooth functions $\varphi$  by  
\[
\Delta(\varphi_\alpha) = g_\alpha^{j\bar{k}} \partial_j \partial_{\bar{k}} \varphi_\alpha
\]
where $g_\alpha^{j\bar{k}}$ is the inverse of the matrix $(g_\alpha)_{j\bar{k}}$ representing the metric.  Alternatively, for a $(1,1)$-form with local expression $\eta_\alpha = \sqrt{-1} (\eta_\alpha)_{j\bar{k}} dz_\alpha^j \wedge d\bar{z}_\alpha^k$, we write $\text{tr}_\omega \eta$ for the smooth function with local expression 
\[
\text{tr}_\omega \eta = g^{j\bar{k}} (\eta_\alpha)_{j\bar{k}}. 
\] 
One can show that if $\varphi$ is a smooth function, then $\Delta \varphi$ satisfies 
\[
n \sqrt{-1} \ddb \varphi \wedge \omega^{n-1} = \text{tr}_\omega(\sqrt{-1} \ddb \varphi) \omega^n = \Delta \varphi \cdot \omega^n
\]
where $n$ is the complex dimension of the complex orbifold $\mathcal{X}$. 

For a smooth $\mathbb{C}$-valued function $\varphi$ on a K\"ahler orbifold $(\mathcal{X}, \omega)$, we have 
\[
|d\varphi|_g^2 = |\partial \varphi|_g^2 + |\bar{\partial}\varphi|_g^2.
\]
Moreover, if $\varphi$ is $\mathbb{R}$-valued, then $|\partial \varphi|_g^2 = |\bar{\partial}\varphi|_g^2$, and hence 
\[
|\partial \varphi|_g^2 = \frac{1}{2}|d\varphi|_g^2.
\]
With this convention, it follows that for a smooth $\mathbb{R}$-valued function $\varphi$ we have
\[
n\sqrt{-1} \partial \varphi \wedge \bar{\partial} \varphi \wedge \omega^{n-1} = |\partial \varphi|_g^2 \cdot \omega^n.
\]

\section{Elliptic operators on orbifolds}\label{sec:elliptic}

The goal of this section is to study some properties and estimates associated to linear elliptic second-order differential operators on a compact orbifold, to construct a Green's function of the complex Laplacian on a compact K\"ahler orbifold, and to establish some useful inequalities of Poincar\'e and Sobolev type. 

For a bounded domain $\Omega$ in $\mathbb{R}^n$, a natural number $k \in \mathbb{N}$, and a number $\alpha \in (0,1)$, recall the $C^{k,\alpha}$-norm of a function $f$ on $\Omega$ can be defined by 
\[
\norm{f}_{C^{k,\alpha}(\Omega)} = \sup_{|\mathbf{\ell}| \leqslant k} |\partial^{\mathbf{\ell}}f | + \sup_{|\mathbf{\ell}| = k} \sup_{\substack{x, y \in \Omega\\ x \ne y}} \frac{|\partial^{\mathbf{\ell}}f(x) - \partial^{\mathbf{\ell}}f(y)|}{|x - y|^\alpha} 
\]
where $\ell = (\ell_1, \ldots, \ell_n)$ is a multi-index and 
\[
\partial^\ell =  \frac{\partial}{\partial x^{\ell_1}} \cdots \frac{\partial}{\partial x^{\ell_n}}. 
\]
The space $C^{k,\alpha}(\Omega)$ is then the space of functions on $\Omega$ whose $C^{k,\alpha}$-norm is finite. 

A uniformly elliptic operator $L$ of second-order with smooth coefficients on $\Omega$ admits an expression of the form
\begin{align}\label{eqn:ellipticoperator}
L(f) = a^{ij}\partial_i\partial_j f + b^m\partial_m f + cf
\end{align}
where $a^{ij}, b^m, c$ are smooth functions and where we are using the Einstein summation convention. The uniform ellipticity implies that there are constants $\lambda, \Lambda > 0$ such that 
\[
\lambda |\xi|^2 \leqslant a^{ij}(x) \xi_i \xi_j \leqslant \Lambda |\xi|^2
\]
for each point $x \in \Omega$ and each vector $\xi \in \mathbb{R}^n$. The following a priori estimates for such operators are well-known (see, for example, \cite{gt15}). 

\begin{theorem}\label{thm:localelliptic}
Let $\Omega \subset \mathbb{R}^n$ be a bounded domain, let $\Omega' \subset \subset \Omega$ be a relatively compact subset, and let $\alpha \in (0,1)$ and $k \in \mathbb{N}$. Then there is a constant $C$ such that if $L(f) = h$ in $\Omega$, then we have 
\[
\norm{f}_{C^{k+2, \alpha}(\Omega')} \leqslant C(\norm{h}_{C^{k,\alpha}(\Omega)} + \norm{f}_{C^0(\Omega)}).
\]
Moreover, the constant $C$ only depends on $k,\alpha,$ the domains $\Omega$ and $ \Omega'$, the $C^{k,\alpha}$-norms of the coefficients of $L$, and the constants of ellipticity $\lambda, \Lambda$. Finally, it is enough only to assume that $f \in C^2(\Omega)$ so that $L(f) = g$ makes sense, and then it follows that $f \in C^{k+2, \alpha}(\Omega')$ whenever $L(f)$ and the coefficients of $L$ are in $C^{k,\alpha}(\Omega)$. 
\end{theorem}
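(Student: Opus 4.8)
The plan is to treat this as the classical interior Schauder estimate, reducing everything to the constant-coefficient Laplacian and then perturbing. Since the statement is purely local and the conclusion concerns a fixed interior subdomain, I would first establish the base case $k = 0$, namely
\[
\norm{f}_{C^{2,\alpha}(\Omega')} \leqslant C\bigl(\norm{h}_{C^{0,\alpha}(\Omega)} + \norm{f}_{C^0(\Omega)}\bigr),
\]
and then obtain the general case by induction on $k$.

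For the base case, the core ingredient is the interior Schauder estimate for the Poisson equation $\Delta u = g$: if $g \in C^{0,\alpha}$, then the second derivatives of the Newtonian potential of $g$ again lie in $C^{0,\alpha}$, with a quantitative bound. I would establish this potential-theoretic estimate first, as it controls $\norm{u}_{C^{2,\alpha}}$ on interior domains in terms of $\norm{g}_{C^{0,\alpha}}$ and $\norm{u}_{C^0}$ for the model operator $\Delta$. One then passes from $\Delta$ to a general constant-coefficient operator $a^{ij}\partial_i\partial_j$ by a linear change of variables diagonalizing the constant positive-definite matrix $(a^{ij})$, observing that the ellipticity constants $\lambda, \Lambda$ control the distortion introduced by this change.

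To handle variable coefficients, I would freeze the leading coefficients at a point $x_0$: writing $a^{ij}(x)\partial_i\partial_j f = a^{ij}(x_0)\partial_i\partial_j f + (a^{ij}(x) - a^{ij}(x_0))\partial_i\partial_j f$, the second term is small on a small ball by Hölder continuity of the $a^{ij}$, so it can be absorbed into the left-hand side after applying the frozen-coefficient estimate on that ball. The lower-order terms $b^m\partial_m f + cf$ are similarly absorbed using the $C^{0,\alpha}$-norms of $b^m, c$ together with interpolation inequalities among $C^0$, $C^1$, and $C^{2,\alpha}$. A covering argument over finitely many such small balls, combined with the interpolation inequalities to soak up intermediate norms, then yields the estimate on $\Omega'$, and the dependence of $C$ on the stated data is tracked throughout. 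For the inductive step I would differentiate the equation: if $L(f) = h$, then applying $\partial_m$ gives $L(\partial_m f) = \partial_m h - (\partial_m a^{ij})\partial_i\partial_j f - (\partial_m b^\ell)\partial_\ell f - (\partial_m c) f$, which is again an operator of the same form acting on $\partial_m f$, with right-hand side controlled in $C^{k-1,\alpha}$ by $\norm{h}_{C^{k,\alpha}}$ and lower-order norms of $f$ already bounded by the inductive hypothesis; applying the level $(k-1)$ estimate on a slightly larger interior domain and summing over $m$ produces the level $k$ estimate. The final regularity assertion follows by running the same freezing argument on difference quotients of $f$, whose uniform $C^{k+2,\alpha}$-bounds on interior domains force $f$ itself to possess the corresponding derivatives in $C^{k,\alpha}$.

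The hard part is the potential-theoretic Schauder estimate for the Laplacian underlying the base case; once that singular-integral estimate is available, the remainder is perturbation, covering, interpolation, and induction. Since all of this is entirely classical and local—the orbifold structure plays no role here, as the estimate lives on a single chart—I would cite \cite{gt15} for the detailed potential estimates and interpolation inequalities rather than reproduce them.
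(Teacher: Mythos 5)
Your outline is correct, but the paper does not prove this theorem at all---it is stated as a standard result with a citation to \cite{gt15}---and what you describe is precisely the classical interior Schauder argument from that reference (Newtonian potential estimates for $\Delta$, reduction to constant coefficients by a linear change of variables controlled by $\lambda,\Lambda$, coefficient freezing with absorption via interpolation, a covering argument, induction on $k$ by differentiating the equation on a nested chain of intermediate subdomains, and difference quotients for the final regularity assertion). Since you also defer to \cite{gt15} for the potential-theoretic core and the interpolation inequalities, your proposal is in substance the same as the paper's approach, just with the standard proof skeleton made explicit.
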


These estimates can be extended to a compact orbifold $\mathcal{X}$ as follows. The H\"older spaces can be defined locally in orbifold charts: Covering $\mathcal{X}$ with orbifold charts, any tensor $T$ has a local expression as a bona fide tensor $T_\beta$ in these charts, and the $C^{k,\alpha}$-norm of $T$ can be defined to be the supremum of the $C^{k,\alpha}$-norms of these $T_\beta$. In particular, if $\mathcal{X}$ is compact, we may achieve that there are finitely many orbifold charts, and the supremum can be taken to be the maximum.  A linear second-order differential operator on an orbifold $\mathcal{X}$ is an operator which admits an expression as \eqref{eqn:ellipticoperator} in each orbifold chart. 

\begin{theorem}\label{thm:ellipticest}
Let $L$ be an elliptic second-order linear differential operator with smooth coefficients on a compact Riemannian orbifold $(\mathcal{X}, g)$, and fix $\alpha \in (0,1)$ and $k \in \mathbb{N}$. Then there is a constant $C$ such that if $L(f) = h$ in $\Omega$, then 
\[
\norm{f}_{C^{k+2, \alpha}(\mathcal{X})} \leqslant C(\norm{h}_{C^{k,\alpha}(\mathcal{X})} + \norm{f}_{C^0(\mathcal{X})}).
\]
Moreover, the constant $C$ only depends on $k,\alpha,$ the orbifold $\mathcal{X}$, the $C^{k,\alpha}$-norms of the coefficients of $L$, and the constants of ellipticity $\lambda, \Lambda$. Finally, it is enough to assume only that $f \in C^2(\mathcal{X})$, and then it follows that $f \in C^{k+2, \alpha}(\mathcal{X})$ whenever $L(f)$ and the coefficients of $L$ are in $C^{k,\alpha}(\mathcal{X})$. 
\end{theorem}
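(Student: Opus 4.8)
The plan is to derive the global estimate on the compact orbifold from the local estimate of Theorem~\ref{thm:localelliptic} via a partition-of-unity argument, exactly as one does for ordinary manifolds, with the single extra observation that on each chart we work $G_\alpha$-equivariantly. Since $\mathcal{X}$ is compact, I may assume the atlas $(\widetilde{U}_\alpha, G_\alpha, \pi_\alpha)$ is finite. Recall that by the definition given in the excerpt, the $C^{k,\alpha}(\mathcal{X})$-norm of a tensor is the maximum over the finitely many charts of the local $C^{k,\alpha}$-norms of its lifts $T_\alpha$, and that the operator $L$ is given in each chart by an honest uniformly elliptic operator of the form~\eqref{eqn:ellipticoperator} acting on the lift $f_\alpha$ of $f$. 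The lifted equation $L(f_\alpha) = h_\alpha$ then holds on each $\widetilde{U}_\alpha$, so the problem reduces to patching local Schauder estimates.

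First I would, for each chart index $\alpha$, choose a slightly smaller relatively compact open set $\widetilde{U}_\alpha' \subset\subset \widetilde{U}_\alpha$ so that the images $\pi_\alpha(\widetilde{U}_\alpha')$ still cover $X$; this is possible because finitely many charts cover the compact space $X$ and each $\pi_\alpha$ is an open map onto $U_\alpha$. On each chart I apply Theorem~\ref{thm:localelliptic} with $\Omega = \widetilde{U}_\alpha$ and $\Omega' = \widetilde{U}_\alpha'$ to obtain a constant $C_\alpha$ with
\[
\norm{f_\alpha}_{C^{k+2,\alpha}(\widetilde{U}_\alpha')} \leqslant C_\alpha\bigl(\norm{h_\alpha}_{C^{k,\alpha}(\widetilde{U}_\alpha)} + \norm{f_\alpha}_{C^0(\widetilde{U}_\alpha)}\bigr),
\]
where $C_\alpha$ depends only on the stated local data. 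Taking $C = \max_\alpha C_\alpha$ over the finitely many charts and passing to the maximum over $\alpha$ on the left-hand side yields the global bound, since the $C^{k+2,\alpha}(\mathcal{X})$-norm is the maximum of the local norms on the $\widetilde{U}_\alpha'$ and each local term on the right is bounded by the corresponding global norm. The dependence of the final constant reduces to the dependence of each $C_\alpha$, giving the asserted dependence on $k$, $\alpha$, $\mathcal{X}$, the coefficient norms, and $\lambda, \Lambda$. The regularity (bootstrap) conclusion follows chart-by-chart from the corresponding assertion of Theorem~\ref{thm:localelliptic}, since $f \in C^2(\mathcal{X})$ means each lift $f_\alpha \in C^2(\widetilde{U}_\alpha)$.

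The one point requiring care—and the main conceptual obstacle, though it is mild—is the compatibility of the lifts across charts and the group invariance. Each lift $f_\alpha$ is automatically $G_\alpha$-invariant and the lifts agree under the embeddings $\lambda_{\alpha\beta}$, so the local estimates are mutually consistent and genuinely control the single global object $f$; no gluing of inconsistent data is needed. I would also note that the constants of ellipticity $\lambda, \Lambda$ may be taken uniform over the finite atlas by passing to the minimum of the lower bounds and the maximum of the upper bounds, which is legitimate precisely because $L$ is elliptic in every chart and there are finitely many charts. With these observations the orbifold estimate is a direct consequence of its Euclidean counterpart, and no new analytic input beyond Theorem~\ref{thm:localelliptic} is required.
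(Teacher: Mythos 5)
Your argument is correct and is essentially identical to the paper's proof: both cover the compact orbifold by finitely many charts, shrink to $G$-invariant relatively compact subsets whose images still cover $X$, apply the local Schauder estimate of Theorem~\ref{thm:localelliptic} on each pair, and take the maximum of the finitely many constants (despite your opening mention of a ``partition-of-unity argument,'' no partition of unity is actually needed or used, just as in the paper). Your additional remarks on $G_\alpha$-invariance of the lifts and uniformity of the ellipticity constants are correct and harmless elaborations of the same proof.
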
 

\begin{proof}
As $\mathcal{X}$ is compact, there is a finite collection of orbifold charts $(\widetilde{U}_\beta, G_\beta, \pi_\beta)$ whose supports $U_\beta$ cover $X$. We may select relatively compact domains $\widetilde{U}_\beta' \subset \widetilde{U}_\beta$ invariant under the $G_\beta$ action satisfying the hypotheses of Theorem \ref{thm:localelliptic}, such that the images $\pi(\widetilde{U}_\beta')$ still cover $X$. We have a finite list of constants $C_\beta$ from Theorem \ref{thm:localelliptic} applied to each pair $(\widetilde{U}_\beta, \widetilde{U}_\beta')$, and taking the maximum gives a large constant $C$. Then an application of the previous theorem gives
\begin{align*}
\norm{f}_{C^{k+2, \alpha}(\mathcal{X})} &= \max_\beta \norm{f_\beta}_{C^{k+2, \alpha}(\widetilde{U}_\beta')} \\
& \leqslant \max_\beta C_\beta(\norm{h}_{C^{k,\alpha}(U_\beta)} + \norm{f}_{C^0(U_\beta)}) \\
& \leqslant C(\norm{h}_{C^{k,\alpha}(\mathcal{X})} + \norm{f}_{C^0(\mathcal{X})}),
\end{align*}
as desired. 
\end{proof}

Moreover, the mapping properties of such operators on compact orbifolds are well-understood, as demonstrated below. We require the following standard result in functional analysis which can be found, for example, in \cite[Proposition 2.9.7]{narasimhan68}.

\begin{lemma}
Let $E_1$ and $E_2$ be Banach spaces, and let $A_1, A_2 : E_1 \to E_2$ be bounded linear operators. Suppose that 
\begin{enumerate}
\item[(i)] $A_1$ is injective and the range of $A_1$ is closed;
\item[(ii)] $A_2$ is a compact operator.
\end{enumerate}
Then the range of $A_1 + A_2$ is closed in $E_2$. 
\end{lemma}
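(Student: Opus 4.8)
The plan is to prove this standard functional-analytic lemma directly from the definitions, using the fact that a closed-range injective operator is bounded below modulo a compact perturbation. First I would recall the key consequence of hypothesis (i): since $A_1$ is injective with closed range, the open mapping theorem applied to $A_1 : E_1 \to A_1(E_1)$ (a bijection onto a Banach space) gives a constant $c > 0$ with
\[
\norm{A_1 x} \geqslant c \norm{x} \quad \text{for all } x \in E_1.
\]
This lower bound is the workhorse of the argument.

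The main technical step is to establish that $A_1 + A_2$ is itself bounded below modulo a finite-dimensional obstruction, i.e. that its kernel is finite-dimensional and its range is closed. I would argue by contradiction following the standard pattern: suppose $(A_1 + A_2)(E_1)$ is not closed. A convenient route is to first show there is a constant $C > 0$ such that
\[
\norm{x} \leqslant C\left(\norm{(A_1 + A_2)x} + \norm{A_2 x}\right)
\]
for all $x$ in a complement of $\ker(A_1 + A_2)$; combined with compactness of $A_2$ this yields the estimate $\norm{x} \leqslant C\norm{(A_1+A_2)x}$ on that complement after discarding the finite-dimensional piece. Concretely, take a sequence $x_n$ with $\norm{x_n} = 1$ and $(A_1+A_2)x_n \to 0$; by compactness of $A_2$ pass to a subsequence with $A_2 x_n \to y$, whence $A_1 x_n = (A_1+A_2)x_n - A_2 x_n \to -y$. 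The lower bound on $A_1$ forces $x_n$ to be Cauchy, so $x_n \to x$ with $\norm{x} = 1$ and $(A_1 + A_2)x = 0$; this shows the unit sphere of $\ker(A_1+A_2)$ is compact, so the kernel is finite-dimensional, and a parallel argument on a closed complement $E_1 = \ker(A_1+A_2) \oplus M$ produces the desired uniform estimate $\norm{x} \leqslant C \norm{(A_1+A_2)x}$ for $x \in M$.

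From that estimate, closedness of the range is routine: if $(A_1 + A_2)x_n \to z$ with $x_n \in M$, the estimate shows $x_n$ is Cauchy, hence $x_n \to x \in M$ by closedness of $M$, and continuity gives $(A_1+A_2)x = z$, so $z$ lies in the range. Since $(A_1+A_2)(E_1) = (A_1+A_2)(M)$, the range is closed.

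I expect the main obstacle to be the step producing a \emph{uniform} lower bound for $A_1 + A_2$ on a complement of its kernel. The subtlety is that one cannot apply the lower bound for $A_1$ naively, because $A_2 x_n$ need only converge along a subsequence; the argument must interleave the compactness of $A_2$ with the coercivity of $A_1$, and one must be careful to quotient out the (a priori unknown to be finite-dimensional) kernel before the complement $M$ is available. Handling this cleanly—first establishing finite-dimensionality of the kernel, then choosing the closed complement $M$, then running the contradiction argument on $M$—is where the care is needed; the remaining deductions are formal.
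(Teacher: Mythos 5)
Your proof is correct. Note that the paper does not actually prove this lemma: it is quoted verbatim from the literature (the citation to Narasimhan, Proposition 2.9.7) and used as a black box, so your argument supplies a proof where the paper offers none. What you write is the standard Fredholm-perturbation argument, and all the pieces are in order: the lower bound $\norm{A_1x}\geqslant c\norm{x}$ from the open mapping theorem applied to the bijection $A_1:E_1\to A_1(E_1)$; the observation that any unit-norm sequence with $(A_1+A_2)x_n\to 0$ is Cauchy after passing to a subsequence along which $A_2x_n$ converges; the resulting sequential compactness of the unit sphere of $\ker(A_1+A_2)$, hence finite-dimensionality by Riesz's lemma; the existence of a closed complement $M$ of a finite-dimensional subspace; the uniform bound $\norm{x}\leqslant C\norm{(A_1+A_2)x}$ on $M$ by the same contradiction argument (the limit lands in $\ker(A_1+A_2)\cap M=\{0\}$ yet has norm one); and finally closedness of the range, since $(A_1+A_2)(E_1)=(A_1+A_2)(M)$. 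You also correctly flag the one genuine subtlety, namely that the kernel must be shown finite-dimensional before the complement $M$ can be chosen. No gaps.
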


\begin{lemma}
Let $L$ be an elliptic second-oder operator with smooth coefficients on a compact orbifold $\mathcal{X}$. Then the range of $L : C^{k+2, \alpha}(\mathcal{X}) \to C^{k,\alpha}(\mathcal{X})$ is closed. 
\end{lemma}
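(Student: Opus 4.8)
The plan is to show that the range of $L$ is closed by combining the elliptic a priori estimate of Theorem \ref{thm:ellipticest} with the abstract functional-analytic Lemma quoted just above. The strategy rests on the standard observation that an elliptic operator becomes \emph{semi-Fredholm} precisely because its estimate controls the $C^{k+2,\alpha}$-norm of a solution by the $C^{k,\alpha}$-norm of its image plus a $C^0$-term, and the inclusion $C^{k+2,\alpha}(\mathcal{X}) \hookrightarrow C^0(\mathcal{X})$ is compact on the compact orbifold $\mathcal{X}$ by the Arzel\`a--Ascoli theorem applied chart-by-chart. The operator $L$ itself need not be injective, so I cannot apply the abstract lemma directly to $L$; the first step is therefore to pass to a quotient or, equivalently, to restrict to a complement of the finite-dimensional kernel.

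First I would establish that the kernel $\ker L \subset C^{k+2,\alpha}(\mathcal{X})$ is finite-dimensional. This follows from the estimate: on the kernel we have $L(f)=0$, so $\norm{f}_{C^{k+2,\alpha}} \leqslant C \norm{f}_{C^0}$, which means the $C^{k+2,\alpha}$-norm and the $C^0$-norm are comparable on $\ker L$; since the unit ball of $C^{k+2,\alpha}$ is $C^0$-precompact, the unit ball of $\ker L$ in its own norm is compact, forcing $\dim \ker L < \infty$. A finite-dimensional subspace of a Banach space is closed and admits a closed complement $W$, so I may write $C^{k+2,\alpha}(\mathcal{X}) = \ker L \oplus W$, and the range of $L$ equals the range of the restriction $L|_W : W \to C^{k,\alpha}(\mathcal{X})$, which is now injective.

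Next I would arrange matters so that the abstract Lemma applies with $E_1 = W$ and $E_2 = C^{k,\alpha}(\mathcal{X})$. Let $A_1 = L|_W$ be the injective restriction and let $A_2 : W \to C^{k,\alpha}(\mathcal{X})$ be (minus) the composition of the compact inclusion $W \hookrightarrow C^0(\mathcal{X})$ with a bounded inclusion into $C^{k,\alpha}(\mathcal{X})$; more cleanly, I would introduce the perturbed operator and use that $L|_W$ has closed range once I verify hypothesis (i) of the abstract Lemma for an auxiliary injective operator with closed range obtained by adding a compact term. The concrete realization is this: the estimate applied on $W$ yields, after absorbing the $C^0$-term, a bound $\norm{f}_{C^{k+2,\alpha}} \leqslant C' \norm{L f}_{C^{k,\alpha}}$ for $f \in W$. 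Indeed, if no such $C'$ existed there would be a sequence $f_i \in W$ with $\norm{f_i}_{C^{k+2,\alpha}}=1$ and $\norm{L f_i}_{C^{k,\alpha}} \to 0$; by compactness of $W \hookrightarrow C^0$ a subsequence converges in $C^0$, and the estimate $\norm{f_i - f_j}_{C^{k+2,\alpha}} \leqslant C(\norm{L(f_i-f_j)}_{C^{k,\alpha}} + \norm{f_i - f_j}_{C^0})$ shows this subsequence is Cauchy in $C^{k+2,\alpha}$, hence converges to some $f \in W$ with $\norm{f}_{C^{k+2,\alpha}}=1$ and $Lf = 0$, contradicting injectivity of $L|_W$. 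A bounded-below injective operator has closed range, and pulling back along the splitting gives that $L(C^{k+2,\alpha}(\mathcal{X}))$ is closed.

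The main obstacle I anticipate is not any single hard estimate—Theorem \ref{thm:ellipticest} supplies everything analytic—but rather the bookkeeping needed to verify the compactness of the inclusion $C^{k+2,\alpha}(\mathcal{X}) \hookrightarrow C^0(\mathcal{X})$ in the orbifold setting and to handle the non-injectivity cleanly. For the compactness I would work in the finite cover by charts $(\widetilde{U}_\beta, G_\beta, \pi_\beta)$, lift functions to $G_\beta$-invariant functions on the $\widetilde{U}_\beta$, and apply the ordinary Arzel\`a--Ascoli compactness of $C^{k+2,\alpha}(\widetilde{U}_\beta') \hookrightarrow C^0(\widetilde{U}_\beta')$ on relatively compact subdomains, then patch via the finite subcover; the $G_\beta$-invariance is preserved under $C^0$-limits, so the limit is a genuine orbifold function. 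The argument via the contradiction sequence in the previous paragraph is in fact the most economical route and sidesteps the explicit invocation of the abstract Lemma, though I would keep the abstract Lemma available as the conceptual backbone identifying $L$ as $A_1 + A_2$ with $A_1$ an injective closed-range operator and $A_2$ compact.
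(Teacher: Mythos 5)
Your argument is correct, but it takes a genuinely different route from the paper. The paper never discusses the kernel of $L$ at all: it doubles the codomain, setting $E_2 = C^{k,\alpha}(\mathcal{X}) \oplus C^{k,\alpha}(\mathcal{X})$, $A_1(f) = (Lf, f)$ and $A_2(f) = (0,-f)$, so that $A_1$ is trivially injective, the a priori estimate of Theorem \ref{thm:ellipticest} makes $A_1$ bounded below (hence of closed range), and $A_2$ is compact via the inclusion $C^{k+2,\alpha}(\mathcal{X}) \hookrightarrow C^{k,\alpha}(\mathcal{X})$; the abstract lemma then gives closed range of $A_1 + A_2 = (Lf,0)$ directly. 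You instead run the classical semi-Fredholm argument: finite-dimensionality of $\ker L$ from the estimate plus compactness of $C^{k+2,\alpha}(\mathcal{X}) \hookrightarrow C^0(\mathcal{X})$, a closed complement $W$, and a contradiction-by-compactness argument showing $\norm{f}_{C^{k+2,\alpha}(\mathcal{X})} \leqslant C' \norm{Lf}_{C^{k,\alpha}(\mathcal{X})}$ on $W$, whence $L|_W$ is bounded below with closed range equal to $\mathrm{Ran}(L)$. Both proofs rest on exactly the same two analytic inputs (the Schauder estimate and Arzel\`a--Ascoli in a finite atlas of charts), so neither is harder in the orbifold setting; the paper's version is shorter and avoids any appeal to finite-dimensionality or to the existence of a closed complement, while yours yields strictly more information (that $L$ is semi-Fredholm, with a quantitative lower bound off the kernel) that the paper only recovers later via the open mapping theorem in Theorem \ref{thm:ellipticisom}. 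Your middle paragraph wavers on how to force your setup into the abstract lemma before settling on the direct contradiction argument; that final argument is the right one and is complete as stated, so the detour through the abstract lemma could simply be dropped.
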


\begin{proof}
Define two Banach spaces $E_1$ and $E_2$ by 
\begin{align*}
E_1 &= C^{k+2, \alpha}(\mathcal{X}) \\
E_2 &= C^{k,\alpha}(\mathcal{X}) \oplus C^{k, \alpha}(\mathcal{X}).
\end{align*}
Define two bounded linear operators $A_1, A_2 : E_1 \to E_2$ by 
\begin{align*}
A_1(f) &= (Lf, f) \\
A_2(f) &= (0, -f).
\end{align*}
The operator $A_1$ is injective because the second component is. Moreover, the range of $A_1$ is closed by the uniform bound 
\begin{align*}
\norm{f}_{C^{k+2, \alpha}(\mathcal{X})} &\leqslant C (\norm{Lf}_{C^{k,\alpha}(\mathcal{X})} + \norm{f}_{C^0(\mathcal{X})}) \\
&\leqslant C (\norm{Lf}_{C^{k,\alpha}(\mathcal{X})} + \norm{f}_{C^{k,\alpha}(\mathcal{X})}) = C \norm{A_1(f)}_{E_2}.
\end{align*}
Finally $A_2$ is compact by the Arzela-Ascoli theorem applied to the inclusion of $C^{k+2, \alpha}(\mathcal{X})$ into $C^{k,\alpha}(\mathcal{X})$. It follows from the previous lemma that $A_1 + A_2$ has closed range. But the range of $A_1 + A_2$ is identified with $\text{Ran}(L) \oplus \{0\}$, and hence $L$ has closed range. 
\end{proof}

\begin{theorem}\label{thm:ellipticisom}
Let $L$ be an elliptic second-oder operator with smooth coefficients on a compact Riemannian orbifold $(\mathcal{X}, g)$. Then $L$ is an isomorphism of Banach spaces 
\[
L : (\ker L)^\perp \cap C^{k+2, \alpha}(\mathcal{X}) \to (\ker L^*)^\perp \cap C^{k,\alpha}(\mathcal{X}),
\]
where $L^*$ denotes the adjoint of $L$ with respect to the indicated Banach norms. 
\end{theorem}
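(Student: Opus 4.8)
The plan is to establish the isomorphism by combining the three Fredholm-type properties that the elliptic theory developed above provides: finite-dimensional kernel, closed range, and a solvability criterion identifying the range with the orthogonal complement of the cokernel. The statement that $L$ restricts to a Banach-space isomorphism between $(\ker L)^\perp \cap C^{k+2,\alpha}(\mathcal{X})$ and $(\ker L^*)^\perp \cap C^{k,\alpha}(\mathcal{X})$ is, at heart, the assertion that $L$ is a Fredholm operator together with the standard $L^2$-duality relation $\operatorname{Ran}(L) = (\ker L^*)^\perp$. So the first step is to collect these ingredients in the orbifold setting.

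First I would show that $\ker L$ is finite-dimensional. This follows from the a priori estimate of Theorem \ref{thm:ellipticest}: on the kernel one has $\norm{f}_{C^{k+2,\alpha}(\mathcal{X})} \leqslant C \norm{f}_{C^0(\mathcal{X})}$, so the identity map from $\ker L$ with the $C^{k+2,\alpha}$-norm to $\ker L$ with the $C^0$-norm has a bounded inverse, while the Arzela--Ascoli embedding $C^{k+2,\alpha} \hookrightarrow C^0$ is compact; a normed space whose unit ball is compact is finite-dimensional. An identical argument applies to $L^*$, so $\ker L^*$ is finite-dimensional as well. Because these kernels are finite-dimensional, the orthogonal complements $(\ker L)^\perp$ and $(\ker L^*)^\perp$ are closed subspaces of finite codimension, hence themselves Banach spaces in the inherited norms, and the orthogonal projections onto them are bounded. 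Next I would record that $\operatorname{Ran}(L)$ is closed, which is exactly the content of the preceding lemma, and I would identify $\operatorname{Ran}(L)$ with $(\ker L^*)^\perp$: the inclusion $\operatorname{Ran}(L) \subseteq (\ker L^*)^\perp$ is the elementary integration-by-parts statement that $\langle Lf, \psi\rangle = \langle f, L^*\psi\rangle = 0$ whenever $\psi \in \ker L^*$, and the reverse inclusion uses closedness of the range together with the Hahn--Banach theorem (any element of $(\ker L^*)^\perp$ that were not in the closed subspace $\operatorname{Ran}(L)$ could be separated by a functional, which by self-duality of the $L^2$-pairing would produce a nonzero element of $\ker L^*$ annihilating it, a contradiction).

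Granting these facts, the isomorphism is then formal. The restricted map $L : (\ker L)^\perp \cap C^{k+2,\alpha}(\mathcal{X}) \to (\ker L^*)^\perp \cap C^{k,\alpha}(\mathcal{X})$ is bounded, injective (its domain meets $\ker L$ only in zero), and surjective (its image is $\operatorname{Ran}(L) = (\ker L^*)^\perp$, and every element of the range is realized by a preimage whose $(\ker L)^\perp$-component maps to the same value). An injective, surjective bounded linear map between Banach spaces is an isomorphism by the open mapping theorem; alternatively, the a priori estimate gives the bound $\norm{f}_{C^{k+2,\alpha}(\mathcal{X})} \leqslant C \norm{Lf}_{C^{k,\alpha}(\mathcal{X})}$ on $(\ker L)^\perp$ directly, once one absorbs the lower-order term $\norm{f}_{C^0(\mathcal{X})}$, which furnishes boundedness of the inverse explicitly.

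The main obstacle is the absorption step just mentioned: proving the clean estimate $\norm{f}_{C^{k+2,\alpha}(\mathcal{X})} \leqslant C \norm{Lf}_{C^{k,\alpha}(\mathcal{X})}$ for $f \in (\ker L)^\perp$. The estimate of Theorem \ref{thm:ellipticest} always carries the extra term $\norm{f}_{C^0(\mathcal{X})}$, and one must remove it on the complement of the kernel. The standard argument is by contradiction and compactness: if no such $C$ existed, choose $f_i \in (\ker L)^\perp$ with $\norm{f_i}_{C^{k+2,\alpha}} = 1$ and $\norm{Lf_i}_{C^{k,\alpha}} \to 0$; by Arzela--Ascoli a subsequence converges in $C^{k+2,\beta}$ for $\beta < \alpha$ (and, using the elliptic estimate again, in $C^{k+2,\alpha}$) to some limit $f$ with $Lf = 0$ and $\norm{f} = 1$, so $f \in \ker L$; but $f$ also lies in the closed subspace $(\ker L)^\perp$, forcing $f = 0$ and contradicting $\norm{f} = 1$. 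Care is needed to ensure the convergence upgrades to the $C^{k+2,\alpha}$-norm rather than merely a weaker Hölder norm, and that the orbifold-chart definition of the norms behaves well under these limits; but both points are handled by reapplying the interior estimate of Theorem \ref{thm:ellipticest} chart by chart, exactly as in the proof of that theorem. I should also note that the adjoint $L^*$ is understood with respect to the $L^2$-pairing induced by the Riemannian volume form, and that this pairing is compatible with the orbifold integration defined in Section \ref{sec:prelim}, so that integration by parts is valid via the orbifold Stokes' theorem.
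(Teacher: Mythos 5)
Your proposal is correct and follows essentially the same route as the paper: injectivity from restricting the domain to $(\ker L)^\perp$, surjectivity from the closed-range lemma together with the identification $(\ker L^*)^\perp = \overline{\operatorname{Ran}(L)} = \operatorname{Ran}(L)$, and boundedness of the inverse from the open mapping (bounded inverse) theorem. The paper's proof is terser and simply cites these facts, whereas you supply the supporting details (finite-dimensionality of the kernels, the Hahn--Banach duality argument, and the absorption of the $\norm{f}_{C^0}$ term), all of which are consistent with the paper's framework.
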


\begin{proof}
The restriction of the domain to $(\ker L)^\perp$ ensures that $L$ is injective and the restriction of the codomain to $(\ker L^*)^\perp = \overline{\text{Ran}(L)}$ ensures that $L$ is surjective by the previous lemma.  Hence $L$ is a bounded invertible linear operator between Banach spaces. We conclude that the inverse of $L$ is bounded as well from \cite[\S 23 Theorem 2]{kf70}, and therefore $L$ is an isomorphism of Banach spaces. 
\end{proof}

Additionally, using Sobolev spaces, one can obtain not only a Green's function of the complex Laplacian, but also inequalities Poincar\'e and Sobolev stype, as follows. 

For a compact Riemannian orbifold $(\mathcal{X}, g)$ and a number $p \geqslant 1$, let $L^p(\mathcal{X})$ denote the completion of the space of smooth functions under the norm 
\[
\norm{f}_p = \left(\int_{\mathcal{X}} |f|^p\: dV \right)^{1/p}.
\]
For any tensor $S$, we also use the same notation for the norm 
\[
\norm{S}_p = \left(\int_{\mathcal{X}} |S|_g^p\: dV \right)^{1/p}.
\]
For a nonnegative integer $k$, let $L_k^p(\mathcal{X})$ denote the completion of the space of smooth functions with respect to the norm 
\[
\norm{f}_{L_k^p}^p = \sum_{\ell \leqslant k} \norm{\nabla^\ell f}_p^p.
\]

\begin{remark}
With this above notation, for a compact K\"ahler orbifold $(\mathcal{X}, \omega)$ we find that the following coincide $\norm{df}_2 = \norm{\nabla f}_2 = \sqrt{2}\norm{\partial f}_2$.
\end{remark}

The following version of Rellich's lemma on orbifolds can be found in \cite{chiang90}. 

\begin{theorem}[Rellich]
For a compact Riemannian orbifold $(\mathcal{X},g)$, the inclusion $L_{k+1}^2(\mathcal{X}) \to L_{k}^2(\mathcal{X})$ is compact. 
\end{theorem}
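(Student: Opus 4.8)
The plan is to reduce the orbifold statement to the classical Rellich--Kondrachov theorem on bounded Euclidean domains by means of a finite atlas and a subordinate partition of unity, exactly in the spirit of the proof of Theorem~\ref{thm:ellipticest}. Concretely, it suffices to show that every sequence $\{f_m\}$ which is bounded in $L^2_{k+1}(\mathcal{X})$ admits a subsequence that is Cauchy in $L^2_k(\mathcal{X})$. Since $\mathcal{X}$ is compact, I would first fix a finite collection of charts $(\widetilde{U}_\beta, G_\beta, \pi_\beta)$ together with relatively compact invariant subdomains $\widetilde{U}_\beta' \subset\subset \widetilde{U}_\beta$ whose images cover $X$, and a partition of unity $\{\varphi_\beta\}$ subordinate to the supports $U_\beta$, so that each $\varphi_\beta f_m$ lifts to a $G_\beta$-invariant function supported in the bounded domain $\widetilde{U}_\beta'$.

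The next step is a comparison of norms on a single chart. On the fixed relatively compact set $\widetilde{U}_\beta'$ the metric $g_\beta$ is uniformly comparable to the flat Euclidean metric, and the Levi--Civita covariant derivatives $\nabla^\ell$ differ from the flat partial derivatives $\partial^\ell$ only by lower-order terms whose coefficients (built from the Christoffel symbols of $g_\beta$ and their derivatives) are bounded on $\widetilde{U}_\beta'$. Consequently the intrinsic Sobolev norm $\norm{\,\cdot\,}_{L^2_{k+1}}$ restricted to functions supported in $\widetilde{U}_\beta'$ is equivalent to the ordinary Euclidean Sobolev norm on $\widetilde{U}_\beta'$, and since multiplication by the smooth compactly supported $\varphi_\beta$ is a bounded operation, the lifted sequence $\{\varphi_\beta f_m\}_m$ is bounded in the Euclidean space $L^2_{k+1}(\widetilde{U}_\beta')$.

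I would then invoke the classical Rellich--Kondrachov theorem on the bounded domain $\widetilde{U}_\beta'$ to extract, for each $\beta$, a subsequence along which $\{\varphi_\beta f_m\}$ converges in the Euclidean $L^2_k$. Because there are only finitely many charts, a diagonal argument produces a single subsequence (still denoted $\{f_m\}$) for which $\{\varphi_\beta f_m\}$ converges in $L^2_k(\widetilde{U}_\beta')$ simultaneously for every $\beta$. The limit on each chart is again $G_\beta$-invariant, since $G_\beta$ acts by isometries and invariance is preserved under $L^2$-limits, so these local limits descend to genuine pieces on $\mathcal{X}$; summing over $\beta$ via $f_m = \sum_\beta \varphi_\beta f_m$ and using the norm comparison in reverse shows that $\{f_m\}$ is Cauchy in $L^2_k(\mathcal{X})$.

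The main obstacle is the norm-equivalence step: one must check carefully that the intrinsic $\nabla$-based Sobolev norm and the Euclidean $\partial$-based norm are comparable on each chart uniformly for compactly supported functions, so that boundedness and convergence transfer cleanly in both directions between $L^2_k(\mathcal{X})$ and the Euclidean $L^2_k(\widetilde{U}_\beta')$. Once this bookkeeping is in place, the orbifold-specific points—namely that the partition of unity lifts to equivariant compactly supported data and that $G_\beta$-invariance survives passage to $L^2$-limits—are routine, and the compactness follows entirely from the classical Euclidean result applied chart by chart.
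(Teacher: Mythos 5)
The paper does not actually prove this statement: it simply cites \cite{chiang90} for the orbifold version of Rellich's lemma, so your argument cannot coincide with ``the paper's proof'' --- but it is a correct, self-contained substitute, and it follows exactly the localization template the paper itself uses for Theorem~\ref{thm:ellipticest} and for the Sobolev inequality (finite atlas, invariant relatively compact subdomains, subordinate partition of unity, reduction to the classical Euclidean statement, reassembly via $f_m=\sum_\beta\varphi_\beta f_m$). Two bookkeeping points are worth making explicit when you write it up. First, the partition of unity must be chosen subordinate to the images $\pi_\beta(\widetilde{U}_\beta')$ of the shrunken domains, not merely to the $U_\beta$, so that each lift $\varphi_\beta f_m$ really is compactly supported in $\widetilde{U}_\beta'$; once that holds you can extend by zero to a ball containing $\widetilde{U}_\beta'$, which sidesteps any boundary-regularity hypothesis in the classical Rellich--Kondrachov theorem. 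Second, since $L^2_{k+1}(\mathcal{X})$ is defined as a completion of smooth functions, the norm-equivalence step (comparing $\nabla^\ell$ with $\partial^\ell$ on the fixed compact sets, with Christoffel symbols and the derivatives of $\varphi_\beta$ up to order $k+1$ uniformly bounded) should be stated for smooth functions and then extended by density. With those in place, the diagonal extraction over the finitely many charts and the observation that $G_\beta$-invariance passes to $L^2$-limits complete the proof; the argument is sound and arguably preferable to an unexplained citation in an expository account.
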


If $L$ is a linear elliptic differential operator of second order, then $L$ defines a map $L : L_{k+2}^p(\mathcal{X}) \to L_k^p(\mathcal{X})$, and similarly to the case of H\"older norms, the mapping properties of $L$ with respect to these Sobolev norms are well-understood. 

\begin{theorem}\label{thm:ellipticisom2}
Let $L$ be an elliptic operator of second order on a compact Riemannian orbifold $(\mathcal{X}, g)$, and fix a number $p \geqslant 1$ and a number $k \in \mathbb{N}$. Then there is a constant $C$ such that for all smooth functions $f$ we have 
\[
\norm{f}_{L_{k+2}^p} \leqslant C (\norm{Lf}_{L_k^p} + \norm{f}_{L_k^p}).
\]
Moreover the constant $C$ depends only on $\mathcal{X}, g, L, k, p$. Finally, $L$ induces an isomorphism of Banach spaces 
\[
L : (\ker L)^\perp \cap L_{k+2}^p(\mathcal{X}) \to  (\ker L^*)^\perp \cap L_{k}^p(\mathcal{X}).
\]
\end{theorem}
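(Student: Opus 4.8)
The plan is to reproduce, in the Sobolev setting, the three-step development already carried out for H\"older norms in Theorems \ref{thm:ellipticest} and \ref{thm:ellipticisom}: first the a priori estimate, then closedness of the range via the functional-analysis lemma, and finally the isomorphism statement via the open mapping theorem. The only genuinely new analytic input is the local interior $L^p$ estimate for elliptic operators; everything downstream is formally identical to the H\"older case, with Rellich's lemma playing the role that Arzela--Ascoli played there.

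For the estimate I would begin from the classical interior $W^{k+2,p}$ (Calder\'on--Zygmund) estimate on a bounded domain: if $Lf = h$ on $\Omega$ and $\Omega' \subset\subset \Omega$, then $\norm{f}_{L^p_{k+2}(\Omega')} \leqslant C(\norm{h}_{L^p_k(\Omega)} + \norm{f}_{L^p(\Omega)})$, with $C$ depending only on the data listed. This is the exact $L^p$-analogue of Theorem \ref{thm:localelliptic}. I would then transfer it to the orbifold precisely as in the proof of Theorem \ref{thm:ellipticest}: cover $\mathcal{X}$ by finitely many charts $(\widetilde U_\beta, G_\beta, \pi_\beta)$, choose $G_\beta$-invariant relatively compact subdomains $\widetilde U_\beta' \subset\subset \widetilde U_\beta$ whose images still cover $X$, apply the local estimate to each invariant representative $f_\beta$, and take the maximum of the finitely many resulting constants. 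Since the Sobolev norms on $\mathcal{X}$ are by definition assembled from the invariant local representatives, summing (or maximizing) over $\beta$ yields the global bound.

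For the isomorphism I would first record that $\ker L$ is finite-dimensional: its elements are smooth by the regularity clause of the estimate, and the estimate together with the compact inclusion $L^p_{k+2}(\mathcal{X}) \hookrightarrow L^p_k(\mathcal{X})$ shows the unit ball of $\ker L$ is precompact, so $\ker L$ is finite-dimensional and hence admits a closed complement, which we denote $(\ker L)^\perp$. Closedness of the range then follows verbatim from the functional-analysis lemma applied to $E_1 = L^p_{k+2}(\mathcal{X})$, $E_2 = L^p_k(\mathcal{X}) \oplus L^p_k(\mathcal{X})$, $A_1(f) = (Lf,f)$, and $A_2(f) = (0,-f)$: the a priori estimate gives $\norm{f}_{E_1} \leqslant C\norm{A_1(f)}_{E_2}$, so $A_1$ is injective with closed range, while $A_2$ is compact because the inclusion $L^p_{k+2}(\mathcal{X}) \hookrightarrow L^p_k(\mathcal{X})$ is. Identifying $(\ker L^*)^\perp$ with $\overline{\mathrm{Ran}(L)} = \mathrm{Ran}(L)$ through the Hahn--Banach duality relating the kernel of the adjoint to the closure of the range, the restricted map $L : (\ker L)^\perp \cap L^p_{k+2}(\mathcal{X}) \to (\ker L^*)^\perp \cap L^p_k(\mathcal{X})$ is a bounded bijection, hence an isomorphism by the bounded inverse theorem cited from \cite{kf70}, exactly as in Theorem \ref{thm:ellipticisom}.

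The main obstacle is bookkeeping rather than conceptual: the version of Rellich's lemma stated above is proved only for $p = 2$, whereas the compactness of $A_2$ requires the compact inclusion $L^p_{k+2}(\mathcal{X}) \hookrightarrow L^p_k(\mathcal{X})$ for the given $p$. I would close this gap by noting that the classical Rellich--Kondrachov theorem on bounded domains holds for every $p \geqslant 1$ and patches to the orbifold by exactly the finite-cover argument used for $p=2$ in \cite{chiang90}; when $p=2$ one may instead simply compose the stated inclusions $L^2_{k+2}(\mathcal{X}) \hookrightarrow L^2_{k+1}(\mathcal{X})$ and $L^2_{k+1}(\mathcal{X}) \hookrightarrow L^2_k(\mathcal{X})$. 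The remaining care lies in reading the symbol $\perp$ consistently for Banach rather than Hilbert spaces, which is handled by interpreting $(\ker L^*)^\perp$ as the pre-annihilator furnished by the closed range theorem and $(\ker L)^\perp$ as the closed complement of the finite-dimensional kernel.
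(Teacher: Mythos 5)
Your proposal is correct and follows exactly the route the paper intends: the paper states this theorem without proof, deferring to the phrase ``similarly to the case of H\"older norms,'' and your argument is precisely that analogy carried out --- local Calder\'on--Zygmund estimate patched over a finite invariant cover, closed range via the lemma with $A_1(f)=(Lf,f)$ and $A_2(f)=(0,-f)$, and the bounded inverse theorem. Your explicit handling of the two points the paper glosses over (Rellich--Kondrachov for $p\neq 2$, and the Banach-space reading of $\perp$ via the closed range theorem and a closed complement of the finite-dimensional kernel) is a welcome tightening rather than a deviation.
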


Just as in the case of H\"older norms, there are local versions of these inequalities: For any relatively compact $\Omega' \subset \subset \Omega$, we have an inequality of the form 
\[
\norm{f}_{L_{k+2}^p(\Omega')} \leqslant C (\norm{Lf}_{L_k^p(\Omega)} + \norm{f}_{L_k^p(\Omega)}).
\]

In particular, the complex Laplacian $\Delta$ on a compact K\"ahler orbifold $(\mathcal{X}, \omega)$ is an elliptic operator of second order.  For a point $x \in X$, consider the real-valued function $\delta_x$ defined on smooth functions $\varphi$ by the rule
\[
\delta_x(\varphi) = \bar{\varphi} - \varphi(x)
\]
where $\bar{\varphi}$ denotes the average value of $\varphi$. Then we may regard $\delta_x$ as an element of the Hilbert space $L^2(\mathcal{X})$ (= $L_0^2(\mathcal{X})$) via the $L^2$-inner product (c.f. \cite{chiang90}).  Since $\delta_x$ vanishes on smooth functions with average value zero, it follows from Theorem \ref{thm:ellipticisom2} that there is an element $G_x$ of $L_{2}^2(\mathcal{X})$ such that $\Delta G_x = \delta_x$.  Such a function $G_x$ is called a Green's function associated to $\Delta$ and satisfies 
\[
\int_{\mathcal{X}} G_x \Delta \varphi \cdot \omega^n = \bar{\varphi} - \varphi(x). 
\]

\begin{theorem}[Poincar\'e inequality]\label{thm:poincare}
Let $(\mathcal{X}, g)$ be a compact K\"ahler orbifold. There is a constant $C$ depending only on $\mathcal{X}$ and $g$ such that if $\varphi$ is a smooth function with average value zero, then 
\[
\norm{\varphi}_2 \leqslant C \norm{\partial \varphi}_2.
\]
\end{theorem}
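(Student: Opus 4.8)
The plan is to argue by contradiction, using the compactness of the inclusion $L_1^2(\mathcal{X}) \hookrightarrow L^2(\mathcal{X})$ supplied by Rellich's lemma above, together with the fact that a function on a compact \emph{connected} orbifold with vanishing gradient must be constant. (Throughout I assume, as is implicit in the uniqueness statements of the main theorems, that $\mathcal{X}$ is connected; this hypothesis is genuinely needed, since on a disconnected space one can build average-zero functions with vanishing gradient.) Suppose no such constant $C$ exists. Then for each positive integer $i$ there is a smooth function $\varphi_i$ of average value zero with $\norm{\varphi_i}_2 > i\,\norm{\partial\varphi_i}_2$; after rescaling we may assume $\norm{\varphi_i}_2 = 1$, so that $\norm{\partial\varphi_i}_2 < 1/i \to 0$. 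By the Remark above, $\norm{\nabla\varphi_i}_2 = \sqrt{2}\,\norm{\partial\varphi_i}_2 \to 0$, and hence
\[
\norm{\varphi_i}_{L_1^2}^2 = \norm{\varphi_i}_2^2 + \norm{\nabla\varphi_i}_2^2 = 1 + 2\norm{\partial\varphi_i}_2^2
\]
is bounded independently of $i$. Thus $\{\varphi_i\}$ is a bounded sequence in $L_1^2(\mathcal{X})$.

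Next I would apply Rellich's lemma to pass to a subsequence (still written $\varphi_i$) converging in $L^2(\mathcal{X})$ to some $\varphi$. Since the $L^2$-norm and the average-value functional $\psi \mapsto \bar\psi$ are continuous on $L^2(\mathcal{X})$ (the latter by Cauchy--Schwarz), the limit satisfies $\norm{\varphi}_2 = 1$ and $\bar\varphi = 0$. Moreover $\{\varphi_i\}$ is Cauchy in $L^2$ while $\{\nabla\varphi_i\}$ is Cauchy in $L^2$ (converging to $0$), so $\{\varphi_i\}$ is in fact Cauchy, hence convergent, in $L_1^2(\mathcal{X})$; its limit must coincide with the $L^2$-limit $\varphi$, so $\varphi \in L_1^2(\mathcal{X})$ with $\nabla\varphi = \lim_i \nabla\varphi_i = 0$.

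Finally, a function in $L_1^2(\mathcal{X})$ with vanishing weak gradient is constant on the connected orbifold $\mathcal{X}$: lifting to each connected chart $\widetilde{U}_\beta$ reduces this to the standard Euclidean fact that an $L_1^2$-function with zero weak derivative is a.e.\ constant, and these local constants agree across overlaps because the supports cover the connected space $X$. But a constant of average value zero is identically zero, contradicting $\norm{\varphi}_2 = 1$. This contradiction establishes the desired inequality.

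The hard part is precisely this last step: upgrading $L^2$-convergence to the conclusion that the limit is genuinely constant. One must know that $\nabla\varphi_i \to 0$ in $L^2$ forces the weak gradient of the $L^2$-limit to vanish (the completeness of $L_1^2$, i.e.\ the closedness of the weak-gradient operator), and then that a weakly-constant function on a connected orbifold is constant; these are exactly the points where compactness and connectedness enter essentially. An alternative, more computational route would instead use the Green's function $G_x$ of $\Delta$ constructed above together with the identity
\[
-\int_{\mathcal{X}} \varphi\,\Delta\varphi \cdot \omega^n = \int_{\mathcal{X}} |\partial\varphi|_g^2\,\omega^n,
\]
obtained by applying the integration-by-parts lemma to the $d$-closed form $T = \omega^{n-1}$, reducing the estimate to a bound for the operator norm of $\Delta^{-1}$ on average-zero functions available from Theorem \ref{thm:ellipticisom2}; this works but requires controlling the Green's kernel and is less transparent than the compactness argument, so I would favor the contradiction argument above.
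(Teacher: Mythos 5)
Your proof is correct and takes essentially the same route as the paper: argue by contradiction with a normalized sequence $\norm{\varphi_i}_2 = 1$, $\norm{\partial\varphi_i}_2 \to 0$, extract an $L^2$-convergent subsequence via Rellich's lemma, and show the limit is a constant of average value zero, contradicting $\norm{\varphi}_2 = 1$. The only (minor) divergence is in the last step, where you obtain $\nabla\varphi = 0$ from completeness of $L_1^2(\mathcal{X})$ and a local chart-by-chart argument rather than the paper's weak formulation against $\partial^*\psi$ followed by elliptic regularity; your explicit flagging of the connectedness hypothesis is a point the paper leaves implicit.
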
 

\begin{proof}
For a function $\varphi \in L_1^2(\mathcal{X})$ satisfying $\norm{\varphi}_2 \ne 0$, let $R(\varphi)$ denote the Rayleigh quotient 
\[
R(\varphi) = \frac{\norm{\partial \varphi}_2}{\norm{\varphi}_2}. 
\]
Let $E$ denote the subspace of $L_1^2(\mathcal{X})$ consisting of all functions with average value zero, and let $\lambda$ denote the infimum 
\[
\lambda = \inf_{0 \ne \varphi \in E} R(\varphi).
\]

It suffices to show that $\lambda$ is nonzero, which we show. Suppose not. Let $\varphi_j$ denote a sequence of elements in $E$ satisfying $\lim_{j \to \infty} R(\varphi_j) = 0$. By scaling by $\norm{\varphi_j}_2^{-1}$, we may assume that each $\varphi_j$ satisfies $\norm{\varphi_j}_2 = 1$. It follows that the sequence $\varphi_j$ is uniformly bounded in $L_1^2(\mathcal{X})$. Rellich's lemma implies that, by passing to a subsequence, we can assume that the $\varphi_j$ converge in $L^2(\mathcal{X})$ to a function $\varphi \in L^2(\mathcal{X})$. This function must satisfy $\norm{\varphi}_2 = 1$. Moreover, for any smooth $(1,0)$-form $\psi$, if $\partial^*$ denotes the adjoint of $\partial$ with respect to the $L^2$-inner product induced by the K\"ahler metric $g$, then  
\begin{align*}
|\langle \varphi, \partial^* \psi \rangle_{L^2}| &= \lim_{j \to \infty}  |\langle \varphi_j, \partial^*\psi \rangle_{L^2}| \\
&= \lim_{j \to \infty}  |\langle \partial \varphi_j, \psi \rangle_{L^2}| \\
&\leqslant \lim_{j\to \infty}\norm{\partial \varphi_j}_2 \norm{\psi}_2 = 0
\end{align*}
so that $\partial \varphi = 0$ in the weak sense. The ellipticity of $\partial$ implies that $\varphi$ is actually smooth, and therefore a constant, with average value zero, and hence equal to zero. This contradicts the above deduction that $\norm{\varphi}_2 = 1$.  
\end{proof}

The following Sobolev inequality on bounded domains is well-known \cite{evans10}. 

\begin{lemma}[Local Sobolev inequality]
Let $\Omega$ be a bounded domain in $\mathbb{R}^n$. For a number $p \geqslant 1$, let $q$ denote the Sobolev conjugate satisfying $1/p +  1/q = 1/n$. Then there is a constant $C$ depending on $\Omega$ and $p$ such that for any smooth function $f$ with compact support in $\Omega$, we have 
\[
\norm{f}_{q}^2 \leqslant C(\norm{f}_p^2 + \norm{\nabla f}_p^2).
\]
In particular, if $\Omega \subset \mathbb{C}^n$ and $p = 2$, then $q = 2n/(n-1)$ and 
\[
\norm{f}_{\frac{2n}{n-1}}^2 \leqslant C(\norm{f}_2^2 + \norm{\partial f}_2^2)
\]
\end{lemma}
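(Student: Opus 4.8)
The plan is to reduce the general statement to the borderline case $p = 1$, namely the Gagliardo--Nirenberg inequality $\norm{f}_{n/(n-1)} \leqslant C\norm{\nabla f}_1$, and then to bootstrap from $p = 1$ to arbitrary $1 \leqslant p < n$ by applying the $p=1$ case to a suitable power of $f$. Throughout, $f$ is smooth with compact support in $\Omega$, so there are no boundary contributions and all integrals may be taken over $\mathbb{R}^n$; the dependence of the constant on $\Omega$ is in fact inessential, since the constant we produce depends only on $n$ and $p$, which a fortiori yields the stated $\Omega$-dependence.

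First I would establish the case $p = 1$. For each coordinate direction $i$, the fundamental theorem of calculus gives $|f(x)| \leqslant \int_{\mathbb{R}} |\nabla f| \, dx_i =: g_i(x)$, where $g_i$ is independent of the variable $x_i$. Multiplying these $n$ estimates together and raising to the power $1/(n-1)$ yields $|f(x)|^{n/(n-1)} \leqslant \prod_{i=1}^n g_i(x)^{1/(n-1)}$. The key step is then to integrate this inequality successively in $x_1, \ldots, x_n$, at each stage factoring out the single $g_i^{1/(n-1)}$ that does not depend on the current variable and applying the generalized H\"older inequality with $n-1$ exponents each equal to $n-1$ to the remaining factors. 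After $n$ iterations one obtains $\norm{f}_{n/(n-1)}^{n/(n-1)} \leqslant \prod_i \big(\int_{\mathbb{R}^n} |\nabla f|\big)^{1/(n-1)}$, which rearranges to $\norm{f}_{n/(n-1)} \leqslant \norm{\nabla f}_1$.

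Next I would pass to general $p$. I apply the $p = 1$ inequality to $v = |f|^\gamma$ for an exponent $\gamma > 1$ to be chosen, using $|\nabla v| = \gamma |f|^{\gamma - 1}|\nabla f|$ together with H\"older's inequality for the conjugate exponents $p$ and $p/(p-1)$. This gives $\norm{\,|f|^\gamma}_{n/(n-1)} \leqslant C\gamma \, \norm{\,|f|^{\gamma - 1}}_{p/(p-1)}\norm{\nabla f}_p$. Choosing $\gamma$ so that the exponent $\gamma n/(n-1)$ appearing on the left matches $(\gamma - 1)p/(p-1)$ on the right---explicitly $\gamma = p(n-1)/(n-p)$, which satisfies $\gamma > 1$ precisely when $p > 1$---the powers of $\norm{f}$ on the two sides combine, and after dividing through one arrives at the sharp Sobolev inequality $\norm{f}_q \leqslant C\norm{\nabla f}_p$ with $q = np/(n-p)$ the Sobolev conjugate (so that $1/q = 1/p - 1/n$). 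Squaring and bounding $\norm{\nabla f}_p^2 \leqslant \norm{f}_p^2 + \norm{\nabla f}_p^2$ yields the stated form. The specialization to $\Omega \subset \mathbb{C}^n$ with $p = 2$ then follows by taking the real dimension to be $2n$, whence $q = 2(2n)/(2n-2) = 2n/(n-1)$, and $\norm{\partial f}_2$ and $\norm{\nabla f}_2$ agree up to a factor $\sqrt{2}$ by the remark preceding the statement.

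The main obstacle is the bookkeeping in the $p = 1$ case: carrying out the $n$ successive integrations in the correct order and verifying that the generalized H\"older inequality applies at each stage with exactly the exponents $n-1$. A secondary subtlety is the choice of $\gamma$, which must simultaneously exceed $1$ (so that $v = |f|^\gamma$ is $C^1$ and the product rule is legitimate) and equalize the two exponents; this forces the restriction $1 \leqslant p < n$, outside of which the Sobolev conjugate is no longer positive and a genuinely different argument is required.
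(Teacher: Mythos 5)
The paper offers no proof of this lemma at all: it is stated as well-known with a citation to Evans, so there is nothing internal to compare against. Your argument is precisely the standard Gagliardo--Nirenberg--Sobolev proof from that reference (the $p=1$ case by iterated integration and generalized H\"older, then the bootstrap via $v=|f|^\gamma$ with $\gamma = p(n-1)/(n-p)$), and it is correct, including the observation that the constant in fact depends only on $n$ and $p$ for compactly supported $f$. You also silently and correctly read the paper's relation $1/p+1/q=1/n$ as the usual Sobolev-conjugate relation $1/q = 1/p - 1/n$ (the paper's sign is evidently a typo, as the stated special case $q=2n/(n-1)$ for $p=2$ on $\Omega\subset\mathbb{C}^n$ confirms), and you correctly flag the implicit restriction $p<n$, which for the complex special case amounts to assuming complex dimension at least $2$.
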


It follows that there is a similar type of inequality on compact K\"ahler orbifolds. 

\begin{theorem}[Sobolev inequality]\label{thm:sobolev}
Let $(\mathcal{X},g)$ be a compact K\"ahler orbifold of complex dimension $n$. There is a constant $C$ depending only on $\mathcal{X}$ and $g$ such that if $f$ is a smooth function then 
\[
\norm{f}_{\frac{2n}{n-1}}^2 \leqslant C (\norm{f}_2^2 + \norm{\partial f}_2^2).
\] 
\end{theorem}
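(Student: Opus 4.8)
The plan is to globalize the local Sobolev inequality by means of a partition of unity, exactly as one does for compact manifolds, the only additional care being the passage between the orbifold norms (defined via the Riemannian volume form) and the Euclidean norms appearing in the local statement. Since $\mathcal{X}$ is compact, I would first fix a finite atlas of charts $(\widetilde{U}_\beta, G_\beta, \pi_\beta)$ whose supports $U_\beta$ cover $X$, together with a partition of unity $\psi_\beta$ subordinate to the $U_\beta$. For a smooth function $f$ on $\mathcal{X}$, I write $f = \sum_\beta \psi_\beta f$ and apply the triangle inequality for the $L^{2n/(n-1)}$-norm to reduce to estimating each $\norm{\psi_\beta f}_{2n/(n-1)}$ separately; because there are only finitely many $\beta$, summing the resulting bounds and squaring will produce a single constant $C$ with no loss.

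To estimate a single term, I would pass to the chart $\widetilde{U}_\beta$, where $\psi_\beta f$ is represented by a $G_\beta$-invariant function. Since $\text{supp}(\psi_\beta)$ is a compact subset of $U_\beta$, this representative has compact support in $\widetilde{U}_\beta$, so the local Sobolev inequality applies to it with $p = 2$ and $q = 2n/(n-1)$. Here one uses two comparisons: first, the orbifold integral over the support of $\psi_\beta$ equals $\tfrac{1}{|G_\beta|}$ times the integral over the chart, a harmless constant; second, on the relatively compact chart domain the Riemannian metric $g$ is uniformly comparable to the Euclidean metric, so the intrinsic $L^p$-norms and the quantity $|\partial(\cdot)|_g$ differ from their Euclidean counterparts only by bounded factors. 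As there are finitely many charts, these comparison constants may be taken uniform in $\beta$.

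It then remains to control the right-hand side of the local inequality applied to this representative. Expanding with the Leibniz rule, $\partial(\psi_\beta f) = (\partial \psi_\beta) f + \psi_\beta\, \partial f$, and using that $\psi_\beta$ is smooth of compact support so that $|\psi_\beta| \leq 1$ and $|\partial \psi_\beta|_g$ is bounded by a constant $M_\beta$, I obtain $\norm{\partial(\psi_\beta f)}_2 \leq M_\beta \norm{f}_2 + \norm{\partial f}_2$, while trivially $\norm{\psi_\beta f}_2 \leq \norm{f}_2$. Hence each term satisfies $\norm{\psi_\beta f}_{2n/(n-1)}^2 \leq C_\beta(\norm{f}_2^2 + \norm{\partial f}_2^2)$. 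Summing over the finitely many $\beta$ and squaring the triangle-inequality bound yields the claimed estimate with $C = \left(\sum_\beta C_\beta^{1/2}\right)^2$.

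I would expect no genuinely deep obstacle here: the argument is a routine partition-of-unity globalization. The one point deserving care is the norm comparison in the second paragraph, namely verifying that the Euclidean local inequality transfers to the intrinsic Riemannian quantities uniformly over the finitely many charts; once this uniform comparability is in hand, the remainder is bookkeeping with the product rule and the triangle inequality.
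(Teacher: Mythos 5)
Your proposal is correct and follows essentially the same route as the paper's proof: a finite cover by orbifold charts, a subordinate partition of unity, the local Sobolev inequality applied to each $\psi_\beta f$, the Leibniz rule to absorb $\partial(\psi_\beta f)$ into $\norm{f}_2$ and $\norm{\partial f}_2$, and a finite sum at the end. Your explicit attention to the $\tfrac{1}{|G_\beta|}$ factor, the uniform comparability of $g$ with the Euclidean metric on the chart domains, and the Cauchy--Schwarz step when squaring the triangle inequality are details the paper passes over more quickly, but the argument is the same.
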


\begin{proof}
Let $\varphi_\alpha$ be a partition of unity subordinate to the supports $U_\alpha$ of a finite collection of orbifold charts $\widetilde{U}_\alpha$ in an atlas.  The smooth function $\varphi_\alpha f$ is compactly supported in the support $U_\alpha$ of a chart $\widetilde{U}_\alpha$. The local Sobolev inequality in this chart $\widetilde{U}_\alpha$ implies the existence of a constant $C_\alpha$ such that 
\[
\norm{\varphi_\alpha f}_{\frac{2n}{n-1}}^2 \leqslant C_\alpha (\norm{\varphi_\alpha f}_2^2 + \norm{\partial (\varphi_\alpha f)}_2^2). 
\]
The triangle inequality implies that 
\[
|\partial (\varphi_\alpha f)| \leqslant |(\partial \varphi_\alpha)||f| + \varphi_\alpha |\partial f| \leqslant \left(\sup_{X} |\partial \varphi_\alpha| \right) |f| + |\partial f|. 
\]
Hence with the previous observation, we find an estimate of the form 
\[
\norm{\varphi_\alpha f}_{\frac{2n}{n-1}}^2 \leqslant C_\alpha (\norm{f}_2^2 + \norm{\partial  f}_2^2).
\]
Whence if $N$ denotes the number of charts and $C = N \cdot \sup_\alpha C_\alpha$, then 
\[
\norm{f}_{\frac{2n}{n-1}}^2 \leqslant \sum_\alpha \norm{\varphi_\alpha f}_{\frac{2n}{n-1}}^2 \leqslant C (\norm{f}_2^2 + \norm{\partial f}_2^2),
\]
as desired. 
\end{proof}

\section{Yau's theorem on orbifolds}\label{sec:main}

The goal of this section is to outline a proof of Theorem \ref{thm:main}. Before doing so, let us first demonstrate how Theorem \ref{thm:main} implies Theorem \ref{thm:calabi}. 

\begin{lemma}
Theorem \ref{thm:main} implies Theorem \ref{thm:calabi}. 
\end{lemma}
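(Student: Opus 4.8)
The plan is to show that Theorem \ref{thm:main} (solvability of the Monge-Ampère equation) yields both existence and uniqueness of the Kähler form $\omega'$ satisfying conditions (i) and (ii) of Theorem \ref{thm:calabi}. The strategy is to translate the Ricci-form condition into a scalar equation of Monge-Ampère type, apply Theorem \ref{thm:main}, and then verify that the resulting form has the desired properties.

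\textbf{Existence.} First I would observe that since $R$ and $\mathrm{Ric}(\omega)$ both represent $2\pi c_1(\mathcal{X})$, they lie in the same cohomology class, and both are real $(1,1)$-forms. The $\ddb$-lemma then produces a smooth function $f$ with $R = \mathrm{Ric}(\omega) + \sqrt{-1}\ddb f$. Next I would seek a Kähler form in the class of $\omega$ of the form $\omega' = \omega + \sqrt{-1}\ddb\varphi$, and compute how its Ricci form depends on $\varphi$. Using the local expression $\mathrm{Ric}(\omega) = -\sqrt{-1}\ddb\log\det(g_{j\bar k})$ and writing $(\omega')^n = e^{G}\omega^n$ for the smooth function $G$ determined by the ratio of volume forms, one finds the key identity
\[
\mathrm{Ric}(\omega') = \mathrm{Ric}(\omega) - \sqrt{-1}\ddb G.
\]
Thus demanding $\mathrm{Ric}(\omega') = R = \mathrm{Ric}(\omega) + \sqrt{-1}\ddb f$ amounts to requiring $\sqrt{-1}\ddb G = -\sqrt{-1}\ddb f$, i.e. $G = -f + c$ for some constant $c$. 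Setting $F = -f + c$, the problem reduces to solving $(\omega + \sqrt{-1}\ddb\varphi)^n = e^{F}\omega^n$ with $\omega + \sqrt{-1}\ddb\varphi$ positive.

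\textbf{Choosing the constant.} The constant $c$ is not free: Theorem \ref{thm:main} requires the normalization $\int_{\mathcal{X}} e^F\omega^n = \int_{\mathcal{X}}\omega^n$. So I would fix $c$ precisely so that $e^{-f+c}$ integrates against $\omega^n$ to give $\int_{\mathcal{X}}\omega^n$; explicitly, $e^{c} = \left(\int_{\mathcal{X}}\omega^n\right)\big/\left(\int_{\mathcal{X}}e^{-f}\omega^n\right)$, which is positive since $\omega^n$ is a positive volume form. With this choice $F$ satisfies the hypothesis of Theorem \ref{thm:main}, which furnishes a smooth $\varphi$ with $\omega' := \omega + \sqrt{-1}\ddb\varphi$ positive (hence Kähler, being closed and positive) and cohomologous to $\omega$, establishing (i). Reversing the computation above gives $\mathrm{Ric}(\omega') = R$, establishing (ii).

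\textbf{Uniqueness.} For uniqueness, suppose $\omega'$ and $\omega''$ both satisfy (i) and (ii). Since both are cohomologous to $\omega$, the $\ddb$-lemma writes $\omega'' = \omega' + \sqrt{-1}\ddb\psi$ for some smooth $\psi$. Since both have Ricci form $R$, the identity $\mathrm{Ric}(\omega'') - \mathrm{Ric}(\omega') = -\sqrt{-1}\ddb\log\big((\omega'')^n/(\omega')^n\big)$ forces $(\omega'')^n = \kappa(\omega')^n$ for a positive constant $\kappa$; integrating and using that $\omega', \omega''$ are cohomologous (so have equal total volume $\int(\omega')^n = \int(\omega'')^n$) gives $\kappa = 1$. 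Hence $(\omega' + \sqrt{-1}\ddb\psi)^n = (\omega')^n$, which is the Monge-Ampère equation \eqref{eqn:MA} relative to the base form $\omega'$ with $F \equiv 0$; the uniqueness clause of Theorem \ref{thm:main} then forces $\psi$ to be constant, so $\omega'' = \omega'$. \emph{The main obstacle} I anticipate is bookkeeping the sign conventions and the precise normalization of the additive constant $c$ so that the integral hypothesis of Theorem \ref{thm:main} is met exactly; the geometric translation itself is routine once the identity $\mathrm{Ric}(\omega') = \mathrm{Ric}(\omega) - \sqrt{-1}\ddb\log\big((\omega')^n/\omega^n\big)$ is in hand.
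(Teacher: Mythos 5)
Your proposal is correct and follows essentially the same route as the paper: use the $\ddb$-lemma to reduce condition (ii) to the Monge--Amp\`ere equation, normalize the additive constant so the integral hypothesis of Theorem \ref{thm:main} holds, and derive uniqueness from the uniqueness clause of that theorem. The only cosmetic difference is in the uniqueness step, where you compare two solutions against the base form $\omega'$ with $F\equiv 0$ while the paper compares both against the original $\omega$ and its normalized $F$; both reductions are valid and rest on the same volume-normalization argument.
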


\begin{proof}
Because $R$ and $\text{Ric}(\omega)$ represent the same cohomology class, the $\ddb$-lemma gives a smooth function $\tilde{F}$ on $\mathcal{X}$ such that 
\[
R = -\sqrt{-1}\ddb \tilde{F} + \text{Ric}(\omega).
\]
If we let $C_1,C_2$ denote the positive quantities 
\[
C_1 = \int_\mathcal{X} e^{\tilde{F}} \omega^n \hspace{10mm} C_2 = \int_\mathcal{X} \omega^n
\] 
and we let $F$ denote the smooth function $F = \tilde{F} + \log(C_2/C_1)$, then $F$ satisfies the integration condition
\[
\int_\mathcal{X} e^{F}\omega^n  \int_\mathcal{X} \omega^n.
\]
By Theorem \ref{thm:main}, there is a smooth function $\varphi$ satisfying \eqref{eqn:MA}. 
The Ricci form of $\omega' = \omega + \sqrt{-1}\ddb \varphi$ then satisfies 
\[
\text{Ric}(\omega') = -\sqrt{-1} \ddb \log (e^F \omega^n) = -\sqrt{-1} \ddb F + \text{Ric}(\omega) = R. 
\]
Hence $\omega' = \omega + \sqrt{-1}\ddb \varphi$ is a solution to the Calabi conjecture.

Suppose that $\omega''$ is another solution. There is a $\varphi''$ such that $\omega'' = \omega + \sqrt{-1}\ddb \varphi''$. By assumption, the Ricci form of $\omega''$ satisfies 
\[
\text{Ric}(\omega'') = R = -\sqrt{-1}\ddb F + \text{Ric}(\omega),
\]
or equivalently, 
\[
-\sqrt{-1}\ddb(\log(\omega + \sqrt{-1}\ddb \varphi'')^n) = -\sqrt{-1}\ddb F - \sqrt{-1}\ddb\log(\omega^n).
\]
Rearranging gives 
\[
\sqrt{-1}\ddb\left(\log\frac{(\omega + \sqrt{-1}\ddb\varphi')^n}{\omega^n} - F\right) = 0.
\]
Integration by parts shows that there is a constant $C$ such that 
\[
\log\frac{(\omega + \sqrt{-1}\ddb\varphi'')^n}{\omega^n} - F = C, 
\]
 which implies that 
\[
(\omega + \sqrt{-1}\ddb\varphi'')^n = e^{F+C} \omega^n. 
\]
The fact that $\int_\mathcal{X} e^F \omega^n = \int_\mathcal{X} \omega^n$ implies that  
\[
e^C \int_\mathcal{X} e^{F} \omega^n = \int_\mathcal{X} e^{F+C}\omega^n = \int_\mathcal{X} \omega^n = \int_\mathcal{X} e^F \omega^n.
\]
Hence $C = 0$ and we conclude that $\varphi''$ is a solution to Theorem \ref{thm:main}. Thus $\varphi''$ and $\varphi'$ differ by a constant, and so $\omega''= \omega'$. This shows how the uniqueness in Theorem \ref{thm:calabi} follows from that of Theorem \ref{thm:main}. 
\end{proof}

Let us now move on to a proof of Theorem \ref{thm:main}.  First we deal with uniqueness. 

\begin{proposition}
If $\varphi, \varphi'$ are two smooth solutions to \eqref{eqn:MA}, then $\varphi$ and $\varphi'$ differ by a constant. 
\end{proposition}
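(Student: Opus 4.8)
The plan is to prove uniqueness by a maximum-principle / integration-by-parts argument exploiting the concavity structure of the Monge-Amp\`ere operator. Suppose $\varphi$ and $\varphi'$ both solve \eqref{eqn:MA}, so that
\[
(\omega + \sqrt{-1}\ddb\varphi)^n = e^F\omega^n = (\omega + \sqrt{-1}\ddb\varphi')^n.
\]
Writing $\omega_\varphi = \omega + \sqrt{-1}\ddb\varphi$ and $\omega_{\varphi'} = \omega + \sqrt{-1}\ddb\varphi'$, both are positive $(1,1)$-forms with equal top powers, and setting $u = \varphi - \varphi'$ we have $\sqrt{-1}\ddb u = \omega_\varphi - \omega_{\varphi'}$. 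The goal is to show $u$ is constant.

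First I would exploit the algebraic identity $a^n - b^n = (a-b)(a^{n-1} + a^{n-2}b + \cdots + b^{n-1})$ in the form appropriate to $(1,1)$-forms. Since $\omega_\varphi^n = \omega_{\varphi'}^n$, subtracting gives
\[
0 = \omega_\varphi^n - \omega_{\varphi'}^n = (\omega_\varphi - \omega_{\varphi'}) \wedge \sum_{k=0}^{n-1} \omega_\varphi^{k} \wedge \omega_{\varphi'}^{n-1-k} = \sqrt{-1}\ddb u \wedge T,
\]
where $T = \sum_{k=0}^{n-1} \omega_\varphi^{k} \wedge \omega_{\varphi'}^{n-1-k}$ is a positive $(n-1,n-1)$-form. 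The next step is to multiply this identity by $u$ and integrate over $\mathcal{X}$. Because each $\omega_\varphi$ and $\omega_{\varphi'}$ is $d$-closed, the form $T$ is $d$-closed, so the integration-by-parts lemma stated earlier applies and yields
\[
0 = \int_\mathcal{X} u \cdot \sqrt{-1}\ddb u \wedge T = -\int_\mathcal{X} \sqrt{-1}\,\partial u \wedge \bar{\partial} u \wedge T.
\]

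Since $T$ is a positive $(n-1,n-1)$-form, the integrand $\sqrt{-1}\,\partial u \wedge \bar{\partial} u \wedge T$ is a nonnegative $(n,n)$-form, and its integral vanishes; by the positivity remark in Section \ref{sec:prelim} this forces the integrand to vanish identically. Because $T$ is strictly positive (each summand involves genuine K\"ahler forms), the vanishing of $\sqrt{-1}\,\partial u \wedge \bar{\partial} u \wedge T$ at every point forces $\partial u = 0$, hence $du = 0$ for the real function $u$, so $u$ is constant and $\varphi - \varphi'$ is a constant, as claimed. The main obstacle I anticipate is justifying that the pointwise nonnegativity of $\sqrt{-1}\,\partial u \wedge \bar{\partial} u \wedge T$ together with strict positivity of $T$ actually forces $\partial u = 0$ rather than merely controlling it; this requires verifying in local charts that for a strictly positive $(n-1,n-1)$-form the quadratic expression in $\partial u$ is positive definite, which amounts to a pointwise linear-algebra computation with the matrix associated to $T$. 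The orbifold aspect introduces no genuine difficulty here, since all the relevant identities are local, $G_\alpha$-invariant, and the integration-by-parts and positivity statements have already been established on orbifolds.
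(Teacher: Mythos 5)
Your proposal is correct and follows essentially the same route as the paper: the telescoping factorization $\omega_\varphi^n - \omega_{\varphi'}^n = \sqrt{-1}\ddb(\varphi-\varphi')\wedge T$ with $T = \sum_{k=0}^{n-1}\omega_\varphi^k\wedge\omega_{\varphi'}^{n-1-k}$, multiplication by $\varphi-\varphi'$, integration by parts against the closed positive form $T$, and the positivity of $T$ forcing $\partial(\varphi-\varphi')=0$. The pointwise linear-algebra check you flag at the end is a reasonable thing to verify, but it is the same positivity argument the paper invokes.
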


\begin{proof}
Write $\omega_{\varphi} = \omega + \sqrt{-1} \ddb\varphi$. Then with this notation, we have
\[
0 = \omega_{\varphi}^n - \omega_{\varphi'}^n = \sqrt{-1}\ddb(\varphi - \varphi') \wedge T
\]
where 
\[
T = \sum_{k=0}^{n-1} \omega_{\varphi}^k \wedge \omega_{\varphi'}^{n-1-k}
\]
is a positive, closed $(n-1,n-1)$ form. Upon integrating by parts, we find 
\[
0 = \int_\mathcal{X} (\varphi - \varphi') (\omega_{\varphi}^n - \omega_{\varphi'}^n) = -  \int_{\mathcal{X}} \sqrt{-1} \partial(\varphi - \varphi') \wedge \bar{\partial}(\varphi - \varphi') \wedge T.
\]
The positivity of $T$ implies that the integral is nonnegative, and hence we must have $\partial(\varphi - \varphi') = 0$. Thus $\varphi - \varphi'$ is constant. 
\end{proof}

With the tools established in the previous sections, we can formulate a proof of Theorem \ref{thm:main} by following exactly the structure of a proof in the smooth setting. In particular, one approach is the following well-known continuity method. For completeness, we outline this approach now. 

The idea is to introduce a family of equations 
\begin{align}\label{eqn:*_t}
\begin{cases}
(\omega + \sqrt{-1}\ddb \varphi)^n = e^{tF} \omega^n \\
\omega + \sqrt{-1} \ddb \varphi \; \text{is a K\"ahler form}
\end{cases} \tag{$*_t$}
\end{align}
indexed by a parameter $t \in [0,1]$. The equation $(*_0)$ admits the trivial solution $\varphi \equiv 0$. Thus, if we can show that the set of such $t \in [0,1]$ for which $\eqref{eqn:*_t}$ admits a smooth solution is both open and closed, it will follow that we can solve $(*_1)$. For this endeavor, it suffices to prove the following 

\begin{proposition}\label{prop:suff} Fix an $\alpha \in (0,1)$.
\item[(i)] If \eqref{eqn:*_t} admits a smooth solution for some $t < 1$, then for all sufficiently small $\epsilon > 0$, the equation $(*_{t + \epsilon})$ admits a smooth solution as well. 
\item[(ii)] There is a constant $C > 0$ depending only on $\mathcal{X}, \omega, F,$ and $\alpha$ such that if $\varphi$ with average value zero satisfies \eqref{eqn:*_t} for some $t \in [0,1]$, then 
\begin{itemize}
\item $\norm{\varphi}_{C^{3,\alpha}(\mathcal{X})} \leqslant C$ and
\item $(g_{j\bar{k}} + \partial_j \partial_{\bar{k}}\varphi) > C^{-1} (g_{j\bar{k}})$, where $g_{j\bar{k}}$ are the components of $\omega$ in local coordinates of any chart and the inequality means that the difference of matrices is positive definite. 
\end{itemize}
\end{proposition}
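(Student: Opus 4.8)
The plan is to handle the two parts by entirely different mechanisms: part (i) is a soft application of the inverse function theorem in Banach spaces, resting on the isomorphism theory of Section~\ref{sec:elliptic}, while part (ii) is the hard analytic heart of the argument, a chain of a priori estimates run in the uniformizing charts.

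For the openness statement (i), fix a smooth solution $\varphi_t$ of \eqref{eqn:*_t} and set $\omega_t = \omega + \sqrt{-1}\ddb\varphi_t$. Consider the nonlinear operator
\[
\mathcal{M}(\psi) = \log\frac{(\omega + \sqrt{-1}\ddb\psi)^n}{\omega^n},
\]
defined on the open subset of $\psi \in C^{k+2,\alpha}(\mathcal{X})$ for which $\omega + \sqrt{-1}\ddb\psi$ is positive. Differentiating under the determinant and using the identity $n\sqrt{-1}\ddb u\wedge\omega_t^{n-1} = (\Delta_{\omega_t}u)\,\omega_t^n$ from Section~\ref{sec:prelim}, one finds that the linearization of $\mathcal{M}$ at $\varphi_t$ is exactly the complex Laplacian $\Delta_{\omega_t}$ of the metric $\omega_t$. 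By Theorem~\ref{thm:ellipticisom}, this elliptic operator restricts to an isomorphism from the mean-zero functions in $C^{k+2,\alpha}(\mathcal{X})$ onto the mean-zero functions in $C^{k,\alpha}(\mathcal{X})$, its kernel and cokernel both consisting of the constants. Since the image of $\mathcal{M}$ is constrained to the hypersurface $\{\,f : \int_\mathcal{X}e^{f}\omega^n = \int_\mathcal{X}\omega^n\,\}$, whose tangent space at $\mathcal{M}(\varphi_t)$ is precisely the mean-zero functions (with respect to $\omega_t^n$), the inverse function theorem produces, for every $s$ near $t$ and a suitable normalizing constant $c_s$, a potential $\varphi_s \in C^{k+2,\alpha}(\mathcal{X})$ solving $\mathcal{M}(\varphi_s) = sF + c_s$. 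A final bootstrap using the Schauder estimate of Theorem~\ref{thm:ellipticest} --- the equation is uniformly elliptic near $\varphi_t$ with smooth data --- shows $\varphi_s$ is smooth. This gives a solution of $(*_s)$ for all $s$ in a neighborhood of $t$, and in particular for $s = t+\epsilon$.

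For the estimates in (ii), the strategy is the classical four-step sequence, with the observation that every step is either an integral inequality already available on $\mathcal{X}$ or a pointwise maximum-principle computation that may be carried out on a uniformizing chart: a $G_\alpha$-invariant function attains its extremum at a point whose lift to the chart is a genuine extremum, so the maximum principle applies after lifting. First, the $C^0$ estimate: testing \eqref{eqn:*_t} against suitable powers of $\varphi$, integrating by parts, and iterating the resulting inequalities through the Sobolev inequality (Theorem~\ref{thm:sobolev}) --- a Moser iteration --- bounds $\norm{\varphi}_{C^0(\mathcal{X})}$, using the normalization $\int_\mathcal{X}\varphi\,\omega^n = 0$ and the Poincar\'e inequality (Theorem~\ref{thm:poincare}) to control the low-order norms that seed the iteration. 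Second, the $C^2$ estimate: applying the maximum principle to $\log\operatorname{tr}_\omega\omega_\varphi - A\varphi$, with $A$ chosen from a lower bound on the bisectional curvature of $\omega$, and using the equation as $\det(g_{j\bar k} + \partial_j\partial_{\bar k}\varphi) = e^{tF}\det(g_{j\bar k})$, bounds $\operatorname{tr}_\omega\omega_\varphi = n + \Delta\varphi$ from above; since the determinant $e^{tF}$ is pinched between two positive constants, the eigenvalues of $\omega_\varphi$ relative to $\omega$ are pinched as well, which yields simultaneously the second bullet (the uniform ellipticity $(g_{j\bar k} + \partial_j\partial_{\bar k}\varphi) > C^{-1}(g_{j\bar k})$) and a bound on the complex Hessian. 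Third, Calabi's third-order estimate: the maximum principle applied to the Calabi quantity $S \sim |\nabla_\omega\omega_\varphi|^2$, together with the $C^2$ bound, gives a uniform $C^{0,1}$ bound on the complex Hessian, so that $\omega_\varphi$ is uniformly Lipschitz. Finally, differentiating \eqref{eqn:*_t} once produces a linear uniformly elliptic equation for each $\partial_m\varphi$ whose coefficients $g_\varphi^{j\bar k}$ and right-hand side now lie in $C^{0,\alpha}$; the Schauder estimate of Theorem~\ref{thm:ellipticest} promotes this to $\norm{\partial_m\varphi}_{C^{2,\alpha}} \le C$, i.e.\ $\norm{\varphi}_{C^{3,\alpha}(\mathcal{X})} \le C$, with all constants depending only on $\mathcal{X},\omega,F,\alpha$ and not on $t$.

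I expect the second-order estimate to be the main obstacle. The $C^0$ step is an integral argument that uses only the orbifold Sobolev and Poincar\'e inequalities already proved, and the $C^3$ and bootstrap steps are, respectively, a local pointwise computation and a direct appeal to the Schauder estimate. The $C^2$ step is where the geometry genuinely enters: one must select the auxiliary function and the constant $A$ correctly, control the curvature of the background metric, and it is this step that delivers the uniform ellipticity without which the equation would degenerate somewhere along the continuity path. The only orbifold-specific care, present throughout (ii), is the verification that these extremal computations descend correctly from the charts; since the quantities involved are assembled from $G_\alpha$-invariant tensors, this descent is automatic and the manifold computations transfer verbatim.
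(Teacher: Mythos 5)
Your proposal follows essentially the same route as the paper: part (i) via the implicit/inverse function theorem with linearization $\Delta_{\omega_t}$ and the elliptic isomorphism of Theorem~\ref{thm:ellipticisom} on mean-zero H\"older spaces, and part (ii) via the standard chain of Moser-iteration $C^0$ bound, maximum-principle second-order estimate on $\log\operatorname{tr}_\omega\omega_\varphi - A\varphi$, Calabi third-order estimate, and Schauder bootstrap, all carried out in invariant uniformizing charts. Your explicit inclusion of the normalizing constant $c_s$ in the openness step is if anything slightly more careful than the paper's formulation, but the substance is identical.
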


Indeed Proposition \ref{prop:suff} is sufficient because we can obtain a solution to $(*_1)$ using the following lemma. 

\begin{lemma}
Assume Proposition \ref{prop:suff}. Then if $s$ is a number in $(0,1]$ such that we can solve $\eqref{eqn:*_t}$ for all $t < s$, then we can solve $(*_s)$. 
\end{lemma}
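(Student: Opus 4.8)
The plan is to run the standard closedness step of the continuity method: realize a solution of $(*_s)$ as a limit of solutions of $(*_{t_j})$ for a sequence $t_j \nearrow s$, using the uniform a priori bounds of Proposition \ref{prop:suff}(ii) to guarantee both compactness of the family and preservation of positivity in the limit, and then bootstrap the limiting function to smoothness via the orbifold Schauder theory of Theorem \ref{thm:ellipticest}.

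First I would choose a sequence $t_j \in [0,s)$ with $t_j \to s$. By hypothesis each $(*_{t_j})$ admits a smooth solution, which I normalize to have average value zero; call it $\varphi_j$. Proposition \ref{prop:suff}(ii) supplies a constant $C$, independent of $j$, with $\norm{\varphi_j}_{C^{3,\alpha}(\mathcal{X})} \leqslant C$ and with $(g_{j\bar k} + \partial_j\partial_{\bar k}\varphi_j) > C^{-1}(g_{j\bar k})$ in every chart. Working in the finite atlas covering the compact orbifold $\mathcal{X}$, the uniform $C^{3,\alpha}$-bound together with Arzela-Ascoli (the inclusion $C^{3,\alpha} \hookrightarrow C^{3}$ is compact on each relatively compact chart domain) lets me pass to a subsequence converging in $C^{3}(\mathcal{X})$ to a limit $\varphi \in C^{3,\alpha}(\mathcal{X})$ of average value zero.

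Next I would verify that $\varphi$ solves $(*_s)$. Passing to the limit in $(\omega + \sqrt{-1}\ddb\varphi_j)^n = e^{t_j F}\omega^n$, using the $C^2$-convergence of the $\varphi_j$ and $t_j \to s$, gives $(\omega + \sqrt{-1}\ddb\varphi)^n = e^{sF}\omega^n$. The uniform lower bound $(g_{j\bar k} + \partial_j\partial_{\bar k}\varphi_j) > C^{-1}(g_{j\bar k})$ is a closed condition that survives the limit, so $\omega + \sqrt{-1}\ddb\varphi$ is again positive; hence $\varphi$ is a $C^{3,\alpha}$ solution of $(*_s)$.

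Finally, to promote $\varphi$ to a smooth solution I would bootstrap. Differentiating the equation written as $\log\det(g_{j\bar k} + \partial_j\partial_{\bar k}\varphi) = sF + \log\det(g_{j\bar k})$ along a coordinate direction $\partial_\ell$ in a chart yields a linear equation $L(\partial_\ell\varphi) = h$, where $L = a^{j\bar k}\partial_j\partial_{\bar k}$ with $a^{j\bar k}$ the inverse of $g_{j\bar k} + \partial_j\partial_{\bar k}\varphi$ and $h$ built from $F$, $g$, and $a^{j\bar k}$. The matrix lower bound makes $L$ uniformly elliptic, and its coefficients lie in $C^{1,\alpha}$; applying Theorem \ref{thm:ellipticest} with $k=1$ gives $\partial_\ell\varphi \in C^{3,\alpha}$, so $\varphi \in C^{4,\alpha}$. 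Feeding this improved regularity back in, the coefficients and the right-hand side gain a derivative at each stage, and induction yields $\varphi \in C^{k,\alpha}$ for every $k$, i.e. $\varphi \in C^\infty$. I expect the bootstrap to be the main technical point: one must check that each differentiated equation is genuinely uniformly elliptic---this is exactly where the matrix lower bound of Proposition \ref{prop:suff}(ii) is used---and that the $G_\alpha$-invariance of the local data lets Theorem \ref{thm:ellipticest} be applied chart-by-chart so that the smoothing is consistent across the orbifold atlas.
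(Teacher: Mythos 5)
Your proposal is correct and follows essentially the same route as the paper: extract a convergent subsequence from the uniformly $C^{3,\alpha}$-bounded family via Arzela--Ascoli, pass to the limit in the equation and in the positivity condition, and bootstrap to smoothness by differentiating the Monge--Amp\`ere equation and applying the orbifold Schauder estimates of Theorem \ref{thm:ellipticest}. The only cosmetic difference is that you converge in $C^{3}$ and recover the limit in $C^{3,\alpha}$, whereas the paper converges in $C^{3,\alpha'}$ for some $\alpha' < \alpha$; both are standard and equivalent for the purpose at hand.
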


\begin{proof}
Let $t_i \in (0,1]$ be a sequence of numbers approaching $s$ from below. By assumption, this gives rise to a sequence of functions $\varphi_i$ satisfying 
\[
(\omega + \sqrt{-1}\ddb \varphi_i)^n = e^{t_iF} \omega^n.
\]
Proposition \ref{prop:suff} together with the Arzela-Ascoli theorem implies that after passing to a subsequence, we may assume that $\varphi_i$ converges in $C^{3,\alpha'}(\mathcal{X})$ to a function $\varphi$ for some $\alpha' < \alpha$. This convergence is strong enough that we find 
\[
(\omega + \sqrt{-1} \ddb \varphi)^n = e^{sF} \omega^n.
\]
Moreover, Proposition \ref{prop:suff} gives that the forms $(\omega + \sqrt{-1}\ddb \varphi_i)$ are bounded below by a fixed positive form $C^{-1} \omega$, so that $\omega + \sqrt{-1} \ddb \varphi$ is a positive form. 

It remains to show that $\varphi$ is smooth. In local coordinates, we find that $\varphi$ satisfies 
\[
\log \det(g_{b\bar{k}} + \partial_j \partial_{\bar{k}}\varphi) - \log \det (g_{j \bar{k}}) - sF = 0.
\]
Differentiating the equation with respect to the variable $z^\ell$ we have 
\[
(g_\varphi)^{j\bar{k}} \partial_j \partial_{\bar{k}} (\partial_\ell \varphi) = s \partial_\ell F +  \partial_{\ell} \log \det (g_{j\bar{k}}) - (g_\varphi)^{j\bar{k}} \partial_\ell g_{j \bar{k}}
\]
where $(g_\varphi)^{j\bar{k}}$ is the inverse of the matrix $(g_\varphi)_{j\bar{k}} = g_{j\bar{k}} + \partial_j \partial_{\bar{k}} \varphi$. We think of this equation as a linear elliptic second-order equation $L(\partial_{\ell}\varphi) = h$ for the function $\partial_{\ell}\varphi \in C^{2,\alpha'}(\mathcal{X})$. Because the function $h$ and the coefficients of $L$ belong to $C^{1, \alpha'}$, we conclude from Theorem \ref{thm:ellipticest} that $\partial_\ell\varphi$ belongs to $C^{3,\alpha'}$. Because $\ell$ was arbitrary, it follows that $\varphi$ belongs to $C^{4,\alpha'}$. Repeating this argument we obtain that $\varphi \in C^{5,\alpha'}$ and by induction, that $\varphi$ is actually smooth. This technique of considering the corresponding linear equation to obtain better regularity of solutions is called bootstrapping. 
\end{proof}

Let us now prove the first part of Proposition \ref{prop:suff}. 

\medskip 

\noindent \emph{Proof of Proposition \ref{prop:suff} (i)}. Let $B_1$ denote the Banach manifold consisting of those $\varphi \in C^{3,\alpha}(\mathcal{X})$ with average value zero and such that $\omega + \sqrt{-1} \ddb \varphi$ is a positive form. Let $B_2$ denote the Banach space consisting of those $\varphi \in C^{1,\alpha}(\mathcal{X})$ with average value zero. Define a mapping 
\begin{align*}
G : B_1 \times [0,1] &\longrightarrow B_2  \\
(\varphi, t) &\longmapsto \log \frac{(\omega + \sqrt{-1} \ddb \varphi)^n}{\omega^n} - tF.
\end{align*}
By assumption, we are given a smooth function $\varphi_t$ such that $G(\varphi_t, t) = 0$ and $\omega + \sqrt{-1} \ddb \varphi_t$ is a K\"ahler form. The partial derivative of $G$ in the direction of $\varphi$ at the point $(\varphi_t, t)$ is given by 
\[
DG_{(\varphi_t, t)}(\psi,0) = \frac{n \sqrt{-1} \ddb \psi \wedge \omega_t^{n-1}}{\omega_t^n} = \Delta_t \psi,
\]
where $\omega_t = \omega + \sqrt{-1} \ddb \varphi_t$ and $\Delta_t$ denotes the Laplacian with respect to $\omega_t$. Denote this partial derivative by the operator $L(\psi) = \Delta_t\psi$. 

The operator $L$ has trivial kernel. Indeed suppose $\psi$ satisfies $L(\psi) = 0$. Then integration by parts shows that 
\[
\int_{\mathcal{X}} \psi \Delta_t \psi \omega_t^n = - \int_{\mathcal{X}}  \sqrt{-1} \partial \psi \wedge \bar{\partial} \psi \wedge \omega_t^{n-1}
\]
and the positivity of $\omega_t^{n-1}$ implies that $\partial \psi = 0$. We conclude that $\psi$ is a constant, with average value zero, and hence equal to zero. 

Moreover two integrations by parts show that the operator $L$ is self-adjoint, and hence $L^*$ has trivial kernel as well. It follows from Theorem \ref{thm:ellipticisom} that $L$ is an isomorphism 
\[
L : C_0^{3,\alpha}(\mathcal{X}) \to C_0^{1,\alpha}(\mathcal{X})
\]
where $C_0^{k,\alpha}(\mathcal{X})$ denotes the subspace of functions in $C^{k,\alpha}(\mathcal{X})$ with average value zero. The implicit function theorem asserts that for $s$ sufficiently close to $t$, there are functions $\varphi_s$ in $C^{3,\alpha}_0(\mathcal{X})$ satisfying $G(\varphi_s, s) = 0$. Because $\varphi + \sqrt{-1} \ddb \varphi_t$ is a positive form, for $s$ close enough to $t$, we can ensure also that $\omega + \sqrt{-1} \ddb \varphi_s$ is also a positive form. Moreover, bootstrapping arguments similar to those described earlier show that $\varphi_s$ is actually smooth. \hfill $\Box$

\section{A uniform $C^{3,\alpha}$-estimate}\label{sec:est}

This section is devoted to proving Proposition \ref{prop:suff} (ii). There are many expositions of this statement in the smooth setting (see \cite{yau78, siu12, tian12, blocki12, szekelyhidi14}). Essentially any of these arguments can be modified to the orbifold setting, provided the necessary ingredients can be modified to the orbifold setting. We will outline a streamlined version of one of the arguments, which can be found in \cite{szekelyhidi14}, and we direct the reader to this resource for more details of the reasoning to follow. 

First we obtain a $C^0$-estimate using a method of Moser iteration. The argument in the smooth setting can be found in \cite{szekelyhidi14}, which follows an exposition due to \cite{blocki12}. For completeness, we outline the argument below, demonstrating how the tools of the previous sections (Green's function, Poincare inequality, Sobolev inequality) are used.  

\begin{lemma}[$C^0$-estimate]\label{lem:C^0}
There is a constant $C$ depending on $\mathcal{X}, \omega,$ and $F$ such that if $\varphi$ is a solution to \eqref{eqn:*_t} with average value zero, then 
\[
\norm{\varphi}_{C^0(\mathcal{X})} \leqslant C.
\]
\end{lemma}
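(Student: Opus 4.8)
The plan is to run a Moser iteration in the spirit of Błocki, as organized in \cite{szekelyhidi14}; the point is that every analytic tool it requires---the Green's function of $\Delta$, the Poincar\'e inequality (Theorem~\ref{thm:poincare}), the Sobolev inequality (Theorem~\ref{thm:sobolev}), and integration by parts---is now available in the orbifold category. Two structural facts organize everything. Writing $\omega_\varphi = \omega + \sqrt{-1}\ddb\varphi$, positivity of $\omega_\varphi$ gives $\Delta\varphi = \text{tr}_\omega\omega_\varphi - n > -n$; and the $(n-1,n-1)$-form $T = \sum_{k=0}^{n-1}\omega_\varphi^k\wedge\omega^{n-1-k}$ is closed, nonnegative, and bounded below by $T \geqslant \omega^{n-1}$.

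First I would control $\sup_\mathcal{X}\varphi$ with the Green's function $G_x$ constructed above, taken in its average-zero normalization $\int_\mathcal{X} G_x\,\omega^n = 0$ and bounded below uniformly, $G_x \geqslant -K$ (a standard property of the Laplace Green's function on a compact orbifold). Since $\bar{\varphi} = 0$, its defining identity reads $\varphi(x) = -\int_\mathcal{X} G_x\,\Delta\varphi\,\omega^n = -\int_\mathcal{X} G_x(\Delta\varphi + n)\,\omega^n$. As $-G_x \leqslant K$, as $\Delta\varphi + n > 0$, and as $\int_\mathcal{X}(\Delta\varphi+n)\,\omega^n = n\int_\mathcal{X}\omega^n$ by Stokes' theorem, this gives $\varphi(x) \leqslant Kn\int_\mathcal{X}\omega^n$, so $\sup_\mathcal{X}\varphi \leqslant C_1$. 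Because $\int_\mathcal{X}\varphi\,\omega^n = 0$ forces the positive and negative parts of $\varphi$ to have equal integral, the pointwise upper bound then yields $\norm{\varphi}_1 \leqslant C_2$.

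Next I would pass to an $L^2$ bound and then iterate. The engine is the identity, valid for $p \geqslant 1$ and obtained from $\omega_\varphi^n - \omega^n = \sqrt{-1}\ddb\varphi\wedge T$ together with a polarized version of the integration by parts lemma,
\[
\int_\mathcal{X}(e^{tF}-1)\,|\varphi|^{p-1}\varphi\,\omega^n = -p\int_\mathcal{X} |\varphi|^{p-1}\,\sqrt{-1}\partial\varphi\wedge\bar{\partial}\varphi\wedge T.
\]
Using $T \geqslant \omega^{n-1}$, the identity $n\,\sqrt{-1}\partial u\wedge\bar{\partial} u\wedge\omega^{n-1} = |\partial u|_g^2\,\omega^n$, and the substitution $\beta = \varphi|\varphi|^{(p-1)/2}$ (for which $|\partial\beta|_g^2 = \tfrac{(p+1)^2}{4}|\varphi|^{p-1}|\partial\varphi|_g^2$ and $|\beta|^2 = |\varphi|^{p+1}$), this produces a bound $\norm{\partial\beta}_2^2 \leqslant c\,(p+1)^2 p^{-1}\norm{\varphi}_p^p$ with $c$ governed by $\sup_\mathcal{X}|1 - e^{tF}|$, which is uniformly bounded for $t\in[0,1]$ because $F$ is bounded. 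Taking $p=1$ and combining with the Poincar\'e inequality converts the $L^1$ bound into $\norm{\varphi}_2 \leqslant C_3$. For general $p$, feeding $\norm{\partial\beta}_2^2$ into the Sobolev inequality applied to $\beta$ yields, after absorbing the mismatch between the exponents $p$ and $p+1$ by H\"older's inequality, a recursion of the form $\norm{\varphi}_{\chi q} \leqslant (C q)^{C/q}\max(\norm{\varphi}_q, 1)$ with $\chi = n/(n-1) > 1$. Iterating over $q = 2\chi^j$ and multiplying the constants---whose logarithms sum to a convergent series since $\chi > 1$---bounds $\norm{\varphi}_\infty$ by a constant times $\max(\norm{\varphi}_2,1)$, controlling both $\sup\varphi$ and $-\inf\varphi$.

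The bookkeeping of the $p$-dependent constants through the iteration, so that the resulting infinite product converges, is the one genuinely delicate point, though it is entirely standard; the only orbifold-specific input beyond the theorems already proved is the uniform lower bound $G_x \geqslant -K$, which requires the off-diagonal regularity and singularity structure of the orbifold Green's function (slightly more than the $L_2^2$ membership recorded above) and follows from the construction in \cite{chiang90}. Uniformity of all constants in the parameter $t$ is automatic, since they enter only through $\sup_\mathcal{X}|1-e^{tF}| \leqslant \sup_{s\in[0,1]}\sup_\mathcal{X}|1-e^{sF}| < \infty$.
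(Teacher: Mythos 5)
Your proposal is correct and follows essentially the same route as the paper: the Green's function identity for the initial $L^1$/$L^2$ control, the integration-by-parts identity against the closed positive form $T$, the Poincar\'e and Sobolev inequalities, and Moser iteration, exactly as in the B\l{}ocki--Sz\'ekelyhidi argument the paper adapts. The only cosmetic difference is that the paper flips the sign convention and normalizes $\inf\varphi = 1$ so as to work with a positive $\varphi$ and powers $\varphi^{p/2}$ (applying the Green's function at the minimum point to get the $L^1$ bound), whereas you keep the zero-average normalization and use $\beta = \varphi|\varphi|^{(p-1)/2}$, reconciling the exponent mismatch by H\"older; both hinge on the same uniform lower bound for the Green's function that you correctly flag as the one orbifold-specific input beyond the stated theorems.
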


\begin{proof}
Without loss of generality we may assume that $\omega$ is rescaled so that $\mathcal{X}$ has volume $1$. A uniform bound on the $C^0$-norm of $\varphi$ is the same as a uniform bound on $-\varphi$. Thus, to eliminate some minus signs in the calculations which follow, it is sufficient to assume that $\omega - \sqrt{-1} \ddb \varphi$ is a K\"ahler form and that $\varphi$ satisfies 
\[
(\omega - \sqrt{-1} \ddb \varphi)^n = e^{tF} \omega^n.
\]
For such $\varphi$ with average value zero, it suffices to give a bound 
\[
\sup_\mathcal{X} \varphi - \inf_{\mathcal{X}}\varphi \leqslant C.
\] 
Thus, shifting $\varphi$ by a constant, to prove the claim, it suffices to show that for solutions with $\inf \varphi = 1$, we have a bound 
\[
\sup_{\mathcal{X}} \varphi \leqslant C,
\]
which is what we will show. 

We first show that we have a uniform bound $\norm{\varphi}_1 \leqslant C$ on the $L^1$-norm of solutions $\varphi$. The form $\omega - \sqrt{-1}\ddb\varphi$ is positive so that after taking the trace with respect to $\omega$ we have 
\[
n - \Delta \varphi > 0.
\]
where $\Delta$ is the Laplacian with respect to $\omega$. Let $x$ be a point where $\varphi$ achieves its minimum, and let $G_x$ be a Green's function with respect to the Laplacian $\Delta$. We may assume that $G_x$ is integrable and, shifting by a constant, that $G_x$ is nonnegative. Then
\begin{align*}
\varphi(x) = \int_\mathcal{X} \varphi \cdot \omega^n - \int_\mathcal{X} G_x \Delta \varphi \cdot \omega^n  &\geqslant \int_\mathcal{X} \varphi \cdot \omega^n -n \int_{\mathcal{X}} G_x \cdot \omega^n \\
&\geqslant \int_\mathcal{X} \varphi \cdot \omega^n - C. 
\end{align*}
It follows that we have a uniform estimate $\norm{\varphi}_1 \leqslant C$ as desired. 

We next show that we have a uniform estimate $\norm{\varphi}_2 \leqslant C$. If we write $\omega_{\varphi} = \omega - \sqrt{-1} \ddb \varphi$, then we compute that 
\begin{align*}
\int_{\mathcal{X}} \varphi(\omega_\varphi^n - \omega^n) = \int_{\mathcal{X}} \sqrt{-1} \partial \varphi \wedge \bar{\partial} \varphi \wedge T
\end{align*}
where $T$ is the positive $(n-1,n-1)$ form 
\[
T = \sum_{k=0}^{n-1} \omega^k \wedge \omega_\varphi^{n-1-k}.
\]
It follows that 
\[
\int_{\mathcal{X}} \varphi(\omega_\varphi^n - \omega^n) \geqslant \int_{\mathcal{X}} \sqrt{-1}\partial \varphi \wedge \bar{\partial}\varphi \wedge \omega^{n-1} 
\]
From the observation that $\omega_\varphi^n - \omega^n = (e^{tF}-1)\omega^n$ together with the estimate of the previous paragraph, we find that 
\[
\norm{\partial \varphi}_2^2 \leqslant C.
\]
The Poincar\'e inequality (Theorem \ref{thm:poincare}) then implies that 
\[
\int_{\mathcal{X}} (\varphi - \norm{\varphi}_1)^2 \omega^n \leqslant C \norm{\partial \varphi}_2^2 \leqslant C.
\]
Hence our bound from the previous paragraph implies a bound $\norm{\varphi}_2 \leqslant C$. 

Finally it is routine to use a technique called Moser iteration to establish a uniform bound $\sup_\mathcal{X} \varphi \leqslant C \norm{\varphi}_2$, which will complete the proof. For $p \geqslant 2$, we have 
\[
\int_{\mathcal{X}} \varphi^{p-1}(\omega_\varphi^n - \omega^n) = \frac{4(p-1)}{p^2} \int_{\mathcal{X}} \sqrt{-1} \partial \varphi^{p/2} \wedge \bar{\partial} \varphi^{p/2} \wedge T,
\] 
which implies 
\[
\int_{\mathcal{X}} \varphi^{p-1}(\omega_\varphi^n - \omega^n) \geqslant \frac{4(p-1)}{p^2} \int_\mathcal{X} \sqrt{-1} \partial \varphi^{p/2} \wedge \bar{\partial} \varphi^{p/2} \wedge \omega^{n-1}.
\]
We deduce that 
\[
\norm{\partial \varphi^{p/2}}_2^2 \leqslant Cp \norm{\varphi}_{p-1}^{p-1}
\]
for some constant $C$ independent of $p$.  The Sobolev inequality (Theorem \ref{thm:sobolev}) applied to $\varphi^{p/2}$ together with this estimate gives that 
\begin{align*}
\norm{\varphi}_{\frac{np}{n-1}}^p = \norm{\varphi^{p/2}}^2_{\frac{2n}{n-1}} &\leqslant C\left(\norm{\varphi^{p/2}}_2^2 + \norm{\partial \varphi^{p/2}}_2^2\right) \\
&\leqslant C\left(\norm{\varphi}_p^p + Cp \norm{\varphi}_{p-1}^{p-1}\right) \\
&\leqslant Cp\norm{\varphi}_p^p.
\end{align*}
If we write $p_k = (n/(n-1))^kp$, then we find 
\begin{align*}
\norm{\varphi}_{p_k} &\leqslant (Cp_{k-1})^{1/(p_k-1)}\norm{\varphi}_{p_k-1} \\
&\leqslant \norm{\varphi}_p \prod_{i=0}^{k-1} (Cp_i)^{1/p_i} \\
&\leqslant \norm{\varphi}_p \prod_{i=0}^{\infty}(Cp_i)^{1/p_i}.
\end{align*}
If we set $p = 2$ and let $k \to \infty$, then we find that 
\[
\sup_\mathcal{X} \varphi \leqslant C \norm{\varphi}_2,
\]
and the estimate from the previous paragraph on $\norm{\varphi}_2$ gives the desired bound. 
\end{proof}

The following lemma can be proved by a local calculation, which uses the Cauchy-Schwarz inequality twice and which can be found, for example, in \cite[Lemma 3.7]{szekelyhidi14}. 

\begin{lemma}
There is a constant $C$ depending on $\mathcal{X}$ and $\omega$ such that if $\varphi$ is a solution to \eqref{eqn:*_t} with average value zero, then
\[
\hat{\Delta} \log \textnormal{tr}_\omega \omega_\varphi \geqslant - C \textnormal{tr}_{\omega_\varphi}\omega - \frac{g^{j\bar{k}} \hat{R}_{j\bar{k}}}{\textnormal{tr}_\omega \omega_\varphi}
\]
where $\hat{\Delta}$ is the Laplacian with respect to $\omega_\varphi$ and $\hat{R}_{j\bar{k}}$ is the Ricci curvature of $\omega_\varphi$. 
\end{lemma}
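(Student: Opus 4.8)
The plan is to work entirely in a single orbifold chart, where the statement reduces to the classical Aubin–Yau second-order estimate for the complex Monge–Ampère equation, and to check that nothing in that local computation is disturbed by the orbifold structure. Since the inequality is pointwise and the quantities involved ($\operatorname{tr}_\omega\omega_\varphi$, the Laplacians $\hat\Delta$, the Ricci curvature $\hat R_{j\bar k}$) are all defined chart-by-chart from genuine tensors that are $G_\alpha$-invariant and compatible with the embeddings, it suffices to establish the inequality on the lift $\widetilde U_\alpha$, where $\omega_\alpha$ and $(\omega_\varphi)_\alpha$ are honest Kähler forms on an open subset of $\mathbb{C}^n$. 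The orbifold character of the statement is therefore purely notational: the local inequality descends to $\mathcal{X}$ because both sides transform as functions under the transition maps.

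First I would fix a point $p$ in the chart and choose normal coordinates for $\omega$ at $p$ that simultaneously diagonalize $(g_\varphi)_{j\bar k}$, so that $g_{j\bar k}=\delta_{jk}$ and $(g_\varphi)_{j\bar k}=(1+\varphi_{j\bar j})\delta_{jk}$ at $p$; this is the standard normalization that makes the curvature bookkeeping manageable. Next I would compute $\hat\Delta\log\operatorname{tr}_\omega\omega_\varphi$ directly. Writing $u=\operatorname{tr}_\omega\omega_\varphi=g^{j\bar k}(g_\varphi)_{j\bar k}$, one has
\[
\hat\Delta\log u=\frac{\hat\Delta u}{u}-\frac{|\hat\partial u|^2_{\omega_\varphi}}{u^2}.
\]
Differentiating the equation $\log\det(g_\varphi)-\log\det g = tF$ twice and commuting covariant derivatives produces, after contracting, the Ricci curvature of $\omega$ (absorbed into the constant $C$ via the compactness of $\mathcal{X}$) together with the bisectional curvature terms and a collection of third-derivative terms. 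The term $-g^{j\bar k}\hat R_{j\bar k}/\operatorname{tr}_\omega\omega_\varphi$ appears precisely from differentiating the Monge–Ampère equation, and the $-C\operatorname{tr}_{\omega_\varphi}\omega$ term collects the bisectional curvature of the background metric, which is bounded on the compact orbifold.

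The main obstacle is the handling of the third-order derivative terms: the numerator $\hat\Delta u$ contains a sum of the form $g^{j\bar k}(g_\varphi)^{p\bar q}\varphi_{j\bar q}{}_{,\ell}\,\overline{\varphi_{k\bar p}{}_{,\ell}}$ (schematically), and these must be shown to dominate the gradient correction $|\hat\partial u|^2_{\omega_\varphi}/u^2$ so that they can be discarded, leaving an inequality rather than an equality. This is exactly where the Cauchy–Schwarz inequality is applied twice, as the statement advertises: once to bound $|\hat\partial u|^2$ by the full third-order sum, and once to control a cross term. I would carry out this comparison at the chosen point $p$ in the diagonalizing frame, where the positivity of $(g_\varphi)^{j\bar k}$ makes the Cauchy–Schwarz estimates transparent, and conclude that the bad third-order terms have a favorable sign and may be dropped. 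Since every step is a pointwise identity or inequality in the local chart and all constants arise from the bounded geometry of the fixed background $(\mathcal{X},\omega)$, the resulting inequality holds on all of $\mathcal{X}$ with a constant $C$ depending only on $\mathcal{X}$ and $\omega$, as claimed. I refer to \cite[Lemma 3.7]{szekelyhidi14} for the detailed local computation, which transfers verbatim to the chart $\widetilde U_\alpha$.
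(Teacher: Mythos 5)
Your proposal is correct and takes essentially the same route as the paper, which likewise reduces the lemma to the pointwise local computation of \cite[Lemma 3.7]{szekelyhidi14} (normal coordinates diagonalizing $\omega_\varphi$, two applications of Cauchy--Schwarz to absorb the third-order terms) and observes that, since both sides are globally defined invariant functions and the bisectional curvature of the background metric is bounded by compactness, the chartwise inequality descends to $\mathcal{X}$. The only cosmetic difference is that the term $g^{j\bar k}\hat R_{j\bar k}$ arises from the identity $\hat R_{j\bar k}=-\partial_j\partial_{\bar k}\log\det(g_\varphi)$ in the commutation of derivatives rather than from the Monge--Amp\`ere equation itself (the lemma holds for any K\"ahler $\omega_\varphi$ in the class; the equation is only used afterward to control $\hat R_{j\bar k}$), but this does not affect the argument.
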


The $C^2$-estimate then follows directly from this lemma together with the $C^0$-estimate, again by a local computation which uses only rudimentary tools such as the Cauchy-Schwarz inequality and which can be found again in \cite[Lemma 3.8]{szekelyhidi14}.

\begin{lemma}[$C^2$-estimate]\label{lem:uniformequiv}
There is a constant $C$ depending on $\mathcal{X}, \omega, F$ such that a solution $\varphi$ of \eqref{eqn:*_t} with average value zero satisfies 
\[
C^{-1}(g_{j\bar{k}}) < (g_{j\bar{k}} + \partial_j \partial_{\bar{k}}\varphi) < C (g_{j\bar{k}}). 
\]
\end{lemma}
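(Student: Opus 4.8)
The plan is to carry out the maximum-principle argument of Yau in the form presented in \cite{szekelyhidi14}. The first observation is that the two-sided matrix bound is equivalent to a single upper bound on $\text{tr}_\omega \omega_\varphi = g^{j\bar k}(g_\varphi)_{j\bar k}$. Indeed, let $\lambda_1, \dots, \lambda_n > 0$ be the eigenvalues of $(g_\varphi)_{j\bar k}$ relative to $(g_{j\bar k})$; then \eqref{eqn:*_t} reads $\prod_i \lambda_i = e^{tF}$, so the $C^0$-bound on $F$ bounds $\prod_i \lambda_i$ from above and below uniformly. A bound $\sum_i \lambda_i \leqslant C$ immediately gives $\lambda_j \leqslant C$ for every $j$ (the right-hand matrix inequality), and then $\lambda_j = e^{tF}\big(\prod_{i \ne j}\lambda_i\big)^{-1} \geqslant e^{tF} C^{-(n-1)} \geqslant C^{-1}$ after enlarging $C$ (the left-hand matrix inequality). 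So it suffices to bound $\sup_\mathcal{X} \text{tr}_\omega \omega_\varphi$.

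To obtain that bound I would apply the maximum principle to $Q = \log \text{tr}_\omega \omega_\varphi - A\varphi$, where $A$ is a constant fixed below. At a point $x_0$ where $Q$ attains its maximum on the compact orbifold $\mathcal{X}$ we have $\hat\Delta Q \leqslant 0$. Combining the preceding differential inequality with the identity $\hat\Delta \varphi = \text{tr}_{\omega_\varphi}(\omega_\varphi - \omega) = n - \text{tr}_{\omega_\varphi}\omega$ gives, at $x_0$,
\[
0 \geqslant \hat\Delta Q \geqslant (A - C)\,\text{tr}_{\omega_\varphi}\omega - \frac{g^{j\bar k}\hat R_{j\bar k}}{\text{tr}_\omega \omega_\varphi} - An.
\]
The Monge-Amp\`ere equation controls the remaining Ricci term: differentiating $\log\det(g_\varphi) = tF + \log\det g$ twice yields $\hat R_{j\bar k} = R_{j\bar k} - t\,\partial_j\partial_{\bar k}F$, where $R_{j\bar k}$ is the Ricci curvature of $\omega$, so $g^{j\bar k}\hat R_{j\bar k} = \text{tr}_\omega \text{Ric}(\omega) - t\,\Delta F$ is a smooth function bounded in terms of $\mathcal{X}, \omega, F$ alone. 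Since the arithmetic-geometric mean inequality and the bound on $F$ give $\text{tr}_\omega \omega_\varphi \geqslant n\,e^{tF/n} \geqslant c_0 > 0$, the Ricci term is bounded above uniformly; choosing $A = C + 1$ then produces a bound $\text{tr}_{\omega_\varphi}\omega(x_0) \leqslant C'$.

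It remains to convert this into the desired estimate. At $x_0$ we have $\sum_i \lambda_i^{-1} = \text{tr}_{\omega_\varphi}\omega \leqslant C'$, hence each $\lambda_i \geqslant (C')^{-1}$ and $\lambda_j = e^{tF}\big(\prod_{i\ne j}\lambda_i\big)^{-1} \leqslant e^{tF}(C')^{n-1}$, so $\text{tr}_\omega \omega_\varphi(x_0) \leqslant C''$. Therefore $Q(x_0) \leqslant \log C'' + A\sup_\mathcal{X}|\varphi|$, and since $Q \leqslant Q(x_0)$ everywhere, we obtain $\log \text{tr}_\omega \omega_\varphi \leqslant \log C'' + 2A\sup_\mathcal{X}|\varphi|$ on all of $\mathcal{X}$. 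Invoking the $C^0$-estimate (Lemma \ref{lem:C^0}) to bound $\sup_\mathcal{X}|\varphi|$ yields the uniform upper bound on $\text{tr}_\omega \omega_\varphi$, which completes the reduction of the first paragraph.

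The main obstacle is really the second-order differential inequality recorded in the preceding lemma; its proof in the smooth case is a delicate local computation in which the constant $C$---and its dependence only on $\mathcal{X}$ and $\omega$ through the bisectional curvature of $\omega$---arises. Granting that inequality, everything above is either the maximum-principle step, the elementary eigenvalue bookkeeping, or an appeal to Lemma \ref{lem:C^0}. In the orbifold setting each of these manipulations is local and $G_\alpha$-invariant in charts, so it transfers verbatim; the only points requiring care are that the maximum of $Q$ is attained (ensured by compactness) and that every constant is genuinely uniform in $t \in [0,1]$, which holds because $t$ and $F$ enter only through the uniformly bounded quantities $e^{tF}$ and $t\,\partial_j\partial_{\bar k}F$.
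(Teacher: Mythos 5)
Your proof is correct and follows exactly the route the paper indicates: it deduces the estimate from the preceding differential inequality together with the $C^0$-estimate via the maximum principle applied to $\log \text{tr}_\omega \omega_\varphi - A\varphi$, which is the argument of \cite[Lemma 3.8]{szekelyhidi14} that the paper cites without writing out. You have simply supplied the details (eigenvalue bookkeeping, the computation of $\hat R_{j\bar k}$ from the Monge--Amp\`ere equation, and the remark that the maximum principle applies in a uniformizing chart) that the paper leaves to the reference.
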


Let $S$ denote the tensor given by the difference of Levi-Civita connections $S = \hat{\nabla} - \nabla$. Note that $S$ depends on the third derivatives of $\varphi$. So if $|S|$ denotes the norm of $S$ with respect to the metric $\omega_\varphi$, the fact that the metric $g_{j\bar{k}}$ is uniformly equivalent to the metric $g_{j\bar{k}} + \partial_j \partial_{\bar{k}}\varphi$ implies that a bound on $|S|$ gives a $C^3$-bound on $\varphi$.

\begin{lemma}[$C^3$-estimate]\label{lem:C^3}
There is a constant $C$ depending on $\mathcal{X}, \omega, F$ such that if $\varphi$ is a solution to \eqref{eqn:*_t} with average value zero, then $|S| \leqslant C$, where $|S|$ is the norm of $S$ computed with respect to the metric $\omega_\varphi$. 
\end{lemma}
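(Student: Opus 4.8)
The plan is to derive a differential inequality for $|S|^2$ and then apply the maximum principle on the compact orbifold $\mathcal{X}$, feeding in a good positive term that is produced as a byproduct of the second-order estimate. Every computation below is carried out in a fixed orbifold chart $\widetilde{U}_\alpha$, on which $\varphi$ lifts to a $G_\alpha$-invariant function, so that the local work is identical to the ordinary computation on a domain in $\mathbb{C}^n$; the only genuinely orbifold-theoretic ingredient is the maximum principle, which continues to hold because a smooth function on the compact underlying space $X$ attains its maximum at some point, and this point lifts to an honest interior maximum of a $G_\alpha$-invariant function in a chart. First I would record that $S = \hat{\nabla} - \nabla$ is a genuine tensor, being a difference of connections, whose components in local coordinates are the differences $\hat{\Gamma}^k_{ij} - \Gamma^k_{ij}$ of Christoffel symbols and hence are built from the third derivatives of $\varphi$ contracted against the inverse metrics. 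By the $C^2$-estimate (Lemma \ref{lem:uniformequiv}) the metrics $\omega$ and $\omega_\varphi$ are uniformly equivalent, so $|S|$ measured with respect to $\omega_\varphi$ is comparable to the third-derivative tensor of $\varphi$ measured in the fixed background metric; a bound on $|S|$ is therefore precisely a $C^3$-bound on $\varphi$.

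The heart of the argument is Calabi's Bochner-type computation of $\hat{\Delta}|S|^2$, where $\hat{\Delta}$ is the Laplacian of $\omega_\varphi$. Carrying this out on a chart produces a good nonnegative gradient term $|\hat{\nabla}S|^2 + |\overline{\hat{\nabla}}S|^2$ together with curvature terms. The a priori dangerous terms are those involving the full curvature tensor of $\omega_\varphi$, which we have not controlled; the crucial point, due to Calabi, is an algebraic cancellation among these terms which reduces them to the curvature of the fixed background metric $\omega$ (bounded on the compact $\mathcal{X}$) plus terms controlled by $|S|^2$. The surviving Ricci curvature of $\omega_\varphi$ is handled by differentiating the Monge-Amp\`ere equation $\log\det(g_{j\bar{k}} + \partial_j\partial_{\bar{k}}\varphi) = \log\det(g_{j\bar{k}}) + tF$, which expresses $\hat{R}_{j\bar{k}}$ in terms of $R_{j\bar{k}}$ and $\partial_j\partial_{\bar{k}}F$ and so introduces derivatives of $F$ up to third order. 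Using the uniform equivalence of metrics to bound all contractions, the outcome is an inequality of the shape
\[
\hat{\Delta}|S|^2 \geqslant -C_1\,|S|^2 - C_2,
\]
with $C_1, C_2$ depending only on $\mathcal{X}$, $\omega$, and the third derivatives of $F$.

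This inequality alone is insufficient, since the sign of $-C_1|S|^2$ is wrong for the maximum principle. The remedy is to reexamine the computation underlying the $C^2$-estimate, which yields, as a byproduct, a positive third-order term: one obtains
\[
\hat{\Delta}\,\textnormal{tr}_\omega \omega_\varphi \geqslant \epsilon\,|S|^2 - C_3
\]
for some $\epsilon > 0$, the bad terms of the form $-C(\textnormal{tr}_\omega\omega_\varphi)(\textnormal{tr}_{\omega_\varphi}\omega)$ being bounded by Lemma \ref{lem:uniformequiv}. I would then apply the maximum principle to the auxiliary function $Q = |S|^2 + A\,\textnormal{tr}_\omega\omega_\varphi$. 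At a point of $X$ where $Q$ attains its maximum, lifting to a chart and using $\hat{\Delta}Q \leqslant 0$ gives
\[
0 \geqslant \hat{\Delta}Q \geqslant (A\epsilon - C_1)\,|S|^2 - (C_2 + AC_3),
\]
so that choosing $A > C_1/\epsilon$ bounds $|S|^2$ at the maximum of $Q$; since $\textnormal{tr}_\omega\omega_\varphi$ is itself bounded by Lemma \ref{lem:uniformequiv}, it follows that $Q$, and hence $|S|^2$, is bounded everywhere on $\mathcal{X}$.

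The main obstacle is the Calabi computation of $\hat{\Delta}|S|^2$, specifically isolating the cancellation that tames the uncontrolled third-order curvature terms of $\omega_\varphi$; this is a delicate but purely local and purely algebraic manipulation, identical to the manifold case, so that no new orbifold input is required beyond the validity of the maximum principle on compact orbifolds noted above. A secondary point demanding care is confirming the precise sign and coefficient of the good $|S|^2$ term in the second-order inequality, on which the choice of the constant $A$ depends.
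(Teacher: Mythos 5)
Your proposal follows essentially the same route as the paper: the two differential inequalities $\hat{\Delta}|S|^2 \geqslant -C|S|^2 - C$ and $\hat{\Delta}\,\textnormal{tr}_\omega\omega_\varphi \geqslant \epsilon|S|^2 - C$ (quoted from Calabi-type local computations), combined via the auxiliary function $|S|^2 + A\,\textnormal{tr}_\omega\omega_\varphi$ and the maximum principle on the compact orbifold. The argument is correct, and your explicit remark that the maximum point lifts to an interior maximum of a $G_\alpha$-invariant function in a chart makes precise the one orbifold-specific step the paper leaves implicit.
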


\begin{proof}
Again local computations and rudimental local identities from complex geometry can be used first to obtain estimates of the form 
\[
\hat{\Delta}|S|^2 \geqslant - C|S|^2 - C
\]
and  
\[
\hat{\Delta} \text{tr}_\omega \omega_\varphi \geqslant -C + \epsilon |S|^2
\]
where $C$ denotes a large constant and $\epsilon$ a small one. We direct the reader again to \cite[Lemma 3.9 and Lemma 3.10]{szekelyhidi14} for local proofs of these estimates. 

It follows that we may choose a large constant $A$ such that 
\[
\hat{\Delta}(|S|^2 + A \text{tr}_\omega \omega_\varphi) \geqslant |S|^2 - C
\]
Suppose that $|S|^2 + A \text{tr}_\omega \omega_\varphi$ achieves its maximum at $x \in X$. Then in a local chart around $x$ we have a local inequality of the form 
\[
0 \geqslant |S|^2(x) - C
\]
so that $|S|^2(x) \leqslant C$. At any other point $y \in X$, this bound together with the $C^2$-estimate imply any estimate of the form 
\[
|S|^2(y) \leqslant (|S|^2 + A \text{tr}_\omega \omega_\varphi)(y) \leqslant (|S|^2 + A \text{tr}_\omega \omega_\varphi)(x) \leqslant C.
\]
This is what we wanted. 
\end{proof}

We are now able to complete a proof of Proposition \ref{prop:suff}. 
\medskip

\noindent \emph{Proof of Proposition \ref{prop:suff} (ii)}. Lemma \ref{lem:uniformequiv} shows that the metric $\omega_\varphi$ is uniformly equivalent to the metric $\omega$. Lemma \ref{lem:C^3} implies that we have a uniform bound of the form $\norm{\varphi}_{C^3(\mathcal{X})} \leqslant C$, from which it follows that we have a uniform bound of the form $\norm{\varphi}_{C^{2,\alpha}(\mathcal{X})} \leqslant C$.  Bootstrapping arguments together with Theorem \ref{thm:ellipticest} and Lemma \ref{lem:C^0} imply that we actually have a uniform bound $\norm{\varphi}_{C^{3,\alpha}(\mathcal{X})} \leqslant C$, as desired. \hfill $\Box$

\section{K\"ahler-Einstein metrics for orbifolds with $c_1(\mathcal{X}) < 0$}\label{sec:KE}

The goal of this section is to outline briefly a proof of Theorem \ref{thm:KE}.  One way to proceed is by showing that the K\"ahler-Einstein condition is equivalent to a Monge-Amp\`ere equation that is only slightly different than \eqref{eqn:MA}, and then use the same technique of the continuity method to solve this slightly modified equation. 

Indeed, let $\omega$ be any K\"ahler metric in $-2\pi c_1(\mathcal{X})$. The Ricci form $\text{Ric}(\omega)$ belongs to the class $2\pi c_1(\mathcal{X})$, so by the $\ddb$-lemma there is a smooth function $F$ such that 
\[
\text{Ric}(\omega) = -\omega + \sqrt{-1}\ddb F.
\] 
Any other K\"ahler metric can be written as $\omega_\varphi = \omega + \sqrt{-1}\ddb\varphi$ for a smooth function $\varphi$ on $\mathcal{X}$. The Ricci form of $\omega_\varphi$ satisfies 
\[
\text{Ric}(\omega_\varphi) = \text{Ric}(\omega) - \sqrt{-1} \ddb \log \frac{\omega_\varphi^n}{\omega^n}
\]
and so the K\"ahler-Einstein condition $\text{Ric}(\omega_\varphi) = - \omega_\varphi$ reduces to 
\[
-\sqrt{-1} \ddb \varphi = \sqrt{-1} \ddb F - \sqrt{-1} \ddb \log \frac{\omega_\varphi^n}{\omega^n}. 
\]
For this equation to be true, it suffices to solve the Monge-Amp\`ere equation \eqref{eqn:MA2}.

To solve equation \eqref{eqn:MA2}, one can use a continuity method as in the case of \eqref{eqn:MA}. Indeed, one can introduce the family of equations 
\begin{align}\label{eqn:*_s}
\begin{cases}
(\omega + \sqrt{-1}\ddb \varphi)^n = e^{sF + \varphi} \omega^n \\
\omega + \sqrt{-1} \ddb \varphi \; \text{is a K\"ahler form}
\end{cases} \tag{$*_s$}
\end{align}
indexed by a parameter $s \in [0,1]$ and show that the set of such $s$ for which \eqref{eqn:*_s} admits a smooth solution is both open and closed. The openness follows from the implicit function theorem as above, with the only modification being that the linearized operator at $s$ is given by 
\[
\psi \mapsto \Delta_s \psi - \psi.
\]
The closedness follows from appropriate $C^0$-, $C^2$-, and $C^3$-estimates for solutions of \eqref{eqn:*_s}, which can be obtained from only very slight modifications of the arguments for the corresponding estimates for solutions of \eqref{eqn:*_t}. Moreover, the case of the $C^0$-estimate is even easier for solutions of \eqref{eqn:*_s}, as one can argue using the maximum principle (see \cite[Lemma 3.6]{szekelyhidi14} for a proof in the nonsingular case).

\section{Examples of Calabi-Yau orbifolds}\label{sec:rf}

Theorem \ref{thm:calabi} produces Ricci-flat K\"ahler metrics on orbifolds with $c_1(\mathcal{X}) = 0$ as a real cohomology class in $H^2(\mathcal{X}, \mathbb{R})$. In this section, we give examples of such orbifolds, which we call Calabi-Yau orbifolds. 

Previously we had defined the first Chern class of a K\"ahler orbifold $(\mathcal{X}, \omega)$ using the Ricci form $\text{Ric}(\omega)$. Alternatively, the first Chern class can be defined as a real cohomology class using connections and Chern-Weil theory as usual. In particular, the square of a unitary connection $\nabla$ on $\mathcal{X}$ corresponds to a mapping $F_\nabla$ taking $(1,0)$-tensors to $(1,2)$-tensors which is linear over the ring of smooth complex-valued functions on $\mathcal{X}$. The curvature $F_\nabla$ determines a characteristic polynomial in $t$
\[
\det\left(\text{id} - \frac{1}{2\pi \sqrt{-1}} F_\nabla t\right) = \sum_{k=0}^{n} f_k(\mathcal{X}) t^k
\] 
where $f_k(\mathcal{X})$ corresponds to a \emph{real} $2k$-form on $\mathcal{X}$ whose complex dimension we have denoted by $n$. The cohomology class determined by $f_k(\mathcal{X})$ independent of the choice of unitary connection and defines a real cohomology class $c_k(\mathcal{X}) \in H^{2k}(\mathcal{X}, \mathbb{R})$ called the $k$th Chern class.  

There is also a way to define the Chern classes as integral cohomology classes $c_k(\mathcal{X}) \in H^{2k}(\mathcal{X}, \mathbb{Z})$ (see \cite{alr07}). For our purposes, it is enough to know that the integral first Chern class $c_1(\mathcal{X})$ vanishes in $H^2(\mathcal{X}, \mathbb{Z})$ if and only if the sheaf $\omega_{\mathcal{X}}$ of germs of $n$-forms is isomorphic to the trivial invertible sheaf $\mathcal{O}_{\mathcal{X}}$ of germs of holomorphic functions. If $c_1(\mathcal{X})$ vanishes as an integral cohomology class, then it must also vanish as a real cohomology class. However, the converse is not true in general, as we will see in Example \ref{ex:eo} below. 

In the special case that $\dim_{\mathbb{C}}\mathcal{X} = 1$, the condition that $c_1(\mathcal{X})$ vanish as a real cohomology class is equivalent to the condition that the degree 
\[
\deg(\omega_{\mathcal{X}}) = - \langle c_1(\mathcal{X}), [\mathcal{X}] \rangle = -\int_{\mathcal{X}} c_1(\mathcal{X})
\]
of the sheaf $\omega_{\mathcal{X}}$ vanishes, where $[\mathcal{X}] \in H_{2}(\mathcal{X}, \mathbb{R})$ denotes the fundamental class determined by a choice of orientation on $\mathcal{X}$.  Let us use this observation first to classify all Calabi-Yau orbifold Riemann surfaces, which we call elliptic orbifolds. 

\begin{example}\label{ex:eo}
Let $\mathcal{X}$ be a connected closed orbifold Riemann surface with finitely many stacky points $p_1, \ldots, p_n$ in the underlying space $X$ . (We assume that $\mathcal{X}$ contains at least one stacky point, or equivalently, $n > 0$.) For each $i$, let $m_i > 1$ denote the size of the stabilizer group of $p_i$.  We may assume that we have ordered the points so that $m_1 \geqslant \cdots \geqslant m_n$. The goal of this example is to show that the statement $c_1(\mathcal{X}) = 0$ as a real cohomology class can be stated in terms of the data $n, m_1, \ldots, m_n$, and will hence give a finite list of possibilities. 

Let $\mathcal{Y}$ denote the complex manifold whose complex structure is determined by the atlas represented by the open subsets $\widetilde{U} \subset \mathbb{C}$ from the charts of $\mathcal{X}$ together with the transition functions determined by the embeddings of these charts. Then $\mathcal{Y}$ is a connected closed smooth Riemann surface of genus $g$. In particular, $\mathcal{Y}$ is an effective orbifold in which every group in every orbifold chart is trivial. Let $\pi : \mathcal{X} \to \mathcal{Y}$ denote the corresponding canonical smooth map of effective orbifolds, which we study presently. 

Away from the stacky points, the map $\pi$ is an isomorphism of effective orbifolds. More precisely, denote by $q_i$ the point in $\mathcal{Y}$ corresponding to $p_i$ in $X$. Let $U$ denote the subset $U = X \setminus \{p_1, \ldots, p_n\}$ together with the orbifold structure determined by $\mathcal{X}$, and let $V$ denote the complex submanifold of $\mathcal{Y}$ given by $V = Y \setminus \{q_1, \ldots, q_n\}$ where $Y$ is the underlying space of $\mathcal{Y}$. Then the restriction 
\[
\pi|_{U} : U \to V
\]
is an isomorphism of effective orbifolds. 

Near the point $p_i$, however, the map $\pi$ can be described as follows. If $w$ is a local coordinate on $\mathcal{Y}$ near $q_i$ and if $z$ is a local coordinate on $\mathcal{X}$ near $p_i$, then $\pi^*w = z^{m_i}$.  It follows that if $\mathcal{O}_{\mathcal{Y}}(q_i)$ denotes the locally free sheaf corresponding to the divisor $q_i$ in $\mathcal{Y}$, then upon pulling back to $\mathcal{X}$ we obtain $\pi^*(\mathcal{O}_{\mathcal{Y}}(q_i)) = \mathcal{O}_{\mathcal{X}}(m_ip_i)$. Since the degree of the line bundle $\mathcal{O}_{\mathcal{Y}}(q_i)$ is $1$ and pulling back is compatible with taking degree, we find that 
\[
\deg (\mathcal{O}_{\mathcal{X}}(m_ip_i)) = \deg(\pi^*(\mathcal{O}_{\mathcal{Y}}(q_i))) = \deg(\mathcal{O}_{\mathcal{Y}}(q_i) ) = 1.
\]
We conclude that we must have 
\[
\deg(\mathcal{O}_{\mathcal{X}}(p_i)) = \frac{1}{m_i}. 
\]

If $\omega_{\mathcal{X}}$ denotes the coherent sheaf of germs of one-forms on $\mathcal{X}$, then we claim that 
\begin{align}\label{eqn:dualizing}
\pi^*\omega_{\mathcal{Y}} = \omega_{\mathcal{X}} \left(\sum_{i=1}^n(1-m_i)p_i\right).
\end{align}
Indeed, away from the stacky points, the map $\pi$ is an isomorphism, so we have 
\[
\left.\pi^*\omega_{\mathcal{Y}}\right|_{{U}} = \left.\omega_{\mathcal{X}}\right|_{U} = \left.\omega_{\mathcal{X}} \left(\sum_{i=1}^n(1-m_i)p_i\right)\right|_U
\]
where the last equality follows because $p_i \notin U$. 
On the other hand, near the point $p_i$, the projection map $\pi$ satisfies 
\[
\pi^*(dw) = d(z^{m_i}) = m_i z^{m_i-1}dz. 
\]
If $U_i$ is a neighborhood of $p_i$ satisfying $U_i \cap \{p_1, \ldots, p_n\} = p_i$, then the previous equality shows that 
\[
\left.\pi^*\omega_{\mathcal{Y}}\right|_{{U_i}}  = \left.\omega_{\mathcal{X}} \left(\sum_{i=1}^n(1-m_i)p_i\right)\right|_{U_i}.
\]
The claim now follows. 

Taking the degree of \eqref{eqn:dualizing}, we find that 
\[
2g -2 = \deg(\omega_{\mathcal{X}}) + \sum_{i=1}^n \frac{1-m_i}{m_i} = \deg(\omega_{\mathcal{X}}) -n + \sum_{i=1}^n \frac{1}{m_i}. 
\]
And hence the degree of the canonical sheaf $\omega_{\mathcal{X}}$ satisfies 
\[
\deg(\omega_{\mathcal{X}}) = 2g - 2 + n - \sum_{i=1}^n \frac{1}{m_i}. 
\]
Therefore, the condition $c_1(\mathcal{X}) = 0$ as a real cohomology class is equivalent to the equation
\begin{align}\label{eqn:cy1}
2 -2g = \sum_{i=1}^n \left(1 - \frac{1}{m_i}\right). 
\end{align}
However, this condition is not enough to ensure that the first Chern class vanishes as an \emph{integral} cohomology class, and in fact, even with this condition, the first Chern class is \emph{never} zero in $H^2(\mathcal{X}, \mathbb{Z})$ because the integral first Chern class restricts to a generator $c_1(\mathcal{X})|_{p_i} \in H^2(p_i, \mathbb{Z}) \simeq \mathbb{Z}/m_i \mathbb{Z}$ of the integral cohomology group of each stacky point.  Nevertheless, if $m$ is the least common multiple of $m_1, \ldots, m_n$, then $\omega_{\mathcal{X}}^{\otimes m}$ is trivial, so that the multiple $mc_1(\mathcal{X})$ \emph{is} zero as an integral cohomology class. 

Now let us determine the possibilities for $n, m_1, \ldots, m_n$. Each term on the right hand side of \eqref{eqn:cy1} is at least $1/2$ and less than $1$, so we find that 
\[
\frac{n}{2} \leqslant 2 - 2g < n.
\]
Since $n \geqslant 1$, we conclude that $g = 0$, and thus there are two cases for $n$: either $n = 3$ or $n = 4$. 
\begin{itemize}
\item Suppose that $n = 4$. Then equation \eqref{eqn:cy1} implies that we have 
\[
\frac{1}{m_1} + \frac{1}{m_2} + \frac{1}{m_3} + \frac{1}{m_4} = 2.
\]
There is only one possibility 
\begin{enumerate}
\item[(i)] $m_1 = m_2 = m_3 = m_4 = 2$.  
\end{enumerate}
\item Suppose that $n = 3$. Then equation \eqref{eqn:cy1} implies that we have 
\[
\frac{1}{m_1} + \frac{1}{m_2} + \frac{1}{m_3} = 1.
\]
We also have that $m_1 \geqslant m_2 \geqslant m_3 \geqslant 2$. There are three possibilities 
\begin{enumerate}
\item[(ii)] $m_1 = m_2 = 4, m_3 = 2$;
\item[(iii)] $m_1 = 6, m_2 = 3, m_3 = 2$;
\item[(iv)] $m_1 = m_2 = m_3 = 3$. 
\end{enumerate}

\end{itemize}

Each of these cases (i) through (iv) can be realized explicitly as a quotient of an elliptic curve. Indeed for a complex number $\tau$ in the upper half plane $\{z \in \mathbb{C} : \text{Im}(z) > 0\}$, let $E_\tau$ denote the smooth elliptic curve 
\[
E_\tau = \mathbb{C}/(\mathbb{Z} + \mathbb{Z}\tau).
\]
The flat K\"ahler metric on $\mathbb{C}$ descends to a flat K\"ahler metric on $E_\tau$. If $\Gamma$ is an finite group acting holomorphically and isometrically on $E_\tau$, then the flat K\"ahler metric on $E_\tau$ descends to a flat K\"ahler metric (hence Ricci flat) metric on the global quotient orbifold $[E_\tau/\Gamma]$. 

\begin{enumerate}
\item[(i)] If $\tau$ is any element of the upper half plane, then the group $\Gamma = \{\pm1\}$ acts on $\mathbb{C}$ in such a way that the action descends to the quotient $E_\tau$. If $[0]$ denotes the point in $E_\tau$ corresponding to $0 \in \mathbb{C}$, then $[0]$ is fixed by every element of $\Gamma$. The same is also true of the points $[1/2], [\tau/2]$, and $[(1 + \tau)/2]$. It follows that the orbifold $[E_\tau/\Gamma]$ is an elliptic orbifold. Such an orbifold is called a ``pillowcase'' in the literature. 
\item[(ii)] Suppose in particular that $\tau = \sqrt{-1}$. The group $\Gamma =  \{\pm1,\pm \tau\} \simeq \mathbb{Z}_4$ acts on $\mathbb{C}$ in such a way that the action descends to one on $E_\tau$. The points $[0]$ and $[(1 + \sqrt{-1})/2]$ are fixed by every element of $\Gamma$, and hence have a stabilizer group of order $4$.  The point $[1/2]$ is fixed by the subgroup of size two consisting of $\{1, -1\}$. It follows that the orbifold $[E_\tau/\Gamma]$ corresponds to the elliptic orbifold $\mathbb{P}^1_{4,4,2}$. 
\item[(iii)] Suppose that $\tau = e^{\pi \sqrt{-1}/3}$. The rotations generated by $-1$ and $e^{2\pi\sqrt{-1}/3}$ act in a well-defined way on $E_\tau$ so that the group $\Gamma = \mathbb{Z}_6 \simeq \mathbb{Z}_2 \times \mathbb{Z}_3$ acts on $E_\tau$. The points $[0], [(\tau + 1)/3]$, and $[(\tau + 1)/2]$ have stabilizers of orders $6, 3,$ and $2$ respectively. It follows that $[E_\tau/\Gamma]$ corresponds to the elliptic orbifold  $\mathbb{P}^1_{6,3,2}$. 
\item[(iv)] Suppose again that $\tau = e^{\pi \sqrt{-1}/3}$. The group $\mathbb{Z}_3$ acts on $\mathbb{C}$ by rotations generated by $\tau^2 = e^{2\pi \sqrt{-1}/3}$. This action descends to a $\mathbb{Z}_3$-action on $E_\tau$. The points $[0], [(1 + \tau)/3],$ and $[2(1 + \tau)/3]$ are distinct points in the quotient with stabilizers of order $3$. It follows that $[E_\tau/\Gamma']$ corresponds to the elliptic orbifold $\mathbb{P}^1_{3,3,3}$.   
\end{enumerate}
This completes our discussion of elliptic orbifolds.
\end{example}

For examples of higher dimensional Calabi-Yau orbifolds, one can consider complete intersections in weighted projective space. In this case, one can construct examples where the first Chern class vanishes as an \emph{integral} cohomology class. 

\begin{example}
For positive integers $q_0, \ldots, q_n$ satisfying $\text{gcd}(q_0, \ldots, q_n) = 1$, let $\mathbb{CP}[q_0, \ldots, q_n]$ denote $n$-dimensional weighted projective space. Recall that as a topological space
\[
\mathbb{CP}[q_0, \ldots, q_n] = (\mathbb{C}^{n+1} \setminus \{0\})/\sim
\]
where $\sim$ is the equivalence relation defined by $(z_0, \ldots, z_n) \sim (w_0, \ldots, w_n)$ if and only if there is a nonzero $\lambda \in \mathbb{C}^*$ such that $z_i = \lambda^{q_i}w_i$ for each $i$. 

It is useful to view $\mathbb{CP}[q_0, \ldots, q_n]$ as a toric variety.  Let $N$ be a lattice spanned by vectors $u_0, \ldots, u_n$ satisfying the relation $q_0u_0 + \cdots + q_nu_n = 0$, and let $\Sigma$ be the fan of all cones generated by proper subsets of $\{u_0, \ldots, u_n\}$. Then $\mathbb{CP}[q_0, \ldots, q_n]$ is the toric variety $X_\Sigma$ corresponding to the fan $\Sigma$. Let $D_i$ denote the torus-invariant divisor in $\mathbb{CP}[q_0, \ldots, q_n]$ corresponding to the one-dimensional cone spanned by $u_i$.  According to \cite[Exercise 4.1.5]{cls11} the class group $\text{Cl}(X_\Sigma)$ can be identified with $\mathbb{Z}$ in such a way that the divisor $\sum_i a_i D_i$ determines the element $\sum_{i}a_i q_i$ of $\mathbb{Z} \simeq \text{Cl}(X_\Sigma)$.  The divisor $-D_0 - \cdots - D_n$ corresponds to the dualizing sheaf of $X_\Sigma$ which in term corresponds to the element $-q_0 - \cdots - q_n \in \mathbb{Z} \simeq \text{Cl}(X_\Sigma)$.

Let $\mathbb{C}[x_0, \ldots, x_n]$ denote the polynomial ring corresponding to $\mathbb{CP}[q_0, \ldots, q_n]$, where each $x_i$ has degree $q_i$. A polynomial $F$ in $\mathbb{C}[x_0, \ldots, x_n]$ has degree $d$ if each monomial $x^\alpha$ appearing in $F$ satisfies $\alpha \cdot (q_0, \ldots, q_n) = d$. Accordingly, a polynonial $F$ of degree $d$ corresponds to a global section of the sheaf corresponding to $d \in \mathbb{Z} \simeq \text{Cl}(X_\Sigma)$, so that the hypersurface $\{F = 0\}$ in $\mathbb{CP}[q_0, \ldots, q_n]$ has normal sheaf corresponding to $d \in \mathbb{Z} \simeq \text{Cl}(X_\Sigma)$. 

Let $F_1, \ldots, F_s$ be homogeneous polynomials in $\mathbb{C}[x_0, \ldots, x_n]$ of degrees $d_1, \ldots, d_s$ respectively. Then the subset of weighted projective space given by $Y = \{F_1 = \cdots = F_s = 0\}$ is a complete intersection subvariety. If $Y$ has at most quotient singularities, then $Y$ is a complex orbifold. Remarks in the previous paragraph imply that the top power of the normal sheaf of $Y$ is the sheaf corresponding to $d_1 + \cdots + d_s \in \mathbb{Z} \simeq \text{Cl}(X_\Sigma)$.  Because $Y$ and $X$ are Cohen-Macaulay, adjunction still holds, which gives that 
\[
\omega_Y = \omega_X(d_1 + \cdots + d_s).
\]
We find that $\omega_Y$ is trivial if and only if 
\begin{align*}\label{eqn:c_1=0}
d_1 + \cdots + d_s = q_0 + \cdots + q_n
\end{align*}
which is equivalent to $c_1(Y) = 0$ as an integral cohomology class. 
\end{example}

\begin{example}
We follow a construction found in \cite{cr11} to give finite quotients of Calabi-Yau hypersurfaces in weighted projective spaces, which allows us to to realize the elliptic orbifolds (ii) through (iv) as finite quotients of cubic curves in $\mathbb{CP}^2$ (c.f. \cite{ks11}). A polynomial $F$ in $(n+1)$ variables is called quasi-homogeneous if there is a $n$-tuple of weights $(c_0, \ldots, c_n)$ such that for any scalar $\lambda$ we have 
\[
F(\lambda^{c_0}x_0, \ldots, \lambda^{c_n}x_n) = \lambda F(x_1, \ldots, x_n).
\]
We assume that $F$ is non-degenerate in the sense that $F$ defines an isolated singularity at the origin, and we also assume that $F$ is of Calabi-Yau type meaning $\sum_{i} c_i = 1$. Then the equation $F = 0$ defines a Calabi-Yau hypersurface $X_F$  in the weighted projective space $\mathbb{CP}[q_0, \ldots, q_n]$ where $q_i = c_i/d$ for some common denominator $d$.  (For example, if $F = x^2y + y^3 + xz^2$, then $F$ is quasihomogeneous with weights $(1/3, 1/3, 1/3)$, and hence $F$ defines a Calabi-Yau hypersurface in $\mathbb{CP}^2 = \mathbb{CP}[1,1,1]$.) Let $G_{\text{max}}$ denote the diagonal symmetry group 
\[
G_{\text{max}} = \{\text{Diag}(\lambda_0, \ldots, \lambda_N) : F(\lambda_0x_0, \ldots, \lambda_n x_0) = F(x_0, \ldots, x_n)\}.
\]
This group contains the element $J = \text{Diag}(e^{2\pi i c_0}, \ldots, e^{2\pi i c_n})$ which acts trivially on $X_F$. A subgroup $G$ satisfying $J \in G \subset G_{\text{max}}$ acts on $X_F$ with kernel $\langle J \rangle$. Hence the group $\tilde{G} = G/\langle J \rangle$ acts faithfully on $X_F$, and one obtains an orbifold global quotient $\mathcal{X} = [X_F/ \tilde{G}]$. This orbifold global quotient has $c_1(\mathcal{X}) = 0$ as a real cohomology class. In particular, the canonical sheaf $\omega_{\mathcal{X}}$ is not necessarily trivial, but some power of it is, so that some multiple of $c_1(\mathcal{X})$ vanishes as an integral cohomology class.

One can realize the elliptic orbifolds from Example \ref{ex:eo} as such quotients of cubic curves in $\mathbb{CP}^2$ (c.f. \cite{ks11}): 
\begin{enumerate}
\item[(ii)]  Consider the curve $X_F \subset \mathbb{CP}^2$ determined by the cubic polynomial $F = x^2y + y^3 + xz^2$. The group $\tilde{G} = \mathbb{Z}_4$ acts on $X_F$ via 
\[
\xi \cdot [x,y,z] = [\xi^2 x, y, \xi z]
\]
so that the quotient orbifold $[X_F/\tilde{G}]$ is the elliptic orbifold $\mathbb{P}_{4,4,2}$. Indeed the points whose stabilizers have orders $4, 4$ and $2$ are the images of the points  $[1,0,0]$, $[0,0,1]$, and $[1, \sqrt{-1}, 0]$. 
\item[(iii)] Consider the curve $X_F \subset \mathbb{CP}^2$ determined by the cubic polynomial $F = x^3 + y^3 + xz^2$. The group $\tilde{G} = \mathbb{Z}_6$ acts on $X_F$ via 
\[
\xi \cdot[x,y,z] = [\xi^4 x, y, \xi z]
\]
so that the quotient orbifold is the elliptic orbifold $\mathbb{P}_{6,3,2}$. Indeed, the points whose stabilizers have orders 6, 3, and 2 are the images of the points $[0,0,1], [1,0, \sqrt{-1}]$, and $[x, -1, 0]$ where $x$ is a third root of unity. 
\item[(iv)] Consider the curve $X_F \subset \mathbb{CP}^2$ determined by the cubic polynomial $F = x^3 + y^3 + z^3$. The group $\tilde{G} = \mathbb{Z}_3 \times \mathbb{Z}_3$ acts on $X_F$ via 
\[
(\xi_1, \xi_2) \cdot [x,y,z] = [x, \xi_1 y, \xi_2 z]
\] 
so that the quotient orbifold $[X_F/ \tilde{G}]$ is the elliptic orbifold $\mathbb{P}_{3,3,3}$. Indeed the points with stabilizers of size three are the images of the points $[-1, 0, z], [0,-1, z]$ and $[-1, y, 0]$ where $y,z$ denote third roots of unity. 
\end{enumerate}

In the general case, to each finite quotient Calabi-Yau orbifold $[X_F/\tilde{G}]$, there is an associated Berglund-Hubsch-Krawiz mirror $[X_F^T/\tilde{G}^T]$, which is another Calabi-Yau orbifold that is dual to the other one in the sense that there are symmetric isomorphisms at the level of certain cohomological groups, namely Chen-Ruan orbifold cohomology (see \cite{bh93} for the proposal of this ``classical mirror symmetry conjecture'' and \cite{cr11} for the proof.)
\end{example}

\begin{example}
One can also consider Calabi-Yau hypersurfaces in toric varieties defined by polyhedra, and in the case that the defining polyhedron is reflexive, there is a so-called Batyrev mirror, which is another Calabi-Yau hypersurface dual to the original hypersurface in the sense of mirror symmetry from mathematical physics \cite{b94}. 

More precisely, let $N$ be a lattice, and $M$ the corresponding dual lattice. A full-dimensional integral polytope $\Delta$ in $M$ is reflexive if 
\begin{enumerate}
\item[(i)] $0$ belongs to the interior of $\Delta$
\item[(ii)] there are vectors $v_F \in N$ associated to each codimension-1 face $F$ of $\Delta$ such that 
\[
\Delta = \{m \in M_{\mathbb{R}} : \langle m, v_F \rangle \geqslant 1 \; \text{for each $F$}\}.
\]
\end{enumerate}
Such a polytope determines a toric variety $X_\Delta$ which is Gorenstein and Fano. The anticanonical sheaf corresponds to the divisor $\sum_\rho D_\rho$ as $\rho$ ranges over the one-dimensional cones in the normal fan $\Sigma_\Delta$. By adjunction, the zero locus of a generic section of the anti-canonical sheaf determines a hypersurface $\mathcal{X}$ with trivial canonical sheaf so that $c_1(\mathcal{X})$ is zero as an integral cohomology class. Let $\mathcal{F}(\Delta)$ denote this family of Calabi-Yau hypersurfaces. 

This family of Calabi-Yau hypersurfaces $\mathcal{F}(\Delta)$ is dual to another family of Calabi-Yau hypersurfaces in the following sense. The polar dual of $\Delta$ is given by 
\[
\Delta^\circ = \{n \in N_{\mathbb{R}} : \langle m , n \rangle \geqslant -1 \; \text{for all $m \in \Delta$}\}
\]
and is reflexive if and only if $\Delta$ is. It follows that $\Delta^\circ$ also determines a Gorenstein toric Fano variety $X_{\Delta^\circ}$. In this way, one obtains a dual family $\mathcal{F}(\Delta^\circ)$ of Calabi-Yau hypersurfaces determined by generic sections of the anticanonical sheaf of $X_{\Delta^\circ}$. The involution taking $\mathcal{F}(\Delta)$ to $\mathcal{F}(\Delta^\circ)$ satisfies properties of the mirror duality in physics (see \cite{b94}). 

\end{example}

\begin{example}
One can generalize the previous example to complete intersection subvarieties of toric varieties and their mirrors \cite{bb94}. 

Let $X_\Delta$ be a Gorenstein toric Fano variety determined by a reflexive full dimensional integral polytope $\Delta$, and let $\Sigma_\Delta$ denote the corresponding normal fan. Let $E = \{e_1, \ldots, e_r\}$ denote the set of vertices of $\Delta$. A representation $E  = E_1 \cup \cdots \cup E_s$ as the disjoint union of subsets $E_1, \ldots, E_s$ is called a nef-partition if there are integral convex $\Sigma_\Delta$-piecewise linear functions $\varphi_1, \ldots, \varphi_s$ on $M_{\mathbb{R}}$ satisfying 
\[
\varphi_i(e_j) = \begin{cases}
1 & e_j \in E_i \\
0 & \text{otherwise}
\end{cases}.
\]
Such a partition induces a representation of the anticanonical divisor as the sum of $s$ Cartier divisors $\sum_{i=1}^s D_i$ which are nef. A  choice of an $s$-tuple of generic sections of the sheaves corresponding to these divisors gives rise to a complete intersection subvariety $\mathcal{X}$ of $X_\Delta$ which has trivial canonical sheaf and hence $c_1(\mathcal{X})$ is zero as an integral cohomology class. 

There is a duality on such complete intersections which can be described as follows. Let $E = E_1 \cup \cdots \cup E_s$ be a nef partition of the vertices of $\Delta$. For each $i$, let $\Delta_i'$ denote the convex polyhedron 
\[
\Delta_i' = \{n \in N_{\mathbb{R}} : \langle m, n \rangle \geqslant - \varphi_i(x) \}.
\]
Then it can be shown that the lattice polyhedron $\Delta'$ defined by  
\[
\Delta' = \text{Conv}(\Delta_1' \cup \cdots \cup \Delta_s').
\]
is reflexive \cite{b93}. Let $E'$ denote the collection of vertices of $\Delta'$. For each $i$, if $E'_i$ denotes the collection of vertices of $\Delta_i'$, then $E' = E_1' \cup \cdots \cup E_s'$ is a nef-partition of $E'$. In this way, the set of reflexive polyhedra together with nef-partitions enjoys a natural involution 
\[
(\Delta; E_1, \ldots, E_s) \mapsto (\Delta'; E_1', \ldots, E_s').
\]
It is shown in \cite{bb94} that this involution gives rise to mirror symmetric families of Calabi-Yau complete intersections in Gorenstein toric Fano varieties. 
\end{example}

\section*{Acknowledgements}

The author would like to thank Eleanora Di Nezza, Hans-Joachim Hein, Claude LeBrun, Daniel Litt, Chiu-Chu Melissa Liu, Jian Song, Gabor Szekelyhidi, and Valentino Tosatti for helpful discussions and suggestions. The author is also grateful to Chiu-Chu Melissa Liu and D.H. Phong for encouragement and support. The author is supported by NSF grant DGE 16-44869.

\bibliography{CalabiOrbifoldBib}
\bibliographystyle{plain}

\end{document}